\definecolor{darkergreen}{rgb}{0.0, 0.5, 0.0}
\numberwithin{equation}{section}
\def\theequation{\arabic{section}.\arabic{equation}}
\newcommand{\be}{\begin{eqnarray}}
	\newcommand{\ee}{\end{eqnarray}}
\newcommand{\ce}{\begin{eqnarray*}}
	\newcommand{\de}{\end{eqnarray*}}
\newtheorem{theorem}{Theorem}[section]
\newtheorem{lemma}[theorem]{Lemma}
\newtheorem{proposition}[theorem]{Proposition}
\newtheorem{Examples}[theorem]{Example}
\newtheorem{corollary}[theorem]{Corollary}
\newtheorem{definition}[theorem]{Definition}
\theoremstyle{definition}
\newtheorem{remark}[theorem]{Remark}
\def\${|\!|\!|}
\DeclareMathOperator{\supp}{supp}
\def\eps{\varepsilon}
\def\<{{\langle}}
\def\>{{\rangle}}
\def\({{\Big(}}
\def\){{\Big)}}
\def\bx{{\mathbf{x}}}
\def\tr{\mathrm {tr}}
\def\dif{{\mathord{{\rm d}}}}
\def\min{{\mathord{{\rm min}}}}
\def\={&\!\!=\!\!&}
\def\cT{{\mathcal T}}
\def\mN{{\mathbb N}}
\def\mR{{\mathbb R}}
\def\mT{{\mathbb T}}
\def\bP{{\mathbf P}}
\def\1{{\mathbf{1}}}
\def\E{\mathbf E}
\def\geq{\geqslant}
\def\leq{\leqslant}
\def\div{\mathord{{\rm div}}}
\def\eps{\varepsilon}
\def\<{{\langle}}
\def\>{{\rangle}}
\def\({{\Big(}}
\def\){{\Big)}}
\def\bx{{\mathbf{x}}}
\def\tr{\mathrm {Tr}}
\def\dif{{\mathord{{\rm d}}}}
\def\min{{\mathord{{\rm min}}}}
\def\={&\!\!=\!\!&}
\def\bt{\begin{theorem}}
	\def\et{\end{theorem}}
\def\bl{\begin{lemma}}
	\def\el{\end{lemma}}
\def\br{\begin{remark}}
	\def\er{\end{remark}}
\def\bx{\begin{Examples}}
	\def\ex{\end{Examples}}
\def\bd{\begin{definition}}
	\def\ed{\end{definition}}
\def\bp{\begin{proposition}}
	\def\ep{\end{proposition}}
\def\bc{\begin{corollary}}
	\def\ec{\end{corollary}}
\def\geq{\geqslant}
\def\leq{\leqslant}
\def\div{\mathord{{\rm div}}}
\def\Id{\textrm{Id}}
\def\bP{{\mathbf P}}
 \def\R{\mathbb R}
 \def\R{\mathbb R}    
\def\N{\mathbb N}  
\def\<{\langle} \def\>{\rangle}
\newcommand{\Rmnum}[1]{\expandafter\@slowromancap\romannumeral #1@}
\begin{document}
	\fontsize{10.0pt}{\baselineskip}\selectfont
	
	\title{Stationary solutions to stochastic 3D Euler equations in H\"{o}lder space}
	
	\author{Lin L\"{u}}  
	\address[L. L\"{u}]{Department of Mathematics, Beijing Institute of Technology, Beijing 100081, China}
	\email{3120235976@bit.edu.cn}
	
	\author{Rongchan Zhu}
	\address[R. Zhu]{Department of Mathematics, Beijing Institute of Technology, Beijing 100081, China}
	\email{zhurongchan@126.com}
	
	\thanks{ Research supported by National Key R\&D Program of China (No. 2022YFA1006300) and the NSFC (No.   12271030). The financial support by the DFG through the CRC 1283 ”Taming uncertainty and profiting from randomness and low regularity in analysis, stochastics and their applications” is greatly acknowledged.}
	
	\begin{abstract}
		We establish the existence of infinitely many global and stationary solutions in $C(\R,C^{\vartheta})$ space for some $\vartheta>0$ to the three-dimensional Euler equations driven by an additive stochastic forcing. The result is based on a new stochastic version of the convex integration method, incorporating the stochastic convex integration method developed in \cite{HZZ22b} and pathwise estimates to derive uniform moment estimates independent of time.
	\end{abstract}
	
	\subjclass[2010]{60H15; 35R60; 35Q30}
	\keywords{stochastic Euler equations, stationary solutions, H\"{o}lder space, convex integration. }
	
	\maketitle
	\tableofcontents
	\section{Introduction}
	In this paper, we consider the stochastic Euler equations governing the time evolution of the velocity $u$ of an inviscid fluid on the three-dimensional torus $\mathbb{T}^3=\mathbb{R}^3/(2\pi \mathbb{Z})^3$. The system reads as
	\begin{equation}\label{eul1}
		\aligned
		\dif u+\div(u\otimes u)\,\dif t+\nabla P\,\dif t&=\dif B,
		\\
		\div u&=0,
		\endaligned
	\end{equation}
	where $P$ stands for the corresponding pressure. The right hand side represents a random external force acting on the fluid, which is given by a $GG^*$-Wiener process $B$ defined on some probability space $(\Omega,\mathcal{F},(\mathcal{F}_{t})_{t\in\mR},\mathbf{P})$ with $GG^*$ being trace class operator. 	The notion of solution to \eqref{eul1} we use throughout this paper is that of  analytically weak solutions, here recalled.
	\begin{definition}\label{d:sol}
		We say that $((\Omega,\mathcal{F},(\mathcal{F}_{t})_{t\in\mR},\mathbf{P}),u,B)$ is an analytically weak solution to the stochastic Euler system \eqref{eul1} provided
		\begin{enumerate}
			\item $(\Omega,\mathcal{F},(\mathcal{F}_{t})_{t\in\mR},\mathbf{P})$ is a stochastic basis with a complete right-continuous filtration;
			\item $B$ is an $\R^{3}$-valued, mean zero and divergence-free, two-sided $GG^*$-Wiener process with respect to the  filtration $(\mathcal{F}_{t})_{t\in\mR}$;
			\item the velocity $u \in  C(\mR \times \mathbb{T}^{3})$ $\mathbf{P}$-a.s. and is $(\mathcal{F}_{t})_{t\in\mR}$-adapted;
			\item for every $-\infty<s\leq t<\infty$ it holds  $\mathbf{P}$-a.s.
			$$
			\begin{aligned}
				&\langle u(t),\psi \rangle  =\langle u(s),\psi \rangle + \int_{s}^{t} \langle   u,u \cdot \nabla \psi \rangle  \dif r +\langle B(t)-B(s),\psi\rangle
			\end{aligned}
			$$
			for all $\psi\in C^{\infty}(\mathbb{T}^{3},\mR^3)$, $\div \psi=0$.
			
		\end{enumerate}
	\end{definition}
	
	Stochastic Euler equations have been extensively studied in both two and three dimensions in previous works, including \cite{BFM16, BP01, CFH19, GV!4}, where local-in-time solutions for \eqref{eul1} have been constructed. Our objective is to investigate the existence of global and stationary solutions in the sense of Definition~\ref{d:sol} with H\"{o}lder continuity to the above system \eqref{eul1}. To achieve this goal, we employ the method of convex integration, a technique that has gained prominence in fluid dynamics over the past decades. This method was initially introduced following \cite{DLS10, DLS12, DelSze13}  by De Lellis and Sz\'ekelyhidi Jr. In these works, the convex integration method was used to prove the non-uniqueness of weak solutions with H\"{o}lder continuity to Euler equations. Subsequently, this method has led to several breakthroughs in fluid dynamics, including proof of Onsager’s conjecture for the incompressible Euler equations \cite{Ise18,BDLSV19}. We refer to the excellent review article by Buckmaster and Vicol \cite{BV19} for a gentle introduction and further references.
	
	In recent years, convex integration has already been applied in a stochastic setting, namely, to the isentropic Euler system by Breit, Feireisl and Hofmanov\'a \cite{BFH20} and to the full Euler system by Chiodaroli, Feireisl and Flandoli \cite{CFF19} with linear multiplicative noise. Furthermore, the ill/well-posedness of dissipative martingale solutions to stochastic 3D Euler equations was studied by Hofmanov\'a, Zhu and Zhu \cite{HZZ22a}, along with the existence and non-uniqueness of strong Markov solutions. Later, H\"{o}lder continuous, global-in-time probabilistically strong solutions to 3D Euler equations perturbed by Stratonovich transport noise was constructed in \cite{HLP22}. In the most recent work \cite{HZZ22b}, the existence and non-uniqueness of stationary solutions in $H^\vartheta$ to the stochastic 3D Euler equations was established, using a new stochastic convex integration method.
	
	However, the regularity of the global and stationary solutions to the stochastic 3D Euler equations with additive noise, as constructed in the previous work, cannot achieve H\"{o}lder continuity. Inspired by these previous works and the progress in the deterministic case, it is natural to investigate the existence and non-uniqueness of global and stationary Hölder continuous solutions to the Euler equations in the stochastic setting \eqref{eul1}. Another motivation for our research is linked to the Onsager's conjecture. It is constructed in \cite{Ise18, BDLSV19} that solutions in H\"{o}lder space $C^{\frac13-}$ to the deterministic Euler equations do not conserve energy. It is natural to ask whether the global and nonunique solutions in the stochastic setting can stay $C^{\frac13-}$ almost surely.
	
	To summarize the above discussion, we investigate the validity of the following claims.
	\begin{enumerate}
		\item [(i)] Existence and non-uniqueness of global $\vartheta-$H\"{o}lder continuous solutions to the stochastic Euler equations \eqref{eul1}, for some $\vartheta>0$.
		\item [(ii)]  Existence and non-uniqueness of stationary solutions to the stochastic Euler equations \eqref{eul1}.
	\end{enumerate}

	Our objective is to address the aforementioned issues (i) and (ii) within the context of the stochastic Euler equations on $\mathbb{T}^3$ driven by additive stochastic noise. We employ a stochastic version of the convex integration method as the primary tool in this study. Unlike previous works applying convex integration for the Navier--Stokes and Euler equations in the stochastic setting \cite{HZZ22a, HZZ21, HZZ21markov, HZZ19},  our approach is inspired by \cite{CDZ22} and \cite{HZZ22b}. In these works, the limitations stemming from stopping times, which were previously used to control the noise terms in the iteration, are overcome. To be more specific, we have moved away from working with stopping times. Instead, we incorporate expectations into the inductive iteration and provide uniform moment estimates. However, the construction of H\"{o}lder continuous Euler flows in the deterministic setting, as demonstrated in \cite{BV19}, requires to introduce transport equations, and pathwise positive lower bounds for the solutions are used to improve regularity. As a result, we also introduce some pathwise estimates during iteration for convex integration and use the cutoff technique to control the growth of noise pathwisely. Compared to only using stochastic convex integration, the combination of pathwise estimates with it can provide better H\"{o}lder regularity of the solutions (see Remark~\ref{remark} below). This approach also allows us not to control all the moment bounds during the iteration. We hope this cutoff method can be applied to enhance the regularity of the solutions in future work.
		\subsection{Main results}\label{main}
	Our first main result reads as follows and is proved in Theorem~\ref{v1}, tackling the problem (i) above.
	\bt\label{Thm1}
	Suppose that $\tr((-\Delta)^{\frac32+\sigma}GG^*)<\infty$ for some $\sigma>0$, then there exists an analytically weak solution to \eqref{eul1} in the sense of Definition~\ref{d:sol} with $(\mathcal{F}_{t})_{t\in\mR}$ being the normal filtration generated by the Wiener process $B$, which belongs to $C(\R,C^\vartheta)$ for some $\vartheta\in \left( 0,\min{\{ \frac{\sigma}{120{\cdot}7^5},\frac{1}{3{\cdot}7^5}\}}\right) $.There are infinitely many such solutions $u$. 
	\et
	\begin{remark}
	In \cite{HZZ22b}, the authors have constructed solutions to \eqref{eul1} with continuous paths in a suitable Sobolev space, namely $C(\mR, H^{\vartheta})$ $\mathbf{P}$-almost surely.  In contrast, we are able to construct solutions achieving the  H\"{o}lder continuity. Additionally, the stochastic convex integration developed in \cite{HZZ22b} requires estimating all the moments of the solutions, while our analysis only necessitates specifying an iteration procedure for a finite number of moments, along with additional pathwise bounds.
	
	\end{remark}
	\begin{remark}\label{remark}
		We can also establish Theorem~\ref{Thm1} in a separate article (see \cite{LZ24}), which sorely employs the stochastic convex integration method to provide uniform moment bounds. In other words, we do not incorporate pathwise estimates (see \eqref{vqaa}, \eqref{vqcc}, \eqref{vqdd} below) during the iteration for convex integration. In that paper,  the construction of building blocks of the convex integration method follows from \cite{DelSze13}, and we do not require pathwise lower bounds of the solutions to the transport equation \eqref{eq:te} below. Therefore, the stochastic convolution $z$ is not cut off. However, the value of $\vartheta$ in this case decreases due to the arbitrarily higher moments required to be estimated, and we may take $\vartheta<\min{\{\frac{\sigma}{73{\cdot}62^3},\frac{1}{7{\cdot}62^3}\}}$.
       
	\end{remark}
	
	Our second result establishes the existence and non-uniqueness of stationary solutions in the H\"{o}lder space. Here, stationarity is interpreted in terms of the shift invariance of laws governing solutions on the space of trajectories, as discussed in \cite{BFHM19, BFH20e, FFH21, HZZ22b}. To be more specific, we define the joint trajectory space for the solution and the driving Wiener process as follows:
	\begin{align*}
		\cT = C(\mR,C^{\kappa})\times C(\mR,C^{\kappa})
	\end{align*}
	for some $\kappa>0$. And let $S_t$, $t\in\mR$, be  shifts on trajectories given by
	\begin{align*}
		S_t(u,B)(\cdot)=(u(\cdot+t),B(\cdot+t)-B(t)),\quad t\in\mR,\quad (u,B)\in\cT.
	\end{align*}
	We observe that the shift in the second component operates differently to ensure that for a Wiener process $B$, the shift $S_{t}B$ remains a Wiener process.
	
	\begin{definition}\label{d:1.1}
		We say that  $((\Omega,\mathcal{F},(\mathcal{F}_{t})_{t\in\mR},\mathbf{P}),u,B)$ is a stationary solution to the stochastic Euler equations \eqref{eul1} provided it satisfies  \eqref{eul1} in the sense of Definition~\ref{d:sol}  and its law is shift invariant, that is,
		$$\mathcal{L}[S_{t}(u,B)]=\mathcal{L}[u,B]\qquad\text{ for all }\quad t\in\mR.$$
	\end{definition}
	
	With these definitions at hand, our second main result then reads as follows and is proved in Theorem~\ref{th:s1}, tackling the problem (ii) above.

	\bt\label{th:main}
	Suppose that $\tr((-\Delta)^{\frac32+\sigma}GG^*)<\infty$ for some $\sigma>0$, then there exist infinitely many stationary solutions
	to the stochastic Euler equations \eqref{eul1}. Moreover, the solutions belong to $C(\mR,C^{\vartheta})$ a.s. for some $\vartheta>0$, satisfying
	$$\sup_{t\in\R}\mathbf{E}\left[\sup_{t\leq s \leq t+1}\|u(s)\|_{C^{\vartheta}}\right]<\infty. $$
	\et
	\subsection*{Organization of the paper}Section~\ref{notation} compiles the fundamental notations used throughout the paper. Section~\ref{ci2}, \ref{convex}, \ref{s:it1} constitute the core of our proofs, where stochastic convex integration is developed and employed to construct entire analytically weak solutions. In Section~\ref{ci2}, we present the main iteration Proposition~\ref{p:iteration1} to prove our main Theorem~\ref{v1}. Section~\ref{convex} and Section~\ref{s:it1} are devoted to the construction of stochastic convex integration with pathwise estimates, namely, proof of iteration Proposition~\ref{p:iteration1}. 
	Then the results from Section~\ref{ci2} are applied in Section~\ref{s:4}, coupled with a Krylov--Bogoliubov's argument, to establish the existence of non-unique stationary solutions to the stochastic Euler equations. 
	In Appendix~\ref{Bw2}, we recall the construction and property of Beltrami waves from \cite{BV19} . In Appendix \ref{ap:A'}, we estimate amplitude functions used in the convex integration construction. Finally, the estimates for transport equations are completed in Appendix~\ref{s:B}.

	\section{Preliminaries}\label{notation}
	In the sequel, we use the notation $a\lesssim b$ if there exists a constant $c>0$ such that $a\leq cb$. 
	\subsection{Function spaces}\label{s:2.1}
	Given a Banach space $X$ with a norm $\|\cdot\|_X$ and $t\in\mR$, we denote $C_tX:=C([t,t+1],X)$ as the space of continuous functions from $[t,t+1]$ to $X$, equipped with the supremum norm 
	$$\|f\|_{C_tX}:=\sup_{s\in[t,t+1]}\|f(s)\|_{X}.$$  
	For $\alpha\in(0,1)$, we use $C^\alpha_tX$ to denote the space of $\alpha$-H\"{o}lder continuous functions from $[t,t+1]$ to $X$, endowed with the norm $$\|f\|_{C^\alpha_tX}:=\sup_{s,r\in [t,t+1],s\neq r}\frac{\|f(s)-f(r)\|_X}{|r-s|^\alpha}+\|f\|_{C_tX}.$$ 
	We also write $C^0 :=C(\mathbb{T}^3,\mR^3)$ equipped with the supremum norm $\|f\|_{C^0}:=\sup_{x\in \mathbb{T}^3}|f(x)|$.
	For $\alpha \in (0,1)$, we write $C^\alpha:=C^\alpha(\mathbb{T}^3,\mR^3)$ as the space of $\alpha$-H\"{o}lder continuous functions from $\mathbb{T}^3$ to $\mR^3$ endowed with the norm 
	$$\|f\|_{C^\alpha}:=\sup_{x,y\in \mathbb{T}^3,x\neq y}\frac{|f(x)-f(y)|}{|x-y|^\alpha}+\sup_{x\in \mathbb{T}^3}|f(x)|.$$
   We denote $L^p$ as the set of  standard $L^p$-integrable functions from $\mathbb{T}^3$ to $\mathbb{R}^3$. For $s>0$, $p>1$ the Sobolev space $W^{s,p}:=\{f\in L^p; \|f\|_{W^{s,p}}:= \|(I-\Delta)^{{s}/{2}}f\|_{L^p}<\infty\}$. We set $L^{2}_{\sigma}:=\{f\in L^2; \int_{\mathbb{T}^{3}} f\,\dif x=0,\div f=0\}$. For $s>0$, we also denote $H^s:=W^{s,2}\cap L^2_\sigma$. 
	For $t\in\mathbb{R}$ and a domain $D\subset\R$ and $N\in\N_{0}:=\mN\cup \{0\}$, we denote the space of $C^{N}$-functions from $[t,t+1]\times\mathbb{T}^{3}$ and $D\times\mathbb{T}^{3}$ to $\mR^3$, respectively, by  $C^{N}_{t,x}$ and $C^{N}_{D,x}$. The spaces are equipped with the norms
	$$
	\|f\|_{C^N_{t,x}}=\sum_{\substack{0\leq n+|\alpha|\leq N\\ n\in\N_{0},\alpha\in\N^{3}_{0} }}\|\partial_t^n D^\alpha f\|_{L^\infty_{[t,t+1]} L^\infty},\qquad \|f\|_{C^N_{D,x}}=\sum_{\substack{0\leq n+|\alpha|\leq N\\ n\in\N_{0},\alpha\in\N^{3}_{0} }}\sup_{t\in D}\|\partial_t^n D^\alpha f\|_{ L^\infty}.
	$$
	We also use $\mathring{\otimes}$ to denote the trace-free part of the tensor product. For a tensor $T$, we denote its traceless part by $\mathring{T}:=T-\frac13\tr(T)\rm{Id}$.
	
	We recall  \cite[Definition 4.2]{DelSze13} the inverse divergence operator $\mathcal{R}$ which acts on vector fields $v$ with $\int_{\mathbb{T}^3}v\dif x=0$ as
	\begin{equation*}
		(\mathcal{R}v)^{kl}=(\partial_k\Delta^{-1}v^l+\partial_l\Delta^{-1}v^k)-\frac{1}{2}(\delta_{kl}+\partial_k\partial_l\Delta^{-1})\div\Delta^{-1}v,
	\end{equation*}
	for $k,l\in\{1,2,3\}$. The above inverse divergence operator has the property that  $\mathcal{R}v(x)$ is a symmetric trace-free matrix for each $x\in\mathbb{T}^3$, and $\mathcal{R}$ is a right inverse of the div operator, i.e. $\div(\mathcal{R} v)=v$. By \cite[Theorem B.3]{CL22} we have for $1\leq p\leq \infty$
	\begin{align}\label{eR}
		\|\mathcal{R}f\|_{L^p(\mathbb{T}^3)}\lesssim \|f\|_{L^p(\mathbb{T}^3)}.
	\end{align}
	
	\subsection{Probabilistic elements}\label{s:2.2}
	For a given probability measure $\mathbf{P}$,  we use $\mathbf{E}$ to denote the expectation under $\mathbf{P}$.
	Concerning the driving noise, we assume that $B$ is $\R^{3}$-valued two-sided  $GG^*$-Wiener process with mean zero and divergence-free. This process is defined on some probability space $(\Omega, \mathcal{F}, \mathbf{P})$, where $G$ is a Hilbert--Schmidt operator from $U$ to $L_{\sigma}^2$ for some Hilbert space $U$.
	
	Given a Banach space $X=C(\mathbb{T}^3,\mR^3)$ or $X=C^{\kappa}(\mathbb{T}^3,\mR^3)$ for some $\kappa>0$. For $p\in[1,\infty)$ and $\delta \in (0,\frac12)$, we denote
\begin{align*}
	\$u\$^p_{X,p}:=\sup_{t\in \mathbb{R}}\mathbf{E}\left[\sup_{s\in [t,t+1]}\|u(s)\|^p_X\right], \quad  \$u\$^p_{C_t^{\frac{1}{2}-\delta}X,p}:=\sup_{t\in \mR} \mathbf{E}\left[ \|u\|^p_{C_t^{\frac{1}{2}-\delta}X}\right].
\end{align*}
	The above norms denote function spaces of random variables on $\Omega$ taking values in $C(\mR,X)$ and $C^{\frac{1}{2}-\delta}(\mR,X)$, respectively,  with bounds in $L^{p}(\Omega;{C(I,X))}$ and $L^{p}(\Omega;{C^{\frac{1}{2}-\delta}(I,X)})$ for  
	bounded interval $I\subset\mR$. Importantly, the bounds solely depend on the length of the interval $I$ and are independent of its location within $\mR$. In addition, we denote the corresponding norms with $X$ replaced by $L^1$ and $H^{\frac32+\kappa}$ for some $\kappa>0$.

	\section{ Stochastic convex integration with pathwise estimates and results}\label{ci2}
	In \cite{CDZ22} and \cite{HZZ22b}, the authors presented a stochastic convex integration method to construct solutions on the entire timeline $\R$. They achieved this by introducing expectations to the iterative estimates in convex integration. In this section, we integrate their methods with pathwise estimates in the context of the stochastic Euler equations. Our objective is to construct H\"{o}lder continuous Euler flows, which are similar to the deterministic case as presented in \cite{BV19}. In that case, the transport equations (see \eqref{eq:te} below) are used to improve regularity. However, this method necessitates pathwise positive lower bounds for the solutions of the transport equations. Moment bounds during the inductive iteration become insufficient. Therefore, we introduce additional pathwise estimates in convex integration schemes. 
	To control the growth of the noise, we employ the cut-off technique to the stochastic convolution. This enables the introduction of pathwise estimates during the inductive iteration. We combine this with moment bounds to derive desired uniform estimates. 
	Compared to only using moment bounds in the inductive iteration in \cite{CDZ22} and \cite{HZZ22b}, the advantage of our approach is that it only requires lower moments of the solutions, and avoids the requirements for higher moments as in \cite{HZZ22b}. This can simplify the computations and enhance the regularity of the solutions.
	
	We intend to develop an iteration procedure leading to the proof of Theorem~\ref{Thm1}. 
	To this end, we decompose stochastic Euler system \eqref{eul1} into two parts, one is linear and involves the noise, whereas the second one is a random PDE. More precisely, we consider the stochastic linear equation
	\begin{equation}\label{li:sto}
		\aligned
		\dif z+z \dif t&=\dif B,
		\\\div z&=0,
		\endaligned
	\end{equation}
	where $B$ is $\mathbb{R}^3$-valued two-sided trace-class $GG^*$-Wiener process with divergence-free and mean zero, and $v$ solves the nonlinear equation
	\begin{equation}\label{nonlinear}
		\aligned
		\partial_t v-z+\div((v+z)\otimes (v+z))+\nabla P&=0,
		\\\div v&=0.
		\endaligned
	\end{equation}
	Here, $z$ is divergence-free by the assumptions on the noise $B$ and we denote the  pressure term associated with $v$ by $P$ .
	
	Furthermore, using the factorization method, it is standard to obtain the regularity of $z$  on a given stochastic basis $(\Omega, \mathcal{F},(\mathcal{F}_{t})_{t\in\mR},\mathbf{P})$ with $(\mathcal{F}_{t})_{t\in\mR}$ being normal filtration given in \cite[Section 2.1]{LR15}. Specifically, the following result follows from \cite[Theorem 5.16]{DPZ92} together with the Kolmogorov continuity criterion.
	\bp\label{eq:Pro1}
	Suppose that $\tr((-\Delta)^{\frac32+\kappa} GG^*)<\infty$ for some $\kappa>0$. Then for any   $\delta \in (0,\frac{1}{2})$, $p\geq 2$
	\begin{equation}\label{zh5/2}
		\sup_{t\in \mR} \mathbf{E}\left[ \|z\|^p_{C_t^{1/2-\delta}{C^\kappa}}\right]\lesssim \sup_{t\in \mR} \mathbf{E}\left[ \|z\|^p_{C_t^{1/2-\delta}H^{\frac32+\kappa}}\right]\leq (p-1)^{p/2}L^p,
	\end{equation}
	where $L\geq 1$ depends on $\tr((-\Delta)^{3/2+\kappa} GG^*)$, $\delta$ and is independent of $p$. 
	\ep
	\begin{proof}
		We recall that the unique stationary solution to \eqref{li:sto} has an explicit form $z(t)=\int_{-\infty}^t e^{s-t}\dif B_s$. According to \cite[Section~2.1]{LR15}, the Wiener process $B$ can be written as $B=\sum_{k\in\mN}\sqrt{c_k } \beta_k e_k$ for an orthonormal basis $\{e_k\}_{k\in\mN}$ of $L^{2}_{\sigma}$ consisting of eigenvectors of $GG^*$ with corresponding eigenvalues $c_k$, and the coefficients satisfy $\sum_{k\in\mN}c_k<\infty$. Here, $\{\beta_{k}\}_{k\in\mN}$ denotes a sequence of mutually independent standard two-sided real-valued Brownian motions. Then it holds for $t\geq s$,
		\begin{align*}
			\\ &\mathbf{E}\|z(t)-z(s)\|^2_{H^{\frac32+\kappa}}
			\\&=\E \left\| (I-\Delta)^{\frac12(\frac32+\kappa)} \int_{s}^{t}e^{r-t} \dif B_r\right\|_{L^2}^2+\E\left\|(I-\Delta)^{\frac12(\frac32+\kappa)} \int_{-\infty}^s (e^{r-t}-e^{r-s})\dif B_r\right\|^2_{L^2}
			\\&=\sum_{k=1}^{\infty}c_k\int_s^t\|(I-\Delta)^{\frac12(\frac32+\kappa)}e_k\|_{L^2}^2 e^{2(r-t)}\dif r 
			\\&\qquad+ \sum_{k=1}^{\infty}c_k\int_{-\infty}^s\|(I-\Delta)^{\frac12(\frac32+\kappa)}e_k\|_{L^2}^2  (e^{r-t}-e^{r-s})^2 \dif r
			\\ &\leq \sum_{k=1}^{\infty}c_k\left(  \|e_k\|_{L^2}^2 + \|(-\Delta)^{\frac12(\frac32+\kappa)}e_k\|_{L^2}^2  \right)  \left( \int_s^t e^{2(r-t)}\dif r +\int_{-\infty}^s (e^{r-t}-e^{r-s})^2 \dif r\right) 
			\\&= (\mathrm{Tr}(GG^*)+\tr((-\Delta)^{3/2+\kappa}GG^*)) \left( \int_s^t e^{2(r-t)}\dif r +\int_{-\infty}^s (e^{r-t}-e^{r-s})^2 \dif r\right) 
			\\& \leq (\mathrm{Tr}(GG^*)+\tr((-\Delta)^{3/2+\kappa}GG^*)) (t-s).
		\end{align*}
		Using Gaussianity we have 
		\begin{align*}
			\mathbf{E}\|z(t)-z(s)\|^p_{H^{\frac32+\kappa}}\leq (p-1)^{\frac{p}{2}}\Big(\E\|z(t)-z(s)\|_{H^{\frac32+\kappa}}^{2}\Big)^{\frac{p}{2}}.
		\end{align*}
		By Kolmogorov’s continuity criterion and fundamental Sobolev embedding $H^{\frac{3}{2}+\kappa} \hookrightarrow C^{\kappa}$, we obtain     for any $\delta \in (0,\frac{1}{2})$ 
		\begin{align*}
			\sup_{t\in \mR} \mathbf{E}\left[ \|z\|^p_{C_t^{1/2-\delta}{C^\kappa}}\right] \lesssim	\sup_{t\in \mR} \mathbf{E}\left[ \|z\|^p_{C_t^{1/2-\delta}H^{\frac32+\kappa}}\right] \leq (p-1)^{p/2}L^p,
		\end{align*}
		where $L\geq 1$ depends on $\tr((-\Delta)^{\frac32+\kappa} GG^*)$, $\delta$ and is independent of $p$.
	\end{proof}
	
	\subsection{Stochastic convex integration set-up and iterative proposition}
	Now, we apply the convex integration method to the nonlinear equation \eqref{nonlinear}. The convex integration iteration is indexed by a parameter $q\in\mathbb{N}_0$. We consider an increasing sequence $\{\lambda_q  \}_{q\in \mathbb{N}_0}$ which diverges to $\infty$, and a bounded sequence $\{\delta_q  \}_{q\in \mathbb{N}}$  which decreases to $0$. We choose $a,b \in \mathbb{N}$ sufficiently large and $\beta \in (0,1)$ sufficiently small and let
	$$\lambda_q=a^{b^{q}}, \quad \delta_1=3rL^2, \quad  \delta_q= \frac12 \lambda_2^{2\beta}\lambda_{q}^{-2\beta}, \ q\geq 2.$$
	 More details on the choice of these parameters will be given in Section~\ref{311'} below. 
	
    At each step $q$, a pair $(v_q ,\mathring{R}_q)$ is constructed to solve the following system
	\begin{equation}\label{euler1}
		\aligned
		\partial_t v_q -z_q+\div((v_q+z_q)\otimes (v_q+z_q))+\nabla p_q&=\div
		\mathring{R}_q,
		\\\div v_q&=0.
		\endaligned
	\end{equation}
	In the above we define 
	\begin{align}\label{def z}
		z_q(t,x)=\chi_{q}\left(\|\tilde{z}_q(t)\|_{C^1_x}\right)\tilde{z}_q(t,x),
	\end{align}
	where $\chi_q$ is a non-increasing smooth function satisfying
	$$\chi_q(x)=
	\begin{cases}
		1, & x\in[0,\lambda_{q+2}^{\gamma}],\\
		0, & x\in(\lambda_{q+3}^{\frac{1}{4}\gamma},\infty).
	\end{cases}
	$$
	Moreover, we suppose $\chi_{q}$ with derivative bounded by $1$. This is achieved by choosing $a$ sufficiently large such that
	\begin{align}\label{zqcut}
		\frac{1}{\lambda_{q+3}^{\gamma/4}-\lambda_{q+2}^\gamma}\ll1.
	\end{align}
	 In addition, the $\tilde{z}_q$ in \eqref{def z} is given by $\tilde{z}_q=\mathbb{P}_{\leq f(q)}z$ with
	$f(q)=\lambda_{q+1}^{\gamma/8}$. Here $\mathbb{P}_{\leq f(q)}$ is the Fourier multiplier operator, which projects a
	function onto its Fourier frequencies $\leq f(q)$ in absolute value, and $\gamma$ is another small parameter which will be fixed in Section~\ref{311'}. 
	$\mathring{R}_q$ on the right hand of \eqref{euler1} is a $3\times 3$ matrix which is trace-free and we put the trace part into the pressure. 
	Keeping \eqref{zh5/2} in mind, by the Sobolev embedding $\|f\|_{L^\infty}\lesssim \|f\|_{H^{\frac32+\kappa}}$ for $\kappa>0$, we observe that for any 
	$p\geq2$ and $\delta \in (0,\frac12)$
	\begin{equation}\label{estimate-zq}
		\aligned
		\$z_q\$_{C^0,p}&\lesssim \$\tilde{z}_q\$_{H^{\frac32+\kappa},p}\lesssim \$z\$_{H^{\frac32+\kappa},p} \lesssim (p-1)^{\frac12}L,
		\\ \$z_q\$_{C^1,p} &\lesssim  \$\tilde{z}_q\$_{C^1,p}\lesssim f(q)\$z\$_{H^{\frac32+\kappa},p} \lesssim (p-1)^{\frac12}L\lambda_{q+1}^{\frac{\gamma}{8}},
		\\ \$z_q\$_{C^{\frac12-\delta}_tC^0,p}&\lesssim \$\tilde{z}_q\$_{C^{\frac12-\delta}_tC^1_x,2p} \$\tilde{z}_q\$_{C^{\frac12-\delta}_tC^0,2p} 
		\lesssim \lambda_{q+1}^{\frac{\gamma}{8}}\$z\$_{C^{\frac12-\delta}_tH^{\frac32+\kappa},2p}^2 \lesssim (2p-1)L^2\lambda_{q+1}^{\frac{\gamma}{8}}.
		\endaligned
	\end{equation}
	The above estimates \eqref{estimate-zq} will be frequently used in the sequel.
	\begin{remark}[On the choice of cut-off]
In order to apply the stationary phase lemma (see Lemma~\ref{lemma1'} below) to derive the desired bound of $\mathring{R}_{q+1}$, it is necessary to have a $pathwise$ positive lower bound for the solution $\Phi_{k,j}$ of equation \eqref{eq:te} below (as shown in \eqref{Phijb}). This requires a pathwise bound of $C^1_x$-norm of $z_q$ part from the noise. Consequently, we truncate the $C^1_x$-norm of $\tilde{z}_q$ by a cut-off function depending on $q$. Although the pathwise $C^1_x$-norm for $z_q$  may blow up, we choose the parameter $\lambda_{q+3}^{\gamma/4}$ in the definition of $\chi_q$ to ensure it does not increase too fast. On the other hand, our technique requires absorbing the difference of $z_{q+1}$ and $z_q$, i.e. $\$z_{q+1}-z_q\$_{C^0,r}$ into the small parameter $\delta_{q+3}$ to derive the inductive estimates for $\mathring{R}_{q+1}$ (see \eqref{vqee} and \eqref{vqf} below). Therefore,  the choice of parameter $\lambda_{q+2}^{\gamma}$ is to bound $\$z_{q+1}-z_q\$_{C^0,r}$ by $\lambda_{q+2}^{-\gamma}$, which can be absorbed into $\delta_{q+3}$ by choosing suitable parameters as stated in Section~\ref{311'} below.
	\end{remark}
	
	Under the above assumptions, the key result is the following iterative proposition, and the proof of this result is presented in Section~\ref{convex} and Section~\ref{s:it1} below.
	\bp \label{p:iteration1} 
	Suppose that $\tr((-\Delta)^{\frac32+\sigma} GG^*)<\infty$ for some $\sigma>0$ and let $r>1$ be fixed. Given a smooth function $e:\R\to(0,\infty)$ such that $\bar e\geq e(t)\geq \underline{e}\geq 6{\cdot48} {\cdot(2\pi)^3}c_R^{-1}L^2$  with $\|e'\|_{C^{0}}\leq \tilde e$ for some constants $\overline{e},\underline{e},\tilde{e}>0$, where $L$ is from Proposition~\ref{eq:Pro1} and $c_R>0$ is a small universal constant (see \eqref{cR} below). There exists a choice of
	parameters $a,b,\beta$ and $\gamma$ such that the following holds true: Let $(v_q ,\mathring{R}_q)$ for some $q\in \mathbb{N}_0$ be an $(\mathcal{F}_t)_{t\geq 0}$-adapted solution to \eqref{euler1} satisfying
	\begin{subequations}\label{it:vq}
		\begin{align}
			\|v_q\|_{C_{t,x}^0} & \leq \lambda_{q+2}^{\gamma}\label{vqaa},
			\\ \$v_q\$_{C^0,2r} & \leq \lambda_q^{\beta}\label{vqbb},
			\\ \|v_q\|_{C_{t,x}^1} & \leq \lambda^{\frac{3}{2}}_q \delta_q^{\frac{1}{2}}, \label{vqcc}
			\\ \|\mathring{R}_q\|_{C^0_{t,x}} & \leq\lambda^{\gamma}_{q+3}\label{vqdd},
			\\ \$\mathring{R}_q\$_{C^0,r} & \leq \delta_{q+1}\label{vqee},
			\\\$\mathring{R}_q\$_{L^1,1}&\leq\frac{1}{48}c_R\delta_{q+2}\underline{e}.\label{vqf}
		\end{align}
	\end{subequations}
	Moreover, for any $t\in \mR$
	\begin{equation}\label{vqg}
		\aligned
		\frac34\delta_{q+1}e(t)\leq e(t)-\E\|(v_q+z_q)(t)\|_{L^2}^2\leq \frac54\delta_{q+1}e(t),\quad & q\geq1,
		\\ 	\E\|v_0(t)+z_0(t)\|^2_{L^2} \leq \frac12e(t), \quad &q=0. 
		\endaligned
	\end{equation}
	Then there exists  an $(\mathcal{F}_t)_{t\geq 0}$-adapted process $(v_{q+1}, \mathring{R}_{q+1})$ which solves \eqref{euler1}, obeys \eqref{vqaa}-\eqref{vqf} and \eqref{vqg} at the level $q+1$ and satisfies
	\begin{equation}\label{vq+1-vq}
		\$v_{q+1}-v_q\$_{C^0,2r}\leq \bar{M}\delta_{q+1}^\frac{1}{2},
	\end{equation}
	where $\bar{M}$ is a universal constant that will be fixed throughout the iteration.
	\ep
	
	We intend to start the iteration from $v_{0}\equiv 0$ on $\mR$. In this case, we have
	$\mathring{R}_0=z_{0}\mathring\otimes z_{0}-\mathcal{R}z_{0}$.
	Recalling the definition of $z_0$ in \eqref{def z}, we obtain
	\begin{align*}
		\|\mathring{R}_0\|_{C^0_{t,x}} \leq \|z_0\|^2_{C^0_{t,x}}+ \|z_0\|_{C^0_{t,x}} \leq \lambda_3^{\frac{1}{2}\gamma}+ \lambda_3^{\frac{1}{4}\gamma} \leq \lambda_3^{\gamma},
	\end{align*}
	where we chose $a$ sufficiently large to absorb the constant in the last inequality. Taking expectation and using \eqref{estimate-zq}, we obtain
	\begin{align*}
		\$\mathring{R}_0\$_{C^0,r}\leq \$z_0\$^2_{C^0,2r}+ \$z_0\$_{C^0,r} \leq \$z\$^2_{H^{\frac32+\sigma},2r}+ \$z\$_{H^{\frac32+\sigma},r} \leq 2rL^2+rL \leq \delta_1,
		\\ \$\mathring{R}_0\$_{L^1,1}\leq (2\pi)^3 \$\mathring{R}_0\$_{C^0,1}\leq 3{\cdot} (2\pi)^3 L^2\leq \frac{1}{96}c_R\underline{e}\leq \frac{1}{48}c_R\delta_2
		e(t),
	\end{align*}
	where we used $6{\cdot}48{ \cdot}(2\pi)^3c_R^{-1}L^2\leq \underline{e}$, $\delta_1=3rL^2$ and $\delta_2=\frac12$.
	Hence, \eqref{vqdd}, \eqref{vqee}  and \eqref{vqf} are satisfied at the level  $q=0$. Similarly, \eqref{vqg} also holds at the level $q=0$, since we have
	\begin{align*}
		\E\|z_0(t)\|^2_{L^2} \leq (2\pi)^3\E\|z_0(t)\|^2_{C^0}\leq (2\pi)^3\E\|z(t)\|^2_{H^{\frac32+\sigma}}\leq(2\pi)^3L^2\leq \frac{1}{2}e(t).
	\end{align*}
	We will see in the following context, as long as the $q=0$ step in \eqref{vqg} holds, $\zeta_0(t)$ in \eqref{zetaq} below is well-defined, thus the iteration can proceed.

	\subsection{Proof of main result}
	Since the first iteration is established, we deduce the following result by using Proposition \ref{p:iteration1}.
	\bt\label{v1}
	Suppose that $\tr((-\Delta)^{\frac32+\sigma} GG^*)<\infty$ for some $\sigma>0$. Let $r>1$ be fixed and given a smooth function $e:\R\to(0,\infty)$ such that $\bar e\geq e(t)\geq \underline{e}\geq 6{\cdot48} {\cdot(2\pi)^3}c_R^{-1}L^2$  with $\|e'\|_{C^{0}}\leq \tilde e$ for some constants $\overline{e},\underline{e},\tilde{e}>0$. There exists an $(\mathcal{F}_t)_{t\in\mR}$-adapted process $u$ which belongs to $C(\mR;C^{\vartheta})$ $\mathbf{P}$-a.s.  for some $\vartheta>0$  and is an analytically weak solution to \eqref{eul1} in the sense of Definition~\ref{d:sol}.  Moreover, the solution satisfies 
	\begin{align}\label{vtheta:11}\$u\$_{C^{\vartheta},2r}<\infty, \end{align} 
	and for all $t\in\mR$
	\begin{align}\label{eq:K2}
		\mathbf{E}\|u(t)\|_{L^2}^2=e(t).
	\end{align}
	There are infinitely many such solutions by choosing different energy functions $e$.
	\et
	
	\begin{proof}
		We start the iteration Proposition \ref{p:iteration1} with the pair $(v_0,\mathring{R}_0)$ and obtain a sequence of solutions $(v_q,\mathring{R}_q)$. By \eqref{vqcc}, \eqref{vq+1-vq} and interpolation we deduce for any $\vartheta\in \left(0,\min{\{\frac{\sigma}{120{\cdot}b^5},\frac{1}{21{\cdot}b^4}\}} \right)  \in (0,\frac{2}{3} \beta)$, the following series is summable
		\begin{align*}
			\sum_{q\geq0}\$v_{q+1}-v_q\$_{C^{\vartheta},2r}
			&\leq \sum_{q\geq0}\$v_{q+1}-v_q\$^{1-\vartheta}_{C^0,2r} \$v_{q+1}-v_q\$^{\vartheta}_{C^1,2r}
			\\ &\lesssim \sum_{q\geq0} \delta_{q+1}^{\frac{1-\vartheta}{2}} \lambda_{q+1}^{\frac{3}{2}\vartheta} \delta_{q+1}^{\frac{1}{2}\vartheta}
			\leq \sqrt{3r}La^{\frac32b\vartheta}+\lambda_2^{\beta}\sum_{q\geq1} \lambda_{q+1}^{\frac{3}{2}\vartheta-\beta} <\infty.
		\end{align*}
		Thus, we may define a limiting function $v=\lim_{q\to \infty}v_q$ which lies
		in $L^{2r}(\Omega;C(\mR,C^{\vartheta}))$. Since $v_q$ is $(\mathcal{F}_t)_{t\in\mR}$-adapted for every $q\in\mathbb{N}_{0}$, the limit
		$v$ is $(\mathcal{F}_t)_{t\in\mR}$-adapted as well. Combining \eqref{estimate-zq} with \eqref{z-p} in the proof of Proposition~\ref{p:iteration1} and using interpolation again, we deduce for the same $\vartheta$ as above and any $p\geq1$
		\begin{align*}
			\sum_{q\geq0}\$z_{q+1}-z_q\$_{C^{\vartheta},p}
			&\leq \sum_{q\geq0}\$z_{q+1}-z_q\$^{1-\vartheta}_{C^0,p} \$z_{q+1}-z_q\$^{\vartheta}_{C^1,p}
			\\ &\lesssim p\sum_{q\geq0} (\lambda_{q+1}^{-\frac{\gamma}{8}\sigma}+\lambda_{q+2}^{-\gamma}\lambda_{q+1}^{\frac{\gamma}{8}})^{1-\vartheta} \lambda_{q+2}^{\frac{\gamma}{8}\vartheta} 
			\\&\lesssim p\sum_{q\geq1} \left( \lambda_{q+1}^{\frac{\gamma}{8}(b\vartheta+\sigma\vartheta-\sigma)}+\lambda_{q+1}^{\gamma(\frac{9}{8}b\vartheta+\frac{1}{8}-b)}\right) <\infty,
		\end{align*}
		where we used $\vartheta<\min{\{\frac{\sigma}{b+\sigma}, \frac{8b-1}{9b}\}}$ in the last inequality. Then, for any $p\geq 1$, we have $\lim_{q\to \infty}z_q=z$ in $L^p(\Omega;C(\mR,C^\vartheta))$.
		Furthermore, it follows from \eqref{vqee} that $\lim_{q\to \infty}\mathring{R}_q=0$ in $L^1(\Omega;C(\mR,C^0))$.
		Thus, $v$ is an analytically  weak solution to \eqref{nonlinear}. Letting $u=v+z$ we obtain an $(\mathcal{F}_{t})_{t\in\mR}$-adapted analytically weak solution to \eqref{eul1} and the estimate \eqref{vtheta:11} for $u$ holds. Finally, \eqref{eq:K2} follows from \eqref{vqg}. By choosing different energy functions, we can obtain infinitely many such solutions. This concludes the proof of Theorem \ref{v1}.
	\end{proof}
	
		\begin{remark}
		It appears challenging to specify the energy pathwise, as done in \cite{HZZ21markov}. Indeed, to obtain the pathwise estimates for energy, our technique requires absorbing the pathwise $C^0_x$-norms of $\mathring{R}_q$ and $z_{q+1}-z_q$ into the small parameter $\delta_{q+2}$, as demonstrated in the proof of Proposition~\ref{proof:en} below. Instead of using stopping time as in \cite{HZZ21markov} to control the noise, we utilize a cut-off function $\chi_q$ here, the support of which also increases with $q$. This makes it not easy to absorb the above two terms into $\delta_{q+2}$.
	\end{remark}

	\section{Convex integration scheme}\label{convex}
	In this section, we aim to construct a pair $(v_{q+1}, \mathring{R}_{q+1})$ to solve the system \eqref{euler1}, given  $(v_{q}, \mathring{R}_{q})$ as in the statement of Proposition~\ref{p:iteration1}.
	First of all, we fix the parameters used during the construction in Section~\ref{311'} and continue with a mollification step in Section~\ref{312'}.  Section~\ref{s:313} is devoted to introducing the new perturbation $w_{q+1}$.  We follow the construction method in  \cite{HZZ22b} and \cite{BV19} to achieve this. More precisely, we construct new  amplitude functions $a_{(\xi)}$ similarly to \cite{HZZ22b}, meanwhile, we incorporate the solutions for transport equations into Beltrami waves to obtain an acceptable transport error as in \cite{BV19}. As pathwise positive lower bounds of the solutions to the transport equations are required, we use the cut-off technique to control the growth of noise and add pathwise estimates of the velocity field $v_q$ in inductive iteration. In Section~\ref{316'}, we show how to construct the new stress $\mathring{R}_{q+1}$. 
		\subsection{Choice of parameters}\label{311'}
	In the sequel, additional parameters will be carefully chosen to verify all the conditions appearing in the estimates below. First, for a fixed integer $b\geq7$, we let $\beta,\gamma \in (0,1)$ be small parameters satisfying 
	\begin{equation*}
		4b^3\gamma+4b^3\beta<1, \qquad 3b\beta <\gamma.
	\end{equation*}
	In the sequel, we also use the following bound
	\begin{equation*}
		2b^2\beta+\beta<\frac{\gamma\sigma}{8},
	\end{equation*}
	which can be obtained by choosing $\gamma=\frac{2}{9b^3}$ and
	$\beta<\min{\{\frac{\sigma}{78{\cdot}b^5},\frac{2}{27{\cdot}b^4}\}}$. 
	The last free parameter $a\in 2^{36b^2\mathbb{N}}$ is chosen sufficiently large such that $a^{\frac{b\gamma}{8}}\in \mathbb{N}$ and  $2<a^{(\frac{b}{4}-1)b^2\gamma}$. This ensures $f(q)\in \mathbb{N}$ and \eqref{zqcut} holds. In the following sections, we increase $a$ to absorb various implicit and universal constants.
	
	\subsection{Mollification}\label{312'} 
	In order to guarantee smoothness throughout the construction, we replace $v_q$ with a mollified velocity field $v_\ell$. For this purpose, we choose a small parameter $\ell= \lambda_q^{-2}$. Let $\{ \phi_\varepsilon \}_{\varepsilon>0}$ be a family of mollifiers on $\mathbb{R}^3$, and $\{ \varphi_\varepsilon \}_{\varepsilon>0}$ be a family of mollifiers with support in $(0,1)$. Here we use the one-sided mollifiers to reserve adaptedness. We define a mollification of $v_q$, $z_q$ and $\mathring{R}_q$ in space and time by
	\begin{align*}
		v_\ell=(v_q*_x\phi_\ell)*_t\varphi_\ell,\qquad
		z_\ell=({z_q}*_x\phi_\ell)*_t\varphi_\ell, \qquad
		\mathring{R}_\ell=(\mathring{R}_q*_x\phi_\ell)*_t\varphi_\ell,
	\end{align*}
	where $\phi_\ell=\frac{1}{\ell^3}\phi(\frac{\cdot}{\ell})$ and $\varphi_\ell=\frac{1}{\ell}\varphi(\frac{\cdot}{\ell})$. By definition, it is easy to see that $z_\ell$, $v_\ell$ and $\mathring{R}_\ell$ are $(\mathcal{F}_t)_{t\in \mR}$ adapted. Then using \eqref{euler1} we obtain that $(v_\ell ,\mathring{R}_\ell)$ obey
	\begin{equation}\label{mollification1}
		\aligned
		\partial_t v_\ell -z_\ell +\div((v_\ell+z_\ell)\otimes (v_\ell+z_\ell))+\nabla p_\ell&=\div (\mathring{R}_\ell+R_\textrm{commutator}),
		\\\div v_\ell &=0,
		\endaligned
	\end{equation}
	where the commutator error $R_{\textrm{commutator}}$ and pressure term $p_\ell$ are given by
	\begin{equation}\label{Com}
		R_{\textrm{commutator}}=(v_\ell+z_\ell)\mathring{\otimes}(v_\ell+z_\ell)-(((v_q+z_q)\mathring{\otimes}(v_q+z_q))*_x\phi_\ell)*_t\varphi_\ell,
	\end{equation}
	\begin{equation*}
		p_\ell=(p_q*_x\phi_\ell)*_t\varphi_\ell-\frac{1}{3}\big(|v_\ell+z_\ell|^2-(|v_q+z_q|^2*_x\phi_\ell)*_t\varphi_\ell).
	\end{equation*}

		\subsection{Perturbation of the velocity}\label{s:313}
	As usual in convex integration schemes, the new velocity field $v_{q+1}$ is a perturbation of $v_\ell$:
	\begin{align*}
		v_{q+1}:=v_\ell+w_{q+1},
	\end{align*}
	where $w_{q+1}:=w_{q+1}^{(p)}+w_{q+1}^{(c)}$ is constructed following \cite{BV19}.
	We proceed with construction by introducing flow maps that satisfy transport equations and time cutoffs.
	\subsubsection{Flow maps and cutoffs}\label{313'}
	In order to have an acceptable transport error in the estimates of $\mathring{R}_{q+1}$, the perturbation $w_{q+1}$ needs to be transported by the flow of the vector field $\partial_t+(v_\ell+z_\ell) \cdot \nabla$. Similar to \cite{BV19}, a natural way to achieve this, is to replace the
	linear phase $\xi \cdot x$ in the definition of the Beltrami wave $W_{\xi, \lambda}$ by the nonlinear phase $\xi \cdot \Phi(x,t)$, where $\Phi$ is transported by the aforementioned vector field. A slight difference from \cite{BV19} is that we extend the definition of $\Phi$ to the whole time line $\mR$.
	
	For any integer $k\in \mathbb{Z}$, we subdivide $[k,k+1]$ into time intervals of size $\ell$ and solve transport equations on these intervals. For $j\in \{0,1,...,\lceil \ell^{-1} \rceil\}$, 
	we define the adapted map $\Phi_{k,j}:\Omega\times{\mR^3}\times[k+(j-1)\ell,k+(j+1)\ell]\to{\mR^3}$ as the $\mT^3$ periodic solution of
	\begin{equation}\label{eq:te}
		\aligned
		(\partial_t+(v_\ell+z_\ell)\cdot\nabla) \Phi_{k,j}&=0,
		\\ \Phi_{k,j}(k+(j-1)\ell,x)&=x.
		\endaligned
	\end{equation}
	We put further details on the $C_x^n$-estimates of $\Phi_{k,j}$ in Appendix~\ref{s:B}. Specifically, the following two estimates which follow from \eqref{eq:a6} and \eqref{eq:Phi0}
	
	\begin{subequations}\label{t1}
		\begin{align}
			&\sup_{t\in[k+(j-1)\ell,k+(j+1)\ell]} \|\nabla \Phi_{k,j}(t)-\Id\|_{C_x^0} \leq 
			\lambda^{-\frac{1}{2}}_q\ll1 \label{Phija},
			\\ \frac12\leq&\sup_{t\in[k+(j-1)\ell,k+(j+1)\ell]} \| \nabla\Phi_{k,j}(t)\|_{C_x^0} \leq 2.\label{Phijb}
		\end{align}
	\end{subequations}
	
	As indicated by the proof in Appendix~\ref{s:B}, the validity of \eqref{Phija} requires that both $v_q$ and $z_q$ have pathwise bounds. Therefore, we cut off $\tilde{z}_q$ and introduce \eqref{vqcc} in the inductive iteration, instead of solely considering the moment bounds in the inductive iteration as in \cite{HZZ22b}. The above estimates play a crucial role in canceling the oscillation error in the new Reynolds stress $\mathring{R}_{q+1}$. In particular, \eqref{Phijb} ensures the validity of stationary phase Lemma~\ref{lemma1'} below in Section~\ref{sss:R}, which requires the positive lower bounds of $\nabla \Phi_{k,j}$. Moreover, we could easily extend $\Phi_{k,j}$ from $\Omega\times \mathbb{R}^3 \times[k+(j-1)\ell,k+(j+1)\ell]$ 
	to $\Omega\times \mathbb{R}^3 \times[k,k+1]$
	in an adapted way, which is still denoted as $\Phi_{k,j}$. The extension is not unique but in the following, we only use the value of $\Phi_{k,j}$ on $[k+(j-1)\ell,k+(j+1)\ell]$.
	
	We also let $\eta$ be a non-negative function supported in $(-1,1)$, which equals to 1 on $(-\frac{1}{4},\frac{1}{4})$ and such that the square of the shifted bump functions
	\begin{equation*}
		\eta_j(t)=\eta(\ell^{-1}t-j)
	\end{equation*}
	form a partition of unity, namely, for all $t\in [0,1]$
	\begin{equation*}
		\sum_j \eta^2_j(t)=1.
	\end{equation*}
	 We then extend the definition of $\eta$ to $\mathbb{R}$. More precisely, let $\eta^{(k)}(t)=\eta(t-k)$, then we know supp$\eta^{(k)} \subset (k-1,k+1)$. Similarly, the shifted bump functions 
	\begin{align*}
		\eta_{k,j}(t)=\eta(\ell^{-1}(t-k)-j)
	\end{align*}
	form a partition of unity
	\begin{equation}\label{chi2}
		\sum_j \eta^2_{k,j}(t)=1,
	\end{equation} 
	for all $t\in [k,k+1]$.
	\subsubsection{Construction of $w_{q+1}$}
	Let us now proceed with the construction of the perturbation $w_{q+1}$.
	To this end, the building blocks of the perturbation are the Beltrami waves presented in \cite[section 5.4]{BV19}, which we recall in Appendix~\ref{Bw2}. Since we use moment bounds of $\mathring{R}_q$ to the iterative estimates in Proposition~\ref{p:iteration1}, we have to adjust the amplitude functions $a_{(\xi)}$ such that we can apply Lemma~\ref{Belw22} in our setting. 
	More precisely, we first define $\rho$ as follows
	\begin{equation}\label{eq:rho3'}
		\rho(t,x):=\sqrt{\ell^2+|\mathring{R}_\ell(t,x)|^2}+c_*\zeta_\ell(t),
	\end{equation}
	\begin{align}\label{zetaq}
		\zeta_q(t):=\frac1{3{\cdot} (2\pi)^3} \Big[ e(t)(1-\delta_{q+2})-\E\|v_q(t)+{z_q}(t)\|_{L^2}^2\Big],
	\end{align}
	and $	\zeta_\ell:=\zeta_q*_t\varphi_\ell$.
	The constant $c_*>0$ is introduced in Lemma~\ref{Belw22} and 
     we take the constant $c_R$ in Proposition~\ref{p:iteration1} satisfying \begin{align}\label{cR}
     		c_R<c_*.
     \end{align}
     
	Now, we define the amplitude functions
	\begin{equation}\label{def axi}
	a_{(\xi)}(t,x)=a_{q+1,j,\xi}(t,x)=c_*^{-\frac{1}{2}} \rho^{\frac12} \eta_{k,j}(t) \gamma_\xi^{(j)} \left(\Id
	- \frac{c_*\mathring{R}_\ell}{\rho}\right), \ t\in [k,k+1],
\end{equation}
	where $\gamma_\xi^{(j)}$ is introduced in Appendix~\ref{Bw2}. By the definition of $\rho$ we know
	\begin{equation*}
		\left\| \Id-\left(\Id- \frac{c_*\mathring{R}_\ell}{\rho}\right)\right\|_{C^0_{t,x}} \leq c_*.
	\end{equation*}
	As a result,
	$\Id-c_*\rho^{-1}\mathring{R}_{\ell}$ lies in the domain of the function $\gamma_{\xi}^{(j)}$ and 
	we deduce from $\eqref{chi2}$ and  Lemma~\ref{Belw22} that
	\begin{equation}\label{-osc}
		c_*^{-1}\rho\Id-\mathring{R}_\ell=\frac{1}{2} \sum_j \sum_{\xi\in\Lambda_j} a^2_{(\xi)}(\Id-\xi\otimes\xi)
	\end{equation}
	holds pointwise, where $\Lambda_j\subset \mathbb{S}^2\cap \mathbb{Q}^3$ is finite subset. By Lemma~\ref{Belw22}, it is sufficient to consider index sets $\Lambda_0$ and $\Lambda_1$ to have 12 elements, and for $j\in \mathbb{Z}$ we denote $\Lambda_j=\Lambda_{j \ \mathrm{mod}\ 2}$.

	With these preparations in hand,  we define the principal part $w_{q+1}^{(p)}$ of the perturbation $w_{q+1}$ as in \cite{BV19}. More precisely, for $t\in [k,k+1]$, we define
	\begin{equation*}
		w_{(\xi)}(t,x):= a_{q+1,j,\xi}(t,x)B_\xi e^{i\lambda_{q+1}\xi\cdot\Phi_{k,j}(t,x)},
	\end{equation*}
	and
	\begin{equation}\label{principal}
		\aligned
		w^{(p)}_{q+1}(t,x):&=\sum_j\sum_{\xi\in\Lambda_j}a_{q+1,j,\xi}(t,x)B_\xi e^{i\lambda_{q+1}\xi\cdot\Phi_{k,j}(t,x)}
		\\ &=\sum_j\sum_{\xi\in\Lambda_j}c_*^{-\frac{1}{2}} \rho^{\frac12} \eta_{k,j}(t) \gamma_\xi^{(j)} \left(\Id- \frac{c_*\mathring{R}_\ell}
		{\rho}\right)B_\xi e^{i\lambda_{q+1}\xi\cdot\Phi_{k,j}(t,x)},
		\endaligned
	\end{equation}
	where $B_{\xi}$ is a suitable vector defined in Appendix~\ref{Bw2}. Since the coefficients $a_{(\xi)}$ and $\Phi_{k,j}$ are $(\mathcal{F}_t)_{t\in \mR}$-adapted, we deduce that
	$w_{q+1}^{(p)}$ is $(\mathcal{F}_t)_{t\in \mR}$-adapted.
	
	Next, we define the incompressibility correction $w_{q+1}^{(c)}$. We aim to add a corrector to $w_{q+1}^{(p)}$ such that the resulting function is perfect $\mathrm{curl}$, making it thus divergence-free. To this end, it is useful to introduce the following scalar $phase$ function
	\begin{equation}
		\phi_{(\xi)}(t,x):=e^{i \lambda_{q+1}\xi\cdot (\Phi_{k,j}(t,x)-x)}, \quad t\in[k,k+1].
	\end{equation}
	Using the same notation as in Appendix~\ref{Bw2}, we define 
	\begin{align*}
		W_{(\xi)}(x)=B_\xi e^{i\lambda_{q+1} \xi\cdot x}.
	\end{align*}
	Since $\phi_{(\xi)}$ and $a_{(\xi)}$ are scalar functions, it follows from Appendix~\ref{Bw2} that $\mathrm{curl}$ $W_{(\xi)}=\lambda_{q+1} W_{(\xi)}$. By a direct computation, we deduce
	\begin{align}\label{curl W}
		a_{(\xi)} \phi_{(\xi)} W_{(\xi)}=\frac{1}{\lambda_{q+1}} \textrm{curl} \left(a_{(\xi)} \phi_{(\xi)} W_{(\xi)}\right)-\frac{1}{\lambda_{q+1}} \nabla \left(a_{(\xi)} \phi_{(\xi)}\right) \times W_{(\xi)}.
	\end{align}
	We therefore define
	\begin{equation}
		\aligned
		w_{(\xi)}^{(c)}(t,x):&=\frac{1}{\lambda_{q+1}} \nabla  (a_{(\xi)}\phi_{(\xi)}) \times B_\xi e^{i\lambda_{q+1}\xi\cdot x}
		\\&=\left( \frac{\nabla a_{(\xi)}}{\lambda_{q+1}}+ i a_{(\xi)} (\nabla \Phi_{k,j}(t,x)-\Id)\xi \right) \times W_{(\xi)}(\Phi_{k,j}(t,x)), \ t\in [k,k+1].
		\endaligned
	\end{equation}
	The incompressibility correction $w^{(c)}_{q+1}$ is defined as
	\begin{equation*}
		w^{(c)}_{q+1}(t,x)=\sum_j\sum_{\xi\in\Lambda_j}w_{(\xi)}^{(c)}(t,x).
	\end{equation*}
	 Since the coefficients $a_{(\xi)}$ and $\Phi_{k,j}$ are $(\mathcal{F}_t)_{t\in \mR}$-adapted, we deduce that
	$w_{q+1}^{(c)}$ is $(\mathcal{F}_t)_{t\in \mR}$-adapted. 
   Moreover, by \eqref{curl W} we deduce that 
		\begin{align*}
			w^{(p)}_{q+1}+w^{(c)}_{q+1}=\frac{1}{\lambda_{q+1}}\sum_j\sum_{\xi\in\Lambda_j}\textrm{curl} \left(a_{(\xi)} \phi_{(\xi)} W_{(\xi)}\right).
	\end{align*}
   
   	Finally, the new perturbation is defined as  
   	\begin{align}\label{wq+1'}
   		w_{q+1}:=w^{(p)}_{q+1}+w^{(c)}_{q+1},
   	\end{align}
   	and so clearly $w_{q+1}$ is  mean zero, divergence-free and $(\mathcal{F}_t)_{t\in \mR}$-adapted.  The new velocity at the level $q+1$ is defined as
   	$$	v_{q+1}:=v_\ell+w_{q+1}.$$
   	Thus, by the previous discussion, it is also divergence-free and $(\mathcal{F}_t)_{t\in \mR }$-adapted.

		\subsection{Definition of the Reynolds stress $\mathring{R}_{q+1}$}\label{316'}
	
Recalling the system \eqref{mollification1}
		\begin{align*}
			\partial_t v_\ell -z_\ell +\div((v_\ell+z_\ell)\otimes (v_\ell+z_\ell))+\nabla p_\ell&=\div (\mathring{R}_\ell+R_\textrm{commutator}),
		\end{align*}
		and substituting $v_{q+1}=v_\ell +w_{q+1}$ into $\eqref{euler1}$ at the level $q+1$, we obtain that
	\begin{equation}\label{Rno1}
		\aligned
		& \div \mathring{R}_{q+1}-\nabla p_{q+1}
		\\& =\underbrace{(\partial_t+(v_\ell+z_\ell)\cdot \nabla)w_{q+1}^{(p)}}
		_{\div(R_{\textrm{transport}})}+ \underbrace{ \div (w_{q+1}^{(p)} \otimes w_{q+1}^{(p)}+\mathring{R}_\ell)}
		_{\div(R_{\textrm{oscillation}})+\nabla p_{\textrm{oscillation}}}
		\\&+\underbrace{w_{q+1}\cdot \nabla(v_\ell +z_\ell)}_{\div(R_{\textrm{Nash}})}+\underbrace{(\partial_t+(v_\ell+z_\ell)\cdot \nabla ) w_{q+1}^{(c)}+\div(w_{q+1}^{(c)}\otimes w_{q+1}+w_{q+1}^{(p)} \otimes w_{q+1}^{(c)})}_{\div(R_{\textrm{corrector}})+\nabla p_{\textrm{corrector}}}
		\\&+\underbrace{\div(v_{q+1} \otimes (z_{q+1}-z_{\ell} )+(z_{q+1} -z_{\ell} )\otimes v_{q+1}+z_{q+1}\otimes z_{q+1}-z_{\ell} \otimes z_{\ell})-z_{q+1}+z_{\ell}}_{\div(R_{\textrm{commutator1}})+\nabla p_{\textrm{commutator1}}}
		\\ &+\div(R_{\textrm{commutator}}) -\nabla p_{\ell}.
		\endaligned
	\end{equation}
	Here  $R_{\textrm{commutator}}$ is defined in $\eqref{Com}$, and by using the inverse divergence operator $\mathcal{R}$ introduced in Section~\ref{s:2.1}, we define
	\begin{equation*}
		\aligned
		R_{\textrm{transport}}&:=\mathcal{R}\left((\partial_t+(v_\ell+z_\ell)\cdot \nabla)w_{q+1}^{(p)}\right),
		\\ R_{\textrm{Nash}}&:=\mathcal{R} ( w_{q+1}\cdot \nabla(v_\ell +z_\ell)),
		\\ R_{\textrm{corrector}}&:=\mathcal{R} \left((\partial_t+(v_\ell+z_\ell)\cdot \nabla ) w_{q+1}^{(c)}\right)+\left(w_{q+1}^{(c)}\mathring \otimes w_{q+1}^{(c)}+w_{q+1}^{(p)}\mathring \otimes w_{q+1}^{(c)}+w_{q+1}^{(c)}\mathring \otimes w_{q+1}^{(p)}\right),
		\\ R_{\textrm{commutator1}}&:=v_{q+1}\mathring \otimes (z_{q+1}- z_{\ell}) +(z_{q+1}-z_{\ell})\mathring \otimes v_{q+1}+z_{q+1}\mathring \otimes z_{q+1}-z_{\ell}\mathring \otimes z_{\ell}-\mathcal{R}(z_{q+1}-z_{\ell}),
		\\ \nabla p_{\textrm{corrector}}&:=\frac{1}{3}(2w_{q+1}^{(c)}\cdot w_{q+1}^{(p)}+|w_{q+1}^{(c)}|^2),
		\\\nabla p_{\textrm{commutator1}}&:=\frac{1}{3}(v_{q+1} \cdot z_{q+1}-v_{q+1} \cdot z_{\ell} +z_{q+1} \cdot v_{q+1}-z_{\ell} \cdot v_{q+1}+z_{q+1} \cdot z_{q+1}-z_{\ell}\cdot z_{\ell}).
		\endaligned
	\end{equation*}
	
	In order to define the remaining oscillation error from the second line in \eqref{Rno1}, we first note that for $j$ and $j'$ such that $|j-j'|\geq2$, we have $\eta_{k,j}(t)\eta_{k,j'}(t)=0$ . Second, we have $\Lambda_j \cap \Lambda_{j'}= \emptyset$ for $|j-j'|=1$, and by Lemma~\ref{Belw11} we have  $\div (W_\xi \otimes W_{\xi'}+W_{\xi'} \otimes W_{\xi})=\nabla(W_{\xi}\cdot W_{\xi'})$. Therefore, we may use Lemma~\ref{Belw22} and \eqref{-osc} to obtain
	
	\begin{equation}\label{Ros}
		\aligned
		\div (w_{q+1}^{(p)} \otimes w_{q+1}^{(p)}+\mathring{R}_\ell)
		&=\div \left(\sum_{j,j^{'} ,\xi,\xi^{'}} w_{(\xi)}\otimes w_{(\xi^{'})}+\mathring{R}_{\ell} \right)
		\\&=\div\left( \sum_{j,\xi}c^{-1}_*\rho \eta_{k,j}^2\gamma_{\xi}^{(j)}\left(\Id
		- c_*\rho^{-1}\mathring{R}_\ell \right)^2 B_{\xi}\otimes B_{-\xi} +\mathring{R}_{\ell}\right) 
		\\&\qquad \qquad \qquad +\div\left( \sum_{j,j',\xi+\xi'\neq0} a_{(\xi)}a_{(\xi')}W_{\xi}\circ\Phi_{k,j}\otimes W_{\xi'}\circ\Phi_{k,j'} \right) 
		\\&=\div\left( c^{-1}_*\rho(\Id
		- c_*\rho^{-1}\mathring{R}_\ell)+ \mathring{R}_\ell \right) 
		\\ &\qquad \qquad \qquad +\div\left( \sum_{j,j',\xi+\xi'\neq0} a_{(\xi)}a_{(\xi')}W_{\xi}\circ\Phi_{k,j}\otimes W_{\xi'}\circ\Phi_{k,j'} \right) 
		\\&=\nabla \left(c_*^{-1}\rho\right)+\sum_{j,j^{'} ,\xi+\xi^{'}\not =0} (W_{(\xi)}\otimes W_{(\xi^{'})}) \nabla \left(a_{(\xi)} a_{(\xi^{'})} \phi_{(\xi)} \phi_{(\xi^{'})}\right)
		\\ &\qquad \qquad \qquad+\frac{1}{2} \sum_{j,j^{'},\xi+\xi^{'}\not =0} a_{(\xi)} a_{(\xi^{'})} \phi_{(\xi)} \phi_{(\xi^{'})} \nabla \left(W_{(\xi)}\cdot W_{(\xi^{'})} \right)
		\\&=\div(R_{\textrm{oscillation}})+\nabla p_{\textrm{oscillation}}.
		\endaligned
	\end{equation}
	Note that compared to \cite{BV19}, a slight difference is the appearance of  $\div(\rho \mathrm{Id})$. This term can be put into the pressure term $p_{\textrm{oscillation}}$. Above, we have denoted 
	\begin{equation*}
		p_{\textrm{oscillation}}:=\frac{1}{2} \sum_{j,j^{'} ,\xi+\xi^{'}\not =0} a_{(\xi)} a_{(\xi^{'})} \phi_{(\xi)} \phi_{(\xi^{'})} \left(W_{(\xi)}\cdot W_{(\xi^{'})} \right)+c_*^{-1}\rho,
	\end{equation*}
	and
	\begin{equation*}
		R_{\textrm{oscillation}}:=\sum_{j,j^{'} ,\xi+\xi^{'}\not =0} \mathcal{R}\left( \left(W_{(\xi)}\otimes W_{(\xi^{'})}-\frac{W_{(\xi)}\cdot W_{(\xi^{'})}}{2}\mathrm{Id}\right) \nabla (a_{(\xi)} a_{(\xi^{'})} \phi_{(\xi)} \phi_{(\xi^{'})})  \right).
	\end{equation*}
	Finally we define the Reynolds stress at the level $q+1$ by
	\begin{equation}\label{Reynold1}
		\mathring{R}_{q+1}=R_{\textrm{transport}}+R_{\textrm{oscillation}}+R_{\textrm{Nash}}+R_{\textrm{corrector}}+R_{\textrm{commutator}}+R_{\textrm{commutator1}},
	\end{equation}
	and 
	\begin{equation*}
		p_{q+1}=p_{\ell}-p_{\textrm{oscillation}}-p_{\textrm{corrector}}-
		p_{\textrm{commutator1}}.
	\end{equation*}

	\section{ Inductive estimates and proof of Proposition~\ref{p:iteration1}}\label{s:it1}
	
	In this section, we collect all the necessary estimates of $v_{q+1}$, $\mathring{R}_{q+1}$ and the energy to complete the proof of the Proposition~\ref{p:iteration1}. As a preliminary step, we need to give estimates on the $C_{t,x}^N$-norm of the amplitude functions $a_{(\xi)}$ defined in Section~\ref{s:313}. 
	 This is done in forthcoming Proposition~\ref{51}, whose proof is given in Appendix~\ref{ap:A'}.
     
     \begin{proposition}\label{51}
     	 Let $a_{(\xi)}$ be defined by \eqref{def axi}. Then we have for any $N\geq2$
     	 \begin{equation}\label{axiN}
     	 	\|a_{(\xi)}\|_{C^N_{t,x}} \lesssim \ell^{-2N-\frac{1}{2}}\lambda_{q+3}^{(N+1)\gamma},
     	 \end{equation}
     	 and	
     	 \begin{equation}\label{axi012}
     	 	\|a_{(\xi)}\|_{C^0_{t,x}} \lesssim \lambda_{q+3}^{\gamma}, \qquad \|a_{(\xi)}\|_{C^1_{t,x}} \lesssim \ell^{-2}\lambda_{q+3}^{2\gamma},
     	 \end{equation}
     	 where the implicit constants are independent of $q$.
     \end{proposition}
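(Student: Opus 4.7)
My plan is to reduce the estimation of $a_{(\xi)}$ to separate bounds on its constituent ingredients --- the mollified stress $\mathring R_\ell$, the energy correction $\zeta_\ell$, the scalar $\rho$, and the composition $\gamma_\xi^{(j)}(\Id - c_*\mathring R_\ell/\rho)$ --- and then combine them via Leibniz and Fa\`a di Bruno.

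First I would gather pointwise bounds on the ingredients. Standard mollification estimates together with the inductive bound \eqref{vqdd} give $\|\mathring R_\ell\|_{C^N_{t,x}} \lesssim \ell^{-N}\lambda_{q+3}^{\gamma}$ for all $N \ge 1$, and $\|\mathring R_\ell\|_{C^0_{t,x}} \le \lambda_{q+3}^\gamma$. The inductive energy identity \eqref{vqg} combined with $\delta_{q+2}\ll\delta_{q+1}$ yields two-sided bounds of the form $c\,\delta_{q+1}\underline e \lesssim \zeta_q \lesssim \bar e$, which are preserved under mollification, with higher derivatives of $\zeta_\ell$ picking up at most $\ell^{-N}$ from the time mollifier. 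The key consequences for $\rho = \sqrt{\ell^2+|\mathring R_\ell|^2} + c_*\zeta_\ell$ are then the uniform lower bound $\rho \ge \ell$, the uniform upper bound $\rho \lesssim \lambda_{q+3}^{\gamma}$, and the crucial pointwise ratio bound $|\mathring R_\ell|/\rho \le 1$.

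Next I would apply Fa\`a di Bruno to the two nonlinear ingredients. For $\rho^{1/2}$, using $|(y^{1/2})^{(k)}| \lesssim y^{1/2-k}$ with $\rho \ge \ell$, together with the bounds on $D^{N}\rho$ inherited from $D^N\mathring R_\ell$ and $D^N\zeta_\ell$, one obtains $\|\rho^{1/2}\|_{C^N_{t,x}} \lesssim \ell^{1/2-2N}\lambda_{q+3}^{N\gamma}$ for $N\ge1$, while $\|\rho^{1/2}\|_{C^0_{t,x}} \lesssim \lambda_{q+3}^{\gamma/2}$ follows from the upper bound on $\rho$. For the composition $\gamma_\xi^{(j)}(\Id - c_*\mathring R_\ell/\rho)$, smoothness of $\gamma_\xi^{(j)}$ on the compact set in which its argument lies (Appendix~\ref{Bw2}, Lemma~\ref{Belw22}) together with repeated use of the identity $\partial(\mathring R_\ell/\rho) = \partial\mathring R_\ell/\rho - (\mathring R_\ell/\rho)\,\partial\rho/\rho$ --- where one exploits $|\mathring R_\ell|/\rho \le 1$ rather than bounding numerator and denominator separately --- inductively gives $\|\mathring R_\ell/\rho\|_{C^N_{t,x}} \lesssim \ell^{-2N}\lambda_{q+3}^{N\gamma}$ and hence $\|\gamma_\xi^{(j)}(\Id - c_*\mathring R_\ell/\rho)\|_{C^N_{t,x}} \lesssim \ell^{-2N}\lambda_{q+3}^{N\gamma}$.

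Finally, Leibniz together with $\|\eta_{k,j}\|_{C^N_t}\lesssim \ell^{-N}$ will assemble these pieces into the stated bounds. For $N\ge2$ the dominant contribution comes from all derivatives landing on the composition, yielding $\ell^{-2N}\lambda_{q+3}^{(N+1/2)\gamma}\le \ell^{-2N-1/2}\lambda_{q+3}^{(N+1)\gamma}$, where the extra $\ell^{-1/2}$ is convenient slack; the cases $N=0,1$ are treated by direct computation so as to avoid the wasteful $\rho^{1/2}\le \ell^{1/2}$ loss, giving $\lambda_{q+3}^{\gamma/2}\le\lambda_{q+3}^{\gamma}$ and $\ell^{-2}\lambda_{q+3}^{3\gamma/2}\le\ell^{-2}\lambda_{q+3}^{2\gamma}$ respectively. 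The main technical obstacle will be the book-keeping in the Fa\`a di Bruno estimate for $\|\gamma_\xi^{(j)}(\Id-c_*\mathring R_\ell/\rho)\|_{C^N_{t,x}}$: one must systematically exploit the cancellation $|\mathring R_\ell|/\rho \le 1$ at every step of the induction on $N$ so that each derivative costs only $\rho^{-1}\le \ell^{-1}$, rather than the naive $\lambda_{q+3}^\gamma\ell^{-?}$ one would obtain by bounding numerator and denominator independently.
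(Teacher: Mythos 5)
Your proposal is correct and follows essentially the same route as the paper's Appendix~\ref{ap:A'}: estimate $\mathring{R}_\ell$ and $\zeta_\ell$ by mollification and the inductive bounds, derive $\rho\geq\ell$, $|\mathring{R}_\ell|/\rho\leq1$ and the $C^N$ bounds on $\rho$ and $\rho^{1/2}$ via Fa\`a di Bruno, bound the composition with $\gamma_\xi^{(j)}$, and assemble with Leibniz (the $\eta_{k,j}$ factor costing only $\ell^{-1}$ per derivative and hence subdominant). The one small structural deviation is in the bound on $\gamma_\xi^{(j)}(\Id-c_*\mathring{R}_\ell/\rho)$: you estimate $\mathring{R}_\ell/\rho$ directly via the identity $\partial(\mathring{R}_\ell/\rho)=\partial\mathring{R}_\ell/\rho-(\mathring{R}_\ell/\rho)\,\partial\rho/\rho$, exploiting $|\mathring{R}_\ell/\rho|\leq1$ at each inductive step and arriving at $\|\mathring{R}_\ell/\rho\|_{C^k}\lesssim\ell^{-2k}\lambda_{q+3}^{k\gamma}$, whereas the paper writes $\mathring{R}_\ell/\rho=\mathring{R}_\ell\cdot(1/\rho)$ and applies Leibniz together with a separate chain-rule estimate on $1/\rho$, picking up an extra $\ell^{-1}\lambda_{q+3}^{\gamma}$; your intermediate bound is sharper but both comfortably yield the stated $\ell^{-2N-1/2}\lambda_{q+3}^{(N+1)\gamma}$ and the $N=0,1$ cases.
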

	
    This section is organized as follows:
     Section~\ref{sss:v} contains the inductive estimates of $v_{q+1}$, especially the pathwise and moment estimates. We establish the inductive pathwise and moment estimates on $\mathring{R}_{q+1}$ in Section~\ref{sss:R}. Finally, in Section~\ref{s:en'}, we show how the energy is controlled.  Note that we first give pathwise estimates by using \eqref{axiN} and \eqref{axi012}, then taking expectation and using inductive assumptions can derive uniform moment estimates independent  of time. In addition, we also need the estimates of transport equations, as provided in the appendix~\ref{s:B}.  A useful stationary phase lemma on estimating $\mathring{R}_{q+1}$ previously used in the deterministic setting is recalled in Section~\ref{sss:R}.
	
	\subsection{Inductive estimates for $v_{q+1}$}\label{sss:v}
	In this section, we verify the inductive estimates \eqref{vqaa}, \eqref{vqbb}, \eqref{vqcc} and \eqref{vq+1-vq}.
		By the definition of $w^{(p)}_{q+1}$ in \eqref{principal}, it follows from \eqref{vqdd}, \eqref{eq:rho3'} and \eqref{A5} that 
		\begin{equation}\label{wp00}
			\| w^{(p)}_{q+1}\|_{C^0_{t,x}}\lesssim 2|\Lambda_j|c_*^{-\frac{1}{2}}M(\| \mathring{R}_q\|^{\frac{1}{2}}_{C^0_{[t-1,t+1],x}}+\ell^\frac{1}{2}) +2|\Lambda_j| M\bar{e}\delta_{q+1}^\frac12 \leq \lambda_{q+3}^{\frac{3}{4}\gamma},
		\end{equation}
		where $M$ is the universal constant in \eqref{A5}. Taking expectation on both sides of \eqref{wp00}, we use \eqref{vqee} to obtain
		\begin{equation}\label{Ewp0}
			\aligned
			\$ w^{(p)}_{q+1}\$_{C^0,2r} \lesssim 2|\Lambda_j| c_*^{-\frac{1}{2}}M (\$\mathring{R}_q\$_{C^0,r}^{\frac{1}{2}}+\ell^\frac{1}{2}) +2|\Lambda_j| M\bar{e} \delta_{q+1}^\frac12 \leq 
			\frac{1}{4}\bar{M}\delta_{q+1}^{\frac{1}{2}},
			\endaligned
		\end{equation}
		where $\bar{M}$ is a universal constant satisfying $9c_*^{-\frac12}M|\Lambda_j|+8M\bar{e}|\Lambda_j|<\bar{M}$ and we choose $a$ sufficiently large to absorb the constant in the above two inequalities. Using \eqref{Phijb}, \eqref{axi012} and \eqref{E1}, we obtain 
		\begin{equation}\label{wptx1}
			\aligned
			\| w^{(p)}_{q+1}\|_{C^1_{t,x}} 
			& \lesssim \sup_j\sum_{\xi\in\Lambda_j}\|a_{(\xi)}\|_{C^1_{t,x}}+\lambda_{q+1}\|a_{(\xi)}\|_{C^0_{t,x}}\left(\|\partial_t \Phi_{k,j}\|_{C^0}+\|\nabla \Phi_{k,j}\|_{C^0 }\right)
			\\&\lesssim \lambda_q^4\lambda_{q+3}^{2\gamma}+\lambda_{q+3}^{2\gamma}\lambda_{q+1}\leq \frac{1}{4} \lambda_{q+1}^{\frac{3}{2}}\delta_{q+1}^{\frac{1}{2}},
			\endaligned
		\end{equation}
		where we used $b\geq7$ and $2b^3\gamma+b\beta<1$ to have $\lambda_{q}^{2b^3\gamma+4}<\lambda_{q}^{\frac32b-b\beta}$ and $\lambda_{q}^{2b^3\gamma}<\lambda_{q}^{\frac{1}{2}b-b\beta{\tiny }}$ in the last inequality. We also chose $a$ sufficiently large to absorb the constant.

	We next estimate $w^{(c)}_{q+1}$. Using a standard mollification estimate, \eqref{Phija} and \eqref{axi012} we obtain
		\begin{equation}\label{wc0'}
			\aligned
			\|w^{(c)}_{q+1}\|_{C^0_{t,x}} &\lesssim \sup_j\sum_{\xi\in\Lambda_j}\left( \frac{\|\nabla a_{(\xi)}\|_{C^0_{t,x}}}{\lambda_{q+1}} +\|a_{(\xi)}\|_{C^0_{t,x}}\cdot \|\nabla \Phi_{k,j}-\Id\|_{C^0} \right)
			\\&\lesssim \frac{\lambda_q^4\lambda_{q+3}^{2\gamma}}{\lambda_{q+1}}+\lambda_{q+3}^{\gamma}\lambda_q^{-\frac{1}{2}}\leq \frac{1}{4} \delta_{q+1}^{\frac{1}{2}}, 
			\endaligned
		\end{equation}
		where we used $b\geq7$ and $2b^3\gamma+2b\beta<1$ to have $\lambda_{q}^{b^3\gamma}<\lambda_{q}^{\frac12-b\beta}$ and $\lambda_{q}^{4+2b^3\gamma}<\lambda_{q}^{b-b\beta}$ in the last inequality, and $a$ was chosen sufficiently large to absorb the constant. Taking expectation we obtain
		\begin{equation}\label{Ewc0}
			\$w_{q+1}^{(c)}\$_{C^0,2r}\leq \frac{1}{4}\delta_{q+1}^\frac{1}{2}.
		\end{equation}
		Using \eqref{Phija}, \eqref{Phijb}, \eqref{axiN}, \eqref{axi012}, \eqref{wc0'} and estimates \eqref{eq:Phin}--\eqref{E2} for $\Phi_{k,j}$, we obtain 
		\begin{equation} \label{wctx1}
			\aligned
			\|w^{(c)}_{q+1}\|_{C_{t,x}^1}
			\lesssim& \lambda_{q+1}\|w^{(c)}_{q+1}\|_{C^0_{t,x}}\sup_j (\|\partial_t
			\Phi_{k,j}\|_{C^0}+\|\nabla \Phi_{k,j}\|_{C^0})
			\\ &+\sup_j \sum_{\xi \in \Lambda_j} \bigg[
			\frac{\|\nabla a_{(\xi)}\|_{C^1_{t,x}}}{\lambda_{q+1}}
			+\|a_{(\xi)}\|_{C^1_{t,x}}\|\nabla \Phi_{k,j}-\Id\|_{C^0}
			\\ & \qquad \qquad \quad+ \| a_{(\xi)}\|_{C^0_{t,x}}(\|\nabla^2 \Phi_{k,j}\|_{C^0}+\|\partial_t \nabla\Phi_{k,j}\|_{C^0} )\bigg]
			\\ \lesssim& \lambda_{q+1}\lambda_{q+3}^{\gamma}\delta_{q+1}^{\frac{1}{2}} + \frac{\lambda_q^9\lambda_{q+3}^{3\gamma}}{\lambda_{q+1}} + \lambda_q^4\lambda_{q+3}^{2\gamma} \lambda_{q}^{-\frac{1}{2}}+ \lambda_{q+3}^{\gamma}(\ell^{-\frac34}+\lambda_{q}^{\frac32}\lambda_{q+3}^{\gamma})
			\\ \leq& \lambda_{q+1}\lambda_{q+3}^{\gamma}\delta_{q+1}^{\frac{1}{2}} +\lambda_q^4\lambda_{q+3}^{2\gamma}+ 
			\frac{\lambda_q^9\lambda_{q+3}^{3\gamma}}{\lambda_{q+1}} \leq \frac{1}{4}\lambda_{q+1}^{\frac{3}{2}}\delta_{q+1}^{\frac{1}{2}},
			\endaligned
		\end{equation}
		where we used $b\geq7$ and $2b\gamma^3+b\beta<1$ to have $\lambda_{q}^{b^3\gamma}<\lambda_{q}^\frac{b}{2}$, $\lambda_{q}^{4+2b^3\gamma}<\lambda_{q}^{\frac{3}{2}b-b\beta}$ and $\lambda_{q}^{9+3b^3\gamma}<\lambda_{q}^{\frac{5}{2}b-b\beta}$ in the last inequality, and $a$ was chosen sufficiently large to absorb the constant. 
		
		We use a standard mollification estimate and inductive assumption \eqref{vqcc} to obtain for any $t\in \R$
		\begin{equation}\label{vq-vl1}
			\|(v_q-v_\ell)(t)\|_{L^\infty} \lesssim \ell \|v_q\|_{C_{[t-1,t],x}^1} \leq \ell \lambda_q^{\frac{3}{2}} \delta_q^{\frac{1}{2}} \leq \frac14   \delta_{q+1}^{\frac{1}{2}},
		\end{equation}
		where we used $b\beta<1$ and chose $a$ sufficiently large to  absorb the constant. Taking expectation we obtain
		\begin{equation}\label{vl-vq}
			\$v_{\ell}-v_q\$_{C^0,2r}\leq\frac{1}{4}\delta_{q+1}^{\frac{1}{2}}.
		\end{equation}
	
	Combining \eqref{Ewp0}, \eqref{Ewc0} and \eqref{vl-vq} we obtain
	\begin{equation*}
		\aligned
		\$v_{q+1}-v_q\$_{C^0,2r}\leq \$v_\ell-v_q\$_{C^0,2r}+\$w_{q+1}\$_{C^0,2r} \leq \bar{M}\delta_{q+1}^{\frac{1}{2}}.
		\endaligned
	\end{equation*}
	Hence, \eqref{vq+1-vq} holds at the level $q+1$. 
	Moreover, it follows from \eqref{vqaa}, \eqref{wp00} and \eqref{wc0'} that
	\begin{align}\label{vq+10}
		\|v_{q+1}\|_{C^0_{t,x}}\leq \|v_\ell \|_{C^0_{t,x}}+\|w_{q+1}^{(p)}\|_{C^0_{t,x}}+\|w_{q+1}^{(c)}\|_{C^0_{t,x}}\leq \lambda_{q+2}^{\gamma}+\lambda_{q+3}^{\frac{3}{4}\gamma}+\delta_{q+1}^{\frac{1}{2}}\leq \lambda_{q+3}^{\gamma}.
	\end{align}
	By \eqref{vqcc}, \eqref{wptx1} and \eqref{wctx1} we obtain
	\begin{align*}
		\|v_{q+1}\|_{C^1_{t,x}}\leq \|v_\ell \|_{C^1_{t,x}}+\|w_{q+1}^{(p)}\|_{C^1_{t,x}}+\|w_{q+1}^{(c)}\|_{C^1_{t,x}}\leq \lambda^{\frac{3}{2}}_{q}\delta^{\frac{1}{2}}_{q}+\frac12 \lambda^{\frac{3}{2}}_{q+1}\delta^{\frac{1}{2}}_{q+1}\leq \lambda^{\frac{3}{2}}_{q+1}\delta^{\frac{1}{2}}_{q+1}.
	\end{align*}
	By \eqref{vqbb}, \eqref{Ewp0} and \eqref{Ewc0} we obtain 
	\begin{equation}\label{vq+1}
		\$v_{q+1}\$_{C^0,2r}\leq\$v_q\$_{C^0,2r}+\$w^{(p)}_{q+1}\$_{C^0,2r}+\$w^{(c)}_{q+1}\$_{C^0,2r}\leq \lambda_q^{\beta}+\bar{M}\delta_{q+1}^{\frac{1}{2}}\leq\lambda_{q+1}^{\beta}.
	\end{equation}
	Therefore, we have verified that \eqref{vqaa}, \eqref{vqbb} and \eqref{vqcc} hold at the level $q+1$.

	\subsection{Inductive estimates for $\mathring{R}_{q+1}$}\label{sss:R}
	In this section, we shall verify \eqref{vqdd}, \eqref{vqee} and \eqref{vqf} hold at the level $q+1$.  We first proceed with estimates of $R_{\mathrm{commutator1}}$ in subsection~\ref{rcom}. Then we recall a stationary phase lemma in subsection~\ref{spl}. Finally, the remaining errors are controlled in subsection~\ref{462}--\ref{465}.
	\subsubsection{Estimate on $R_{\mathrm{commutator1}}$}\label{rcom}
	First, by the definition of $z_q$ in \eqref{def z}, we have
	\begin{align}\label{zq0}
		\|z_q\|_{C^0_{t,x}}\leq \lambda_{q+3}^{\frac{\gamma}{4}}.
	\end{align}
	Then we use the pathwise estimates \eqref{vq+10} and \eqref{zq0} to obtain the following pathwise bounds
	\begin{equation*}
		\aligned
		\|v_{q+1}\mathring \otimes (z_{q+1}-z_{\ell})\|_{C^0_{t,x}} &\leq \|v_{q+1}\|_{C^0_{t,x}}\| z_{q+1}-z_{\ell}\|_{C^0_{t,x}}\leq \lambda_{q+3}^{\gamma}\lambda_{q+4}^{\frac{\gamma}{4}}+\lambda_{q+3}^{\frac{5}{4}\gamma}\leq \frac{1}{5}\lambda_{q+4}^{\gamma},
		\\ \|z_{q+1}\mathring \otimes z_{q+1}-z_{\ell}\mathring \otimes z_{\ell}\|_{C^0_{t,x}}&\leq\|z_{q+1}\|^2_{C^0_{t,x}}+\|z_{\ell}\|^2_{C^0_{t,x}}\leq\lambda_{q+4}^{\frac{\gamma}{2}}+\lambda_{q+3}^{\frac{\gamma}{2}}\leq \frac{1}{5}\lambda_{q+4}^{\gamma},
		\\\|\mathcal{R}(z_{q+1}-z_{\ell})\|_{C^0_{t,x}}&\leq \|z_{q+1}-z_{\ell}\|_{C^0_{t,x}}\leq  \lambda_{q+4}^{\frac{\gamma}{4}}+\lambda_{q+3}^{\frac{\gamma}{4}}\leq \frac{1}{5}\lambda_{q+4}^{\gamma},
		\endaligned
	\end{equation*}    
	where we chose $a$ sufficiently large to absorb the constant in the above. Summing over the estimates above, the following pathwise bound holds for any $t\in \R$
	\begin{equation}\label{Rcom1tx0}
		\aligned
		\|R_{\textrm{commutator1}}\|_{C^0_{t,x}}\leq 2	\|v_{q+1}\mathring \otimes (z_{q+1}-z_{\ell})\|_{C^0_{t,x}}
		+  \|z_{q+1}&\mathring \otimes z_{q+1}-z_{\ell}\mathring \otimes z_{\ell}\|_{C^0_{t,x}}
		\\&+ \|\mathcal{R}(z_{q+1}-z_{\ell})\|_{C^0_{t,x}}\leq\frac{4}{5}\lambda_{q+4}^\gamma.
		\endaligned
	\end{equation}
	Next we estimate the $\$\cdot\$_{C^0,r}$-norm of $R_{\textrm{commutator1}}$, and we first give an estimate of $z_{q+1}-z_q$.
	\begin{lemma}\label{3.4}
		For any $p\geq1$, we have
		\begin{align}\label{z-p}
		\$z_{q+1}-z_q\$_{C^0,p} \lesssim (\lambda_{q+1}^{-\frac{\gamma\sigma}{8}}+ \lambda_{q+2}^{-\gamma} \lambda_{q+1}^{\frac{\gamma}{8}})p.
		\end{align}
		In particular, for the fixed $r>1$ in Proposition~\ref{p:iteration1}, we have 
		\begin{align}\label{z'}
			\$z_{q+1}-z_q\$_{C^0,2r}\lesssim( \lambda_{q+1}^{-\frac{\gamma\sigma}{8}}+ \lambda_{q+2}^{-\gamma} \lambda_{q+1}^{\frac{\gamma}{8}})r,
		\end{align}
		where the implicit constants are independent of $q$.
	\end{lemma}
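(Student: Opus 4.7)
The plan is to decompose $z_{q+1} - z_q$ into two pieces: one isolating the Fourier-projection increment, and one isolating the jump of the cutoff. Writing $a_q(s) := \|\tilde{z}_q(s)\|_{C^1_x}$ for brevity, I would use
\begin{align*}
z_{q+1} - z_q = \chi_{q+1}(a_{q+1})(\tilde{z}_{q+1} - \tilde{z}_q) + \bigl(\chi_{q+1}(a_{q+1}) - \chi_q(a_q)\bigr)\tilde{z}_q
\end{align*}
and estimate each summand separately in $\$\cdot\$_{C^0,p}$.

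For the first summand, since $|\chi_{q+1}| \leq 1$ it suffices to control $\|\tilde{z}_{q+1} - \tilde{z}_q\|_{C^0}$. The difference $\tilde{z}_{q+1} - \tilde{z}_q = \mathbb{P}_{f(q) < |\cdot| \leq f(q+1)} z$ is spectrally localized to frequencies above $f(q) = \lambda_{q+1}^{\gamma/8}$, so a standard Bernstein argument together with the Sobolev embedding $H^{3/2+\kappa} \hookrightarrow C^0$ yields
\begin{align*}
\|\tilde{z}_{q+1} - \tilde{z}_q\|_{C^0} \lesssim f(q)^{-(\sigma - \kappa)} \|z\|_{H^{3/2+\sigma}}
\end{align*}
for any $0 < \kappa < \sigma$. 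Applying Proposition~\ref{eq:Pro1} then delivers a contribution of order $p^{1/2} L \lambda_{q+1}^{-\gamma\sigma/8}$ (after absorbing $\kappa$ into the implicit constant by slight relabelling of $\sigma$), which is dominated by the first summand in the target.

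For the cutoff piece, the key pointwise observation is that $\chi_q \equiv 1$ on $[0, \lambda_{q+2}^\gamma]$, so $1 - \chi_q(x) \leq \mathbf{1}_{\{x > \lambda_{q+2}^\gamma\}}$, giving
\begin{align*}
|\chi_{q+1}(a_{q+1}) - \chi_q(a_q)| \leq \mathbf{1}_{\{a_{q+1} > \lambda_{q+3}^\gamma\}} + \mathbf{1}_{\{a_q > \lambda_{q+2}^\gamma\}}.
\end{align*}
I would then use Bernstein in the form $a_q \lesssim f(q) \|z\|_{C^0}$ to reduce each indicator to an event of the form $\{\sup_{s\in[t,t+1]}\|z(s)\|_{C^0} > c\, \lambda_{q+2}^\gamma f(q)^{-1}\}$, whose probability is tiny given the parameter choices from Section~\ref{311'}. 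Combining Chebyshev's inequality with exponent $M = 2p$, the Gaussian moment bound from Proposition~\ref{eq:Pro1} (so that $\mathbb{E}(\sup_s\|z\|_{C^0})^M \lesssim M^{M/2} L^M$), and Cauchy--Schwarz to separate probability from moments, I would obtain
\begin{align*}
\mathbb{E}\Bigl[\sup_{s\in[t,t+1]} |\tilde{z}_q(s)|^p \mathbf{1}_{\{\sup_s a_q > \lambda_{q+2}^\gamma\}}\Bigr] \lesssim (2p)^p L^{2p} \bigl(\lambda_{q+2}^{-\gamma} \lambda_{q+1}^{\gamma/8}\bigr)^p,
\end{align*}
plus a strictly smaller contribution from the $a_{q+1}$-indicator. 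Taking the $p$-th root produces the second summand in the target bound.

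The main technical point is the calibration of the Chebyshev exponent against the Gaussian moment growth: the choice $M = 2p$ is exactly what converts the $p^{p/2}$ factor coming from Cauchy--Schwarz into an overall linear factor $p$ on the right-hand side, rather than an inflated power such as $p^{3/2}$. Specializing to $p = 2r$ and combining the two contributions then yields both \eqref{z-p} and \eqref{z'}; the fact that the implicit constants are $q$-independent follows because every step uses only the uniform estimate from Proposition~\ref{eq:Pro1} and Bernstein bounds that depend on $q$ solely through the explicit powers of $\lambda_{q+1}$ and $\lambda_{q+2}$ retained in the statement.
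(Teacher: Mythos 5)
Your decomposition of $z_{q+1}-z_q$, the indicator bound for the cutoff difference, the Cauchy--Schwarz/Chebyshev pairing at exponent $2p$, and the calibration of the Gaussian moments coincide with the paper's own proof, so this is essentially the same argument. The one loose end is in the Fourier-localized piece: passing through the embedding $H^{3/2+\kappa}\hookrightarrow C^0$ produces the rate $f(q)^{-(\sigma-\kappa)}$ with a constant that blows up as $\kappa\to0$, and since $\lambda_{q+1}^{-\gamma(\sigma-\kappa)/8}$ decays \emph{more slowly} than $\lambda_{q+1}^{-\gamma\sigma/8}$, the loss cannot be ``absorbed by relabelling $\sigma$''; to obtain the rate $f(q)^{-\sigma}$ exactly as stated, do as the paper does and apply Cauchy--Schwarz directly to the Fourier sum against the tail $\sum_{|k|>f(q)}(1+|k|^2)^{-(3/2+\sigma)}\lesssim f(q)^{-2\sigma}$, which achieves the full $\sigma$-gain with a $\sigma$-dependent but $\kappa$-free constant. (Alternatively one could keep your version and simply rerun the parameter constraints with $\sigma$ replaced by a slightly smaller value, but then the lemma statement should reflect the weaker exponent.)
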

	\begin{proof}
		By the definition of $z_q$ in \eqref{def z}, we deduce for any $p\geq1$
		\begin{equation*}
			\aligned
			\$z_{q+1}-z_q\$_{C^0,p} &=\sup_{t\in \mathbb{R}}\mathbf{E}\left(\sup_{s\in[t,t+1]}\|z_{q+1}(s)-z_q(s)\|_{C_x^0}^{p}\right)^{\frac{1}{p}}
			\\ & \leq \sup_{t\in \mathbb{R}}\mathbf{E}\left(\sup_{s\in[t,t+1]}\|\tilde{z}_{q+1}(s)-\tilde{z}_q(s)\|_{C_x^0}^{p} \cdot \chi_{q+1}(\|\tilde{z}_{q+1}(s)\|_{C_x^1})^{p} \right)^{\frac{1}{p}}
			\\& +\sup_{t\in \mathbb{R}} \mathbf{E} \left(\sup_{s\in[t,t+1]} \|\tilde{z}_q(s)\|_{C_x^0}^{p}  \cdot \left|\chi_{q+1}(\|\tilde{z}_{q+1}(s)\|_{C_x^1})- \chi_q(\|\tilde{z}_q(s)\|_{C_x^1})\right|^{p} \right)^\frac{1}{p}
			\\ & =:  \mathrm{\Rmnum{1}+\Rmnum{2}}.
			\endaligned
		\end{equation*}
		We first estimate $\mathrm{\Rmnum{1}}$. Recalling $\tilde{z}_q=\mathbb{P}_{\leq f(q)}z$, we have 
		\begin{align*}
			\tilde{z}_{q+1}(t,x)-\tilde{z}_q(t,x)=\sum_{\lambda_{q+1}^{\gamma/8}< |k|\leq\lambda_{q+2}^{\gamma/8}} e^{ik\cdot x}\hat{z}(t,k),
		\end{align*}
		where $\hat{z}$ is the Fourier transform of $z$, and $k\in \mathbb{Z}^3$. Then using H\"{o}lder's inequality, we deduce for any $t\in \mR$ and previous $\sigma>0$, 
		\begin{align*}
			\|\tilde{z}_{q+1}(t)-\tilde{z}_q(t)\|_{L^\infty} &\leq  \sum_{\lambda_{q+1}^{\gamma/8}< |k|\leq\lambda_{q+2}^{\gamma/8}}  |\hat{z}(t,k)| \\&=\sum_{\lambda_{q+1}^{\gamma/8}< |k|\leq\lambda_{q+2}^{\gamma/8}} (1+|k|^2)^{\frac12(\frac32+\sigma)} |\hat{z}(t,k)| (1+|k|^2)^{-\frac12(\frac32+\sigma)}
			\\ &\leq \left(\sum_{\lambda_{q+1}^{\gamma/8}< |k|\leq\lambda_{q+2}^{\gamma/8}}  (1+|k|^2)^{\frac32+\sigma} |\hat{z}(t,k)|^2 \right) ^\frac12  \left(\sum_{\lambda_{q+1}^{\gamma/8}< |k|\leq\lambda_{q+2}^{\gamma/8}}  (1+|k|^2)^{-(\frac32+\sigma)}  \right) ^\frac12 
			\\&\lesssim\lambda_{q+1}^{-\frac{\gamma\sigma}{8}} \|z(t)\|_{H^{\frac{3}{2}+\sigma}}.
		\end{align*}
		Taking expectation and using \eqref{estimate-zq} we obtain the bound
		\begin{equation}\label{R1}
			\aligned
			\mathrm{\Rmnum{1}}\leq & \sup_{t\in \mathbb{R}}\mathbf{E}\left(\sup_{s\in[t,t+1]}\|\tilde{z}_{q+1}(s)-\tilde{z}_q(s)\|_{C_x^0}^{p}\right)^\frac{1}{p} 
			\\ \lesssim &\lambda_{q+1}^{-\frac{\gamma\sigma}{8}}\sup_{t\in \mathbb{R}}\mathbf{E}\left(\sup_{s\in[t,t+1]}\|z(s)\|_{H^{\frac32+\sigma}}^{p}\right)^\frac{1}{p} \lesssim \lambda_{q+1}^{-\frac{\gamma\sigma}{8}}\$z\$_{H^{\frac32+\sigma},2p}
			\leq \lambda_{q+1}^{-\frac{\gamma\sigma}{8}} \sqrt{2p}L.
			\endaligned
		\end{equation} 
		We next estimate $\mathrm{\Rmnum{2}}$, by the definition of $\chi_{q}$ and $\chi_{q+1}$,  we deduce that 
		\begin{align*}
			| \chi_{q+1}&(\|\tilde{z}_{q+1}(s)\|_{C_x^1})- \chi_q(\|\tilde{z}_q(s)\|_{C_x^1})|
			\\ \leq &
			\mathbf{1}_{\{ \|\tilde{z}_q(s)\|_{C_x^1}\leq \lambda_{q+2}^{\gamma},\ \|\tilde{z}_{q+1}(s)\|_{C_x^1} > \lambda_{q+3}^{\gamma}\} } 
			+ \mathbf{1}_{\{\|\tilde{z}_q(s)\|_{C_x^1}> \lambda_{q+2}^{\gamma} \} } 
			\\ \leq& \mathbf{1}_{\{ \|\tilde{z}_{q+1}(s)\|_{C_x^1} > \lambda_{q+3}^{\gamma} \}} +  \mathbf{1}_{\{ \|\tilde{z}_q(s)\|_{C_x^1} > \lambda_{q+2}^{\gamma} \} }.
		\end{align*}
		Using H\"{o}lder's inequality, we obtain
		\begin{align*}
			\mathrm{\Rmnum{2}} 
			\leq \sup_{t\in \mathbb{R}} \mathbf{E} &\left(\sup_{s\in[t,t+1]} \|\tilde{z}_q(s)\|_{C_x^0}^{p}  \cdot \mathbf{1}_{\{ \|\tilde{z}_{q+1}(s)\|_{C_x^1} > \lambda_{q+3}^{\gamma} \}} \right)^\frac{1}{p}
			\\ &+
			\sup_{t\in \mathbb{R}} \mathbf{E} \left(\sup_{s\in[t,t+1]} \|\tilde{z}_q(s)\|_{C_x^0}^{p}  \cdot  \mathbf{1}_{\{ \|\tilde{z}_q(s)\|_{C_x^1} > \lambda_{q+2}^{\gamma} \} } \right)^\frac{1}{p}
			\\ \leq \sup_{t\in \mathbb{R}} \mathbf{E} &\left( \sup_{s\in[t,t+1]} \|\tilde{z}_q(s)\|_{C_x^0}^{2p}  \right)^{\frac{1}{2p}}  \mathbf{P}\left( \|\tilde{z}_{q+1}\|_{C^0_{[t,t+1]}C^1_x}> \lambda_{q+3                                                                                                                                                                                                                                                                                                                                                                                                                                                                                                                                                                                                                                                                                                                                                                                                                                                                                                                                                                                                                                                                                                                                                                                                                                                                                                                                                                                                                                                                                                                                                                                                                                                                                                                                                                                                                                                                                                                      }^{\gamma} \right)^{\frac{1}{2p}} 
			\\ &+\sup_{t\in \mathbb{R}} \mathbf{E}\left( \sup_{s\in[t,t+1]} \|\tilde{z}_q(s)\|_{C_x^0}^{2p}  \right)^{\frac{1}{2p}}  \mathbf{P}\left( \|\tilde{z}_q\|_{C^0_{[t,t+1]}C^1_x} > \lambda_{q+2}^{\gamma} \right)^{\frac{1}{2p}}. 
		\end{align*}
		By Chebyshev's inequality and \eqref{estimate-zq}, we have the following bound
		\begin{align*}
			\sup_{t\in \mathbb{R}} \mathbf{P}\left(\|\tilde{z}_q\|_{C^0_{[t,t+1]}C^1_x}>\lambda_{q+2}^{\gamma}\right)^{\frac{1}{2p}}
			&\leq \lambda_{q+2}^{-\gamma} \sup_{t\in \mathbb{R}}\left( \mathbf{E}\|\tilde{z}_q\|^{2p}_{C_{[t,t+1]}^0C_x^1}\right)^\frac{1}{2p}
			\\ & \lesssim
			\lambda_{q+2}^{-\gamma} \lambda_{q+1}^{\frac{\gamma}{8}} \$z\$_{H^{\frac32+\sigma},2p}\leq \sqrt{2p}\lambda_{q+2}^{-\gamma} \lambda_{q+1}^{\frac{\gamma}{8}}L.
		\end{align*}
		Similarly as above, we have 
		\begin{equation*} 
			\aligned
			\sup_{t\in \mathbb{R}}   \mathbf{P}\left(\|\tilde{z}_{q+1}\|_{C^0_{[t,t+1]}C^1_x}> \lambda_{q+3}^{\gamma}\right)^{\frac{1}{2p}}
			& \leq \lambda_{q+3}^{-\gamma} \sup_{t\in \mathbb{R}}\left( \mathbf{E}\|\tilde{z}_{q+1}\|^{2p}_{C_{[t,t+1]}^0C_x^1}\right)^\frac{1}{2p} 
			\\ & \lesssim
			\lambda_{q+3}^{-\gamma} \lambda_{q+2}^{\frac{\gamma}{8}} \$z\$_{H^{\frac32+\sigma},2p}\leq \sqrt{2p} \lambda_{q+3}^{-\gamma} \lambda_{q+2}^{\frac{\gamma}{8}}L.
			\endaligned
		\end{equation*}
		With the above estimates in hand, using \eqref{estimate-zq} again, we obtain 
		\begin{equation}\label{R2}
			\aligned
			\mathrm{\Rmnum{2}}& \leq  \$\tilde{z}_q\$_{C^0,2p}( \sqrt{2p}\lambda_{q+3}^{-\gamma} \lambda_{q+2}^{\frac{\gamma}{8}}L+ \sqrt{2p}\lambda_{q+2}^{-\gamma} \lambda_{q+1}^{\frac{\gamma}{8}}L)
			\lesssim pL^2\lambda_{q+2}^{-\gamma} \lambda_{q+1}^{\frac{\gamma}{8}},
			\endaligned
		\end{equation}
		where we used $9b\gamma \leq 8b^2\gamma+\gamma$ to have $\lambda_{q+3}^{-\gamma} \lambda_{q+2}^{\gamma/8}\leq \lambda_{q+2}^{-\gamma} \lambda_{q+1}^{\gamma/8}$.
		Finally, combining \eqref{R1} with \eqref{R2} implies 
		\begin{align*}
			\$z_{q+1}-z_q\$_{C^0,p}=\mathrm{\Rmnum{1}+\Rmnum{2}}\lesssim( \lambda_{q+1}^{-\frac{\gamma\sigma}{8}}+ \lambda_{q+2}^{-\gamma} \lambda_{q+1}^{\frac{\gamma}{8}})pL^2.
		\end{align*}
		In particular, taking $p=2r$ impies \eqref{z-p}. Hence, the proof of Lemma~\ref{3.4} is complete.
	\end{proof}
	
	Next, we estimate $\$R_{\textrm{commutator1}}\$_{C^0,r}$. Using \eqref{vq+1}, \eqref{z'} and H\"{o}lder's inequality, we have 
	\begin{equation*}\label{C11}
		\$v_{q+1} \mathring{\otimes} (z_{q+1}-z_q)\$_{C^0,r} \leq \$v_{q+1}\$_{C^0,p}\$z_{q+1}-z_q\$_{C^0,2r} \lesssim \lambda_{q+1}^{\beta}(\lambda_{q+1}^{-\frac{\gamma\sigma}{8}}+ \lambda_{q+2}^{-\gamma} \lambda_{q+1}^{\frac{\gamma}{8}}).
	\end{equation*}
	Combining \eqref{zh5/2}, \eqref{estimate-zq} with \eqref{z'} and using H\"{o}lder's inequality we obtain
	\begin{equation*}\label{C12}
		\aligned
		\$z_{q+1}\mathring \otimes (z_{q+1}-z_q)\$_{C^0,r} &\leq
		\$z_{q+1}\$_{C^0,2r}\$z_{q+1}-z_q\$_{C^0,2r} 
		\\ &\lesssim \$z\$_{H^{\frac32+\sigma},2r} \$z_{q+1}-z_q\$_{C^0,2r} 
		\lesssim\lambda_{q+1}^{-\frac{\gamma\sigma}{8}}+ \lambda_{q+2}^{-\gamma} \lambda_{q+1}^{\frac{\gamma}{8}}.
		\endaligned
	\end{equation*}
	We use a standard mollification estimate to have for any $\delta \in (0,\frac{1}{12})$
	\begin{align*}
		\|z_\ell(t)-z_q(t)\|_{L^\infty}\lesssim \ell\|z_q\|_{C^0_{[t-1,t]}C^1_x}+\ell^{\frac12-\delta}\|z_q\|_{C^{1/2-\delta}_{[t-1,t]}C^0_x}.
	\end{align*}
	Taking expectation on both sides and using \eqref{estimate-zq}, we deduce
	\begin{equation}\label{zl-zq}
		\aligned
		\$z_\ell-z_q\$_{C^0,2r} &\lesssim \ell \sup_{t\in \mathbb{R}}\left( \mathbf{E}\|z_q\|^{2r}_{C_{[t-1,t+1]}^0C_x^1}\right)^\frac{1}{2r}+ \ell^{\frac{1}{2}-\delta} \sup_{t\in \mathbb{R}}\left( \mathbf{E}\|z_q\|^{2r}_{C_{[t-1,t+1]}^{1/2-\delta}C_x^0}\right)^\frac{1}{2r}
		\\&\lesssim \ell^{\frac{5}{12}}\lambda_{q+1}^{\frac{\gamma}{8}} \sup_{t\in \mathbb{R}}\left( \mathbf{E}\|z\|^{2r}_{C_{[t-1,t+1]}^{1/2-\delta}H^{\frac32+\sigma}}\right)^\frac{1}{2r}\lesssim \lambda_{q+1}^{\gamma}\lambda_{q}^{-\frac{3}{4}}\sqrt{2r}L.
		\endaligned
	\end{equation} 
	Hence, using \eqref{vq+1}, \eqref{zl-zq} and H\"{o}lder's inequality, we have
	\begin{equation*}
		\$v_{q+1}\mathring \otimes (z_\ell-z_q)\$_{C^0,r} \leq \$v_{q+1}\$_{C^0,2r} \$z_\ell-z_q\$_{C^0,2r}
		\lesssim \lambda_{q+1}^{\beta+\gamma}\lambda_{q}^{-\frac{3}{4}}.
	\end{equation*}
	Similarly, we use \eqref{estimate-zq} and \eqref{zl-zq} again to deduce
	\begin{align*}\label{vz11}
		\$z_{q+1}\mathring \otimes (z_\ell-z_q)\$_{C^0,r} &\leq \$z_{q+1}\$_{C^0,2r} \$z_\ell-z_q\$_{C^0,2r} 
		\\ &\lesssim    \$z\$_{H^{\frac32+\sigma},2r}\$z_\ell-z_q\$_{C^0,2r} 
		\lesssim \lambda_{q+1}^{\gamma}\lambda_{q}^{-\frac{3}{4}}.
	\end{align*}
	Summarizing the estimates above, we obtain
	\begin{equation}\label{esti:Rcom1}
		\aligned
		\$R_{\textrm{commutator1}}\$_{C^0,r}&\leq  \$v_{q+1}\mathring \otimes (z_\ell-z_q)\$_{C^0,r} +  \$v_{q+1}\mathring \otimes (z_{q+1}-z_q)\$_{C^0,r}+\$z_{\ell}-z_q\$_{C^0,r}
		\\&+\$z_{q+1}\mathring \otimes (z_\ell-z_q)\$_{C^0,r}+ \$z_{q+1}\mathring \otimes (z_{q+1}-z_q)\$_{C^0,r}+ \$ z_{q+1}-z_q\$_{C^0,r}
		\\&\lesssim \lambda_{q+1}^{\beta+\gamma}\lambda_{q}^{-\frac{3}{4}}+\lambda_{q+1}^{\gamma}\lambda_{q}^{-\frac{3}{4}}+\lambda_{q+1}^{\beta}(\lambda_{q+1}^{-\frac{\gamma\sigma}{8}}+ \lambda_{q+2}^{-\gamma} \lambda_{q+1}^{\frac{\gamma}{8}}) 
		\\&\leq \frac{1}{48}c_R\delta_{q+3}\leq \frac{1}{6}\delta_{q+2},
		\endaligned
	\end{equation}
	and \begin{align*}
		\$ R_{\textrm{commutator1}}\$_{L^1,1}\leq \frac{1}{48}c_R\delta_{q+3}\underline{e},
	\end{align*}
	where we used $2b^2\beta+\beta<\frac{\gamma\sigma}{8}$ to have $\lambda_{q}^{b\beta-\frac{b}{8}\gamma\sigma}<\lambda_{q}^{-2b^3\beta}$. We also used $3b\beta<\gamma$, $4b^3\beta+2b\gamma<1$ to have  $\lambda_{q}^{b\beta+b\gamma-b^2\gamma}<\lambda_{q}^{-2b^3\beta}$ and  $\lambda_{q}^{b\gamma+b\beta-\frac34}<\lambda_{q}^{-2b^3\beta}$ in the above two inequalities and chose $a$ sufficiently large to absorb the constant.
	
	\subsubsection{ Stationary phase Lemma}\label{spl}
	Before we estimate the remaining terms in \eqref{Reynold1}, we first introduce the following lemma which makes rigorous the fact that $\mathcal{R}$ obeys the same elliptic regularity estimates as $|\nabla|^{-1}$. We recall the following stationary phase lemma (see for example \cite[Lemma 5.7]{BV19} and \cite [Lemma 2.2]{DS17}), adapted to our setting. 
	\begin{lemma}\label{lemma1'}
		Given $\xi\in \mathbb{S}^2\cap \mathbb{Q}^3$, let $\lambda \xi \in \mathbb{Z}^3$ and $\alpha \in (0,1)$. Assume that $a\in C^{m,\alpha}(\mathbb{T}^3)$ and $\Phi\in C^{m,\alpha}(\mathbb{T}^3;\mathbb{R}^3)$ are
		smooth functions such that the phase function $\Phi$ obeys 
	   \begin{align*}
	   	C^{-1}\leq |\nabla \Phi |\leq C
	   \end{align*}
		on $\mathbb{T}^3$, for some constant $C\geq 1$. Then, with the inverse divergence operator $\mathcal{R}$ defined in Section~\ref{s:2.1} we have
		for any $m\in \mathbb{N}$
		\begin{align*}
			\left| \int_{\mathbb{T}^3} a(x)e^{i\lambda \xi \cdot \Phi(x)} \dif x\right| \lesssim \frac{\|a\|_{C^m}+\|a\|_{C^0} \|\nabla \Phi\|_{C^m}}{\lambda^m},
		\end{align*}
		$$\left\|\mathcal{R}\left(a(x)e^{i\lambda \xi\cdot \Phi(x)}\right)\right\|_{C^\alpha}\lesssim \frac{\|a\|_{C^0}}{\lambda^{1-\alpha}}+\frac{\|a\|_{C^{m,\alpha}}+\|a\|_{C^0}\|\nabla \Phi\|_{C^{m,\alpha}}}{{\lambda^{m-\alpha}}},$$
		where the implicit constant depends on $C,\alpha$ and m (in particular, not on the frequency $\lambda$).
	\end{lemma}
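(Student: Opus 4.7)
The plan is to derive both estimates from a common mechanism, namely iterated integration by parts exploiting the non-vanishing gradient of the phase. Set $V(x) := (\nabla \Phi(x))^{T}\xi$; the non-degeneracy hypothesis $C^{-1}\leq|\nabla \Phi|\leq C$ (which, in the applications of interest here, reflects that $\nabla \Phi$ is close to the identity, cf.\ \eqref{Phija}--\eqref{Phijb}) guarantees $|V(x)|\geq c>0$ uniformly on $\mathbb{T}^{3}$, and the vector field $\eta(x):=V(x)/|V(x)|^{2}$ satisfies $\|\eta\|_{C^{k}}\lesssim 1+\|\nabla\Phi\|_{C^{k}}$ for each $k$. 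A direct computation yields the key identity
\begin{equation*}
e^{i\lambda \xi\cdot\Phi(x)} \;=\; \frac{1}{i\lambda}\,\eta(x)\cdot\nabla e^{i\lambda\xi\cdot\Phi(x)},
\end{equation*}
so that for any smooth $f$,
\begin{equation*}
\int_{\mathbb{T}^{3}} f\,e^{i\lambda\xi\cdot\Phi}\,dx \;=\; -\frac{1}{i\lambda}\int_{\mathbb{T}^{3}} \div(f\eta)\,e^{i\lambda\xi\cdot\Phi}\,dx.
\end{equation*}

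For the first estimate I would iterate this identity $m$ times with $f=a$. Each step introduces a factor $\lambda^{-1}$ and redistributes one extra derivative via Leibniz between $a$ and $\eta$, so after $m$ iterations the integrand is a linear combination of terms of the form $D^{j}a\cdot P_{m-j}(D\eta,\dots,D^{m-j}\eta)$ for $0\leq j\leq m$. Taking absolute values and using the above bound on $\|\eta\|_{C^{k}}$ yields exactly the claimed estimate, with the extreme cases $j=m$ producing the $\|a\|_{C^{m}}$ term and $j=0$ producing the $\|a\|_{C^{0}}\|\nabla\Phi\|_{C^{m}}$ term.

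For the second estimate the strategy is to decompose $\mathcal{R}(a\,e^{i\lambda\xi\cdot\Phi})$ into a low- and a high-frequency contribution relative to the scale $\lambda$. For the low-frequency part one exploits that $\mathcal{R}$ is a Calder\'{o}n--Zygmund operator of order $-1$: applied to a function oscillating at scale $\lambda$, it enjoys a $\lambda^{-1}$ gain, offset by a $\lambda^{\alpha}$ loss when passing from $C^{0}$ to $C^{\alpha}$, which produces the leading term $\|a\|_{C^{0}}/\lambda^{1-\alpha}$. For the high-frequency remainder, apply the integration-by-parts identity $m$ times to rewrite $a\,e^{i\lambda\xi\cdot\Phi}$ as $\lambda^{-m}$ times a function whose $C^{\alpha}$ norm is controlled by $\|a\|_{C^{m,\alpha}}+\|a\|_{C^{0}}\|\nabla\Phi\|_{C^{m,\alpha}}$ via Leibniz; then the order $-1$ mapping property of $\mathcal{R}$ (together with the usual $\lambda^{\alpha}$ loss in $C^{\alpha}$) supplies the remainder with factor $\lambda^{-(m-\alpha)}$.

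The main technical obstacle is executing the $C^{\alpha}$ bound cleanly, since $\mathcal{R}$ is nonlocal and does not commute with the oscillating factor or with the auxiliary multiplication by $\eta$. A clean implementation proceeds via a Littlewood--Paley decomposition of $a\,e^{i\lambda\xi\cdot\Phi}$ around frequency $\lambda$ and uses the mapping properties of $\mathcal{R}$ on Besov spaces $B^{\alpha}_{\infty,\infty}$. In fact the statement is essentially \cite[Lemma~5.7]{BV19}, itself adapted from \cite[Lemma~2.2]{DS17}, and once one verifies that the inverse divergence operator defined in Section~\ref{s:2.1} agrees (up to manifestly harmless terms) with the one used in those references, the desired bounds follow by direct citation.
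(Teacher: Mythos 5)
The paper does not give a proof of this lemma: it is quoted directly from the literature, namely \cite[Lemma~5.7]{BV19} (which in turn goes back to \cite[Lemma~2.2]{DS17}), so the paper's ``approach'' is simply to cite. Your sketch correctly reconstructs the underlying mechanism used there: the non-stationary-phase identity $e^{i\lambda\xi\cdot\Phi}=\tfrac1{i\lambda}\,\eta\cdot\nabla e^{i\lambda\xi\cdot\Phi}$ with $\eta=(\nabla\Phi)^{T}\xi/|(\nabla\Phi)^{T}\xi|^{2}$, iterated $m$ times, gives the first bound, and a Littlewood--Paley split around frequency $\lambda$ combined with the order-$(-1)$ smoothing of $\mathcal{R}$ on $C^{\alpha}=B^{\alpha}_{\infty,\infty}$ gives the second; and you also correctly observe that the statement follows by direct citation once one checks that $\mathcal{R}$ here matches the operator in those references. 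One small point you gloss over: neither $\|\eta\|_{C^{k}}\lesssim 1+\|\nabla\Phi\|_{C^{k}}$ nor the final reduction of the Leibniz expansion to $\|a\|_{C^{m}}+\|a\|_{C^{0}}\|\nabla\Phi\|_{C^{m}}$ is a bare consequence of the product/chain rule; Fa\`a di Bruno and Leibniz produce products of intermediate derivatives (e.g.\ $\|\nabla\Phi\|_{C^{1}}^{m}$), and collapsing these to the stated form uses Gagliardo--Nirenberg/interpolation on $\mathbb{T}^{3}$. This is standard and carried out in \cite{DS17}, but it is a genuine step, not a formality.
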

	
	The above lemma is used to estimate the $\| \cdot \|_{C^0}$ and $\$ \cdot \$_{C^0,r}$-norm of the remaining terms in \eqref{Reynold1}. Indeed, for a fixed $t\in [k+(j-1)\ell,k+(j+1)\ell]$, in view of the bound \eqref{Phijb}, we have $\frac{1}{2}\leq \|\Phi_{k,j}(\cdot,t)\|_{C^0_x} \leq 2$ on $\mathbb{T}^3$. Thus, it follows from Lemma~\ref{lemma1'} that if $a$ is a smooth periodic function, then we have for any $n\in \mathbb{N}$
	\begin{equation}\label{InR}
		\left\|\mathcal{R}(aW_{(\xi)}\circ \Phi_{k,j} )\right\|_{C^0_tC^\alpha_x}  \lesssim  \frac{\|a\|_{C^0_{t,x}}}{\lambda_{q+1}^{1-\alpha}}+\frac{\|a\|_{C^0_tC^{n,\alpha}_x}+\|a\|_{C^0_{t,x}}\|\nabla \Phi_{k,j}\|_{C^0_tC^{n,\alpha}_x}}{{\lambda_{q+1}^{n-\alpha}}}.
	\end{equation} 
	
	In addition, the same proof that was used in \cite{DS17} to prove Lemma~\ref{lemma1'} gives other useful estimates. Let $\xi \in \Lambda_j$ and $\xi' \in \Lambda_{j'}$ for $|j-j'|\leq1$ be such that $\xi+\xi'\neq0$, we have that $|\xi+\xi'|\geq \bar{c}>0$, for some universal constant $\bar{c}\in (0,1)$. Then for a smooth periodic function $a$, we have the following estimates (see \cite[(5.37)]{BV19}), for $n\in \mathbb{N}$
	
	\begin{equation}\label{R1a}
		\left| \int_{\mathbb{T}^3} a(x) e^{i\lambda (\xi \cdot \Phi_{k,j}+ \xi' \cdot \Phi_{k,j'})} \dif x\right| \lesssim \frac{\|a\|_{C^0_tC^n_x}+\|a\|_{C^0_{t,x}} (\|\nabla \Phi_{k,j}\|_{C^0_tC_x^n}+  \|\nabla\Phi_{k,j'}\|_{C^0_tC_x^n})}{\lambda^n},
	\end{equation}
	and
	\begin{equation}\label{R2a}
		\aligned
		\|\mathcal{R}(a(W_{(\xi)}\circ\Phi_{k,j}\otimes &W_{(\xi')}\circ\Phi_{k,j'}))\|_{C^0_tC_x^\alpha} 
		\\ &\lesssim\frac{\|a\|_{C^0_{t,x}}}{\lambda^{1-\alpha}}+\frac{\|a\|_{C^0_tC_x^{n,\alpha}}+\|a\|_{C^0_{t,x}}(\|\nabla \Phi_{k,j}\|_{C^0_tC_x^{n,\alpha}}+  \|\nabla\Phi_{k,j'}\|_{C^0_tC_x^{n,\alpha}})}{{\lambda^{n-\alpha}}}.
		\endaligned
	\end{equation}
	The implicit constant depends only on $\alpha$ and $n$.
	
	In the following subsections, we aim to use the above discussion to control the $\| \cdot \|_{C^0}$ and $\$ \cdot \$_{C^0,r}$ norms, respectively, of the transport error, oscillation error, Nash error and corrector error. We first fix an integer $b\geq7$, then choose $\beta$, $\alpha$ sufficiently small such that $4b^3\gamma+4b^3\beta +2b\alpha<1$  and choose $m\in \mathbb{N}$ such that $m>\frac{2b+5}{b-5}$, at last we choose $a$ sufficiently large to absorb the constant.
	
	\subsubsection{Estimate on the transport  error.}\label{462}Inspecting the definition of $w_{q+1}^{(p)}$ as  \eqref{principal}, we note that the material derivative $\partial_t+(v_\ell+z_\ell)\cdot \nabla$ cannot lead to higher frequency term, since the term $W_{(\xi)} \circ\Phi_{k,j}$ is perfectly transported by $v_\ell+z_\ell$. Therefore, for $t\in [k,k+1]$, $k\in\mathbb{Z}$ we have
	\begin{equation*}
		(\partial_t+(v_\ell+z_\ell)\cdot \nabla)w_{q+1}^{(p)}=\sum_{j,\xi}(\partial_t+(v_\ell+z_\ell)\cdot \nabla)a_{(\xi)}W_{(\xi)} \circ \Phi_{k,j}.
	\end{equation*}
	Recalling the $C^N_{t,x}$-norm estimate of $a_{(\xi)}$ in \eqref{axiN} and \eqref{axi012} and the definition of $z_q$ in \eqref{def z}, we use chain rule and \eqref{vqaa} to obtain that for any $n\in  \mathbb{N}_0$
	\begin{equation}\label{tranerror}
		\aligned
		\|(\partial_t+(v_{\ell}+z_{\ell})\cdot \nabla)a_{(\xi)}\|_{C^n_{t,x}} 
		&\lesssim \|a_{(\xi)}\|_{C^{n+1}_{t,x}} +\sum_{k=0}^{n} \|v_\ell +z_\ell\|_{C^k_{t,x}} \cdot\| \nabla a_{(\xi)}\|_{C^{n-k}_{t,x}}
		\\ &\lesssim \ell^{-2n-3}\lambda_{q+3}^{(n+2)\gamma} + \sum_{k=0}^{n} \ell^{-k}\lambda_{q+3}^{\gamma} \ell^{-2(n-k+1)-1}\lambda_{q+3}^{(n-k+2)\gamma}
		\\ &\lesssim \ell^{-2n-3}\lambda_{q+3}^{(n+3)\gamma}.
		\endaligned
	\end{equation}
	By the $C^n$-norm estimate for $\Phi_{k,j}$ in \eqref{eq:Phin}, namely $\|\nabla \Phi_{k,j}(t)\|_{C^n_x}\leq \ell^{-n}$ for $n\geq1$, applying $\eqref{InR}$ to $R_{\textrm{transport}}$ with  $n=m$ and $a$ replaced by $(\partial_t+(v_{\ell}+z_{\ell})\cdot \nabla)a_{(\xi)}$, we have
	\begin{equation*}\label{Rtran}
		\aligned
		\|R_{\textrm{transport}}\|_{C^0_{t,x}}&\lesssim
		\frac{\ell^{-3}\lambda_{q+3}^{3\gamma}}{\lambda_{q+1}^{1-\alpha}} +\frac{\ell^{-2m-5}\lambda_{q+3}^{(m+4)\gamma}+\ell^{-4-m}\lambda_{q+3}^{3\gamma}}{\lambda_{q+1}^{m-\alpha}}
		\\&\lesssim \frac{\ell^{-3}\lambda_{q+3}^{3\gamma}}{\lambda_{q+1}^{1-\alpha}}\left( 1+\frac{\ell^{-2m-2}\lambda_{q+3}^{(m+1)\gamma}}{\lambda_{q+1}^{m-1}} \right)
		\\& \leq \frac{1}{6\cdot48}c_R\delta_{q+3} ,
		\endaligned
	\end{equation*}
	where we used $b\geq7$ and $3b^3\gamma+2b^3\beta+b\alpha<1
	$ to have $\lambda_{q}^{6+3b^3\gamma}<\lambda_{q}^{b-b\alpha-2b^3\beta}$. We also used $5m+b+5<bm$ to have $\lambda_{q}^{4m+4+(m+1)b^3\gamma}<\lambda_{q}^{(m-1)b}$, and $a$ was chosen sufficiently large to absorb the constant.
	Then taking expectation we obtain
	\begin{align*}
		\$R_{\textrm{transport}}\$_{C^0,r}\leq  \frac16  \delta_{q+2}, \qquad
		\$R_{\textrm{transport}}\$_{L^1,1}\leq  \frac{1}{6\cdot48}  c_R\delta_{q+3}\underline{e}.
	\end{align*}
	\subsubsection{Estimate on the oscillation error.}\label{463}
	Based on the previous computations in \eqref{Ros}, we can rewrite oscillation error as follows, for $t\in [k,k+1]$, $k\in\mathbb{Z}$
	\begin{equation*}
		\aligned
		R_{\textrm{oscillation}}=\sum_{j,j^{'} ,\xi+\xi^{'}\not =0} \mathcal{R} &\bigg[\left(W_{(\xi)}\circ\Phi_{k,j}\otimes W_{(\xi^{'})}\circ\Phi_{k,j^{'}}-\frac{1}{2}(W_{(\xi)}\circ\Phi_{k,j}\cdot W_{(\xi^{'})}\circ\Phi_{k,j^{'}})\Id\right)
		\\& \times \left(\nabla(a_{(\xi)}a_{(\xi^{'})})+i\lambda_{q+1}a_{(\xi)}a_{(\xi^{'})}((\nabla \Phi_{k,j}-\Id)\cdot \xi +(\nabla \Phi_{k,j^{'}}-\Id)\cdot \xi^{'})\right)\bigg].
		\endaligned
	\end{equation*}
	Then using \eqref{axiN}, \eqref{axi012} and chain rule, we have for any $n\in \mathbb{N}_0$
	\begin{equation}\label{aan}
		\aligned
		\|\nabla(a_{(\xi)}a_{(\xi^{'})})\|_{C^n_{t,x}}& \lesssim \sum_{k=0}^{n+1} \|a_{(\xi)}\|_{C^k_{t,x}} \|a_{(\xi)}\|_{C^{n-k+1}_{t,x}} 
		\\ &\lesssim \sum_{k=0}^{n+1} \ell^{-2k-\frac{1}{2}}\lambda_{q+3}^{(k+1)\gamma}\ell^{-2(n-k+1)-\frac{1}{2}}\lambda_{q+3}^{(n-k+2)\gamma}
		\\ &\lesssim \ell^{-2n-3}\lambda_{q+3}^{(n+3)\gamma}.
		\endaligned
	\end{equation}
	Using \eqref{Phija} and \eqref{axi012}, we find
	\begin{equation}\label{Ros1}
		\aligned
		\lambda_{q+1}\|a_{(\xi)}a_{(\xi^{'})}((\nabla \Phi_{k,j}-\Id)\cdot \xi +(\nabla \Phi_{k,j^{'}}-\Id)\cdot \xi^{'})\|_{C^0_{t,x}}\lesssim \lambda_{q+1}\lambda_q^{-\frac{1}{2}}\lambda_{q+3}^{2\gamma}.
		\endaligned
	\end{equation}
	Using \eqref{Phija}, \eqref{axiN}, \eqref{axi012}, \eqref{aan}, \eqref{eq:Phin} and chain rule, we obtain for $n\geq1$
	\begin{equation}\label{Ros2}
		\aligned
		&\lambda_{q+1}\|a_{(\xi)}a_{(\xi^{'})}((\nabla \Phi_{k,j}-\Id)\cdot \xi +(\nabla \Phi_{k,j^{'}}-\Id)\cdot \xi^{'})\|_{C^0_tC^n_x}
		\\ \lesssim& \lambda_{q+1} \sum_{l=0}^{n} \|a_{(\xi)}a_{(\xi')}\|_{C^0_tC^l_x}\|(\nabla \Phi_{k,j}-\Id)\cdot \xi +(\nabla \Phi_{k,j^{'}}-\Id)\cdot \xi^{'}\|_{C^0_tC^{n-l}_x}
		\\ \lesssim &\lambda_{q+1} \sum_{l=0}^{n} \ell^{-2l-1}\lambda_{q+3}^{(l+2)\gamma}\ell^{-(n-l)}
		\lesssim \lambda_{q+1} \ell^{-2n-1}\lambda_{q+3}^{(n+2)\gamma},
		\endaligned
	\end{equation}
	where we used $C^n$-norm estimate of $\nabla \Phi_{k,j}$ \eqref{eq:Phin} in the second inequality, namely, $\|\nabla \Phi_{k,j}(t)\|_{C^n_x}\leq \ell^{-n}$ for $n\geq1$.
	Combining \eqref{aan}, \eqref{Ros1}, \eqref{Ros2} with \eqref{eq:Phin} again, applying \eqref{R2a} with $n=m$ and $a$ replaced by $\nabla(a_{(\xi)}a_{(\xi^{'})})+i\lambda_{q+1}a_{(\xi)}a_{(\xi^{'})}((\nabla \Phi_{k,j}-\Id)\cdot \xi +(\nabla \Phi_{k,j^{'}}-\Id)\cdot \xi^{'})$, we obtain
	\begin{equation*}\label{Rosc}
		\aligned
		\|R_{\textrm{oscillation}}\|_{C^0_{t,x}} \lesssim & \frac{\ell^{-3}\lambda_{q+3}^{3\gamma}}{\lambda_{q+1}^{1-\alpha}}
		+\frac{\ell^{-2m-5}\lambda_{q+3}^{(m+4)\gamma}+\ell^{-m-4}\lambda_{q+3}^{3\gamma}}{\lambda_{q+1}^{m-\alpha}}
		\\&\qquad+\frac{\lambda_{q+1}\lambda_q^{-\frac{1}{2}}\lambda_{q+3}^{2\gamma}}{\lambda_{q+1}^{1-\alpha}} +\frac{ \ell^{-2m-3}\lambda_{q+3}^{(m+3)\gamma}+\ell^{-m-2}\lambda_{q+3}^{2\gamma} }{\lambda_{q+1}^{m-1-\alpha}}
		\\ \lesssim& \frac{\ell^{-3}\lambda_{q+3}^{3\gamma}}{\lambda_{q+1}^{1-\alpha}}\left( 1+ \frac{\ell^{-2m-2}\lambda_{q+3}^{(m+1)\gamma}}{\lambda_{q+1}^{m-2}}\right) +\frac{\lambda_{q+3}^{2\gamma}}{\lambda_q^{\frac{1}{2}}\lambda_{q+1}^{-\alpha}}
		\\\leq & \frac{1}{6\cdot48}  c_R\delta_{q+3},
		\endaligned
	\end{equation*}
	where we used $b\geq7$ and $4b^3\gamma+4b^3\beta+2b\alpha<1$ to have $\lambda_{q}^{6+3b^3\gamma}<\lambda_{q}^{b-b\alpha-2b^3\beta}$ and $\lambda_{q}^{2b^3\gamma}<\lambda_{q}^{\frac12-b\alpha-2b^3\beta}$. We also used $5m+2b+5<bm$ to have $\lambda_{q}^{4m+4+(m+1)b^3\gamma}<\lambda_{q}^{(m-2)b}$ and we chose $a$ sufficiently large to absorb the constant. Then taking expectation we obtain
	\begin{align*}
		\$R_{\textrm{oscillation}}\$_{C^0,r}\leq  \frac16  \delta_{q+2},\qquad
		\$R_{\textrm{oscillation}}\$_{L^1,1}\leq  \frac{1}{6\cdot48}  c_R\delta_{q+3}\underline{e}.
	\end{align*}
	
	\subsubsection{Estimate on the Nash error.}\label{464} Similar to transport error, we write
	\begin{align}\label{Rnash}
		w_{q+1}\cdot \nabla(v_\ell +z_\ell)=w_{q+1}^{(p)}\cdot \nabla(v_\ell +z_\ell)+w_{q+1}^{(c)}\cdot \nabla(v_\ell +z_\ell).
	\end{align}
	For the first term on the right hand side, we have for $t\in [k,k+1]$, $k\in \mathbb{Z}$
	\begin{equation*}
		w_{q+1}^{(p)}\cdot \nabla(v_\ell +z_\ell)=\sum_{j,\xi}a_{(\xi)}W_{(\xi)}\circ \Phi_{k,j}\cdot \nabla(v_{\ell}+z_{\ell}).
	\end{equation*}
	Using \eqref{vqaa}, \eqref{axiN}, \eqref{axi012}, \eqref{zq0} and chain rule, we have for $n\in \mathbb{N}_0$
	\begin{equation}\label{nashp}
		\|a_{(\xi)} \nabla(v_{\ell}+z_{\ell})\|_{C^n_{t,x}}\lesssim \ell^{-2n-2}\lambda_{q+3}^{(n+2)\gamma}.
	\end{equation}
	Combining \eqref{nashp} with \eqref{eq:Phin}, applying $\eqref{InR}$ with $n=m$ and $a$ replaced by $a_{(\xi)} \nabla(v_{\ell}+z_{\ell})$ implies 
	\begin{equation}\label{Nash1}
		\aligned
		\|\mathcal{R}(w_{q+1}^{(p)}\cdot \nabla(v_\ell +z_\ell))\|_{C^0_{t,x}}\lesssim \frac{\ell^{-2}\lambda_{q+3}^{2\gamma}}{\lambda_{q+1}^{1-\alpha}}+\frac{\ell^{-2m-4}\lambda_{q+3}^{(m+3)\gamma}+\ell^{-m-3}\lambda_{q+3}^{2\gamma}}{\lambda_{q+1}^{m-\alpha}}.
		\endaligned
	\end{equation}
	
	The second term on the right hand side of \eqref{Rnash} is
	\begin{equation*}
		w_{q+1}^{(c)}\cdot \nabla(v_\ell +z_\ell)=\sum_{j,\xi}\left( \left( \frac{\nabla a_{(\xi)}}{\lambda_{q+1}}+ i a_{(\xi)} (\nabla \Phi_{k,j}-\Id)\xi \right) \times W_{(\xi)}\circ \Phi_{k,j} \right)\cdot \nabla(v_\ell +z_\ell).
	\end{equation*}
	Similar as before, we use \eqref{vqaa}, \eqref{axiN}, \eqref{axi012} and \eqref{zq0} to have
	\begin{align*}
		\left\| \frac{\nabla a_{(\xi)}}{\lambda_{q+1}} \cdot\nabla(v_\ell +z_\ell)\right\|_{C^n_{t,x}}
		&\lesssim \frac{1}{\lambda_{q+1}} \sum_{k=0}^{n} \|\nabla a_{(\xi)}\|_{C^k_{t,x}} \|\nabla(v_\ell+z_\ell)\|_{C^{n-k}_{t,x}} 
		\\&\lesssim \frac{1}{\lambda_{q+1}}\sum_{k=0}^{n}\ell^{-2k-3} \lambda_{q+3}^{(k+2)\gamma} \ell^{-(n-k+1)}\lambda_{q+3}^{\gamma}
		\\&\lesssim \frac{\ell^{-2n-4}}{\lambda_{q+1}}\lambda_{q+3}^{(n+3)\gamma},
	\end{align*}
	and we use \eqref{nashp} and \eqref{eq:Phin} to have
	\begin{align*}
		\|a_{(\xi)} (\nabla \Phi_{k,j}-\Id)\cdot \nabla(v_\ell +z_\ell)\|_{C^0_tC^n_x} &\lesssim \sum_{l=0}^{n} \|a_{(\xi)} \nabla(v_{\ell}+z_{\ell})\|_{C^0_tC^l_x} \| \nabla\Phi_{k,j}\|_{C^0_tC_x^{n-l}}
		\\ &\lesssim \sum_{l=0}^{n}  \ell^{-2l-2}\lambda_{q+3}^{(l+2)\gamma}\ell^{-(n-l)}
		\\ &\lesssim \ell^{-2n-2}\lambda_{q+3}^{(n+2)\gamma}.
	\end{align*}
	Then combining the above two estimates with \eqref{eq:Phin} and applying $\eqref{InR}$ with $n=m$ implies 
	\begin{equation}\label{Nash2}
		\aligned
		\|\mathcal{R}(w_{q+1}^{(c)}\cdot \nabla(v_\ell +z_\ell))\|_{C^0_{t,x}} &\lesssim \frac{\ell^{-4}\lambda_{q+3}^{3\gamma}}{\lambda_{q+1}^{2-\alpha}}+\frac{\ell^{-2m-6}\lambda_{q+3}^{(m+4)\gamma}+\ell^{-m-5}\lambda_{q+3}^{3\gamma}
		}{\lambda_{q+1}^{m+1-\alpha}}
		\\&\qquad+\frac{\ell^{-2}\lambda_{q+3}^{2\gamma}}{\lambda_{q+1}^{1-\alpha}}+\frac{\ell^{-2m-4}\lambda_{q+3}^{(m+3)\gamma}+\ell^{-m-3}\lambda_{q+3}^{2\gamma}}{\lambda_{q+1}^{m-\alpha}}.
		\endaligned
	\end{equation}
	Combining \eqref{Nash1} with \eqref{Nash2} we obatin
	\begin{align*}\label{Rn} 
		\|R_{\textrm{Nash}}\|_{C^0_{t,x}} &
		\lesssim 	\|\mathcal{R}(w_{q+1}^{(p)}\cdot \nabla(v_\ell +z_\ell))\|_{C^0_{t,x}}+	\|\mathcal{R}(w_{q+1}^{(c)}\cdot \nabla(v_\ell +z_\ell))\|_{C^0_{t,x}}
		\\ &\lesssim \left( \frac{\ell^{-4}\lambda_{q+3}^{3\gamma}}{\lambda_{q+1}^{2-\alpha}} +\frac{\ell^{-2}\lambda_{q+3}^{2\gamma}}{\lambda_{q+1}^{1-\alpha}} \right) \left( 1+ \frac{\ell^{-2m-2}\lambda_{q+3}^{(m+1)\gamma}
		}{\lambda_{q+1}^{m-1}} \right)  \leq \frac{1}{6\cdot48}  c_R\delta_{q+3},
	\end{align*}
	where we used $b\geq7$ and $3b^3\gamma+3b^3\beta+b\alpha<1$ to have $\lambda_{q}^{4+2b^3\gamma+b\alpha}< \lambda_{q}^{b-2b^3\beta}$ and $\lambda_{q}^{8+3b^3\gamma+b\alpha}< \lambda_{q}^{2b-2b^3\beta}$. We also used $5m+b+5<bm$ to have $\lambda_{q}^{4m+4+(m+1)b^3\gamma
	}<\lambda_{q}^{(m-1)b}$ and chose $a$ sufficiently large to absorb the constant. Then taking expectation, we obtain
	\begin{align*}
		\$R_{\textrm{Nash}}\$_{C^0,r}\leq  \frac16  \delta_{q+2},
    \qquad
		\$R_{\textrm{Nash}}\$_{L^1,1}\leq  \frac{1}{6\cdot48}  c_R\delta_{q+3}\underline{e}.
	\end{align*}
	
	\subsubsection{Estimate on the corrector error.}\label{465}The corrector error has two pieces, one is the transport derivative of $w^{(c)}_{q+1}$ by the flow of $v_{\ell}+z_\ell$. The orther is a residual contribution from the nonlinear terms, which is easier to estimate due to \eqref{wp00} and \eqref{wc0'}. Therefore, we have
	\begin{equation*}
		\|w_{q+1}^{(c)}\mathring \otimes w_{q+1}^{(c)}\|_{C^0_{t,x}}\leq \|w_{q+1}^{(c)}\|_{C^0_{t,x}}^2\leq  \frac{\lambda_q^8\lambda_{q+3}^{4\gamma}}{\lambda_{q+1}^2}+\lambda_{q+3}^{2\gamma}\lambda_q^{-1} \leq \frac{1}{3{\cdot}6{\cdot}96}  c_R\delta_{q+3},
	\end{equation*}
	and
	\begin{equation*}
		\|w_{q+1}^{(p)}\mathring \otimes w_{q+1}^{(c)}\|_{C^0_{t,x}}\lesssim  \lambda_{q+3}^{\gamma}\left( \frac{\lambda_q^4\lambda_{q+3}^{2\gamma}}{\lambda_{q+1}}+\lambda_{q+3}^{\gamma}\lambda_q^{-\frac{1}{2}}\right) \leq \frac{1}{3{\cdot}6{\cdot}96}  c_R\delta_{q+3},
	\end{equation*}
	where we used $b\geq7$ and $4b^3\gamma+4b^3\beta<1$ to have $\lambda_{q}^{2b^3\gamma}<\lambda_{q}^{\frac12-2b^3\beta}$, $\lambda_{q}^{4+2b^3\gamma}<\lambda_{q}^{b-2b^3\beta}$ and $\lambda_{q}^{8+4b^3\gamma}<\lambda_{q}^{2b-2b^3\beta}$. Also, $a$ was chosen sufficiently large to absorb the constant. Combining the above estimates and taking expectations we obtain
	\begin{equation}\label{Cor1}
		\aligned
		\$w_{q+1}^{(c)}\mathring \otimes w_{q+1}^{(c)}+w_{q+1}^{(p)}\mathring \otimes w_{q+1}^{(c)}+w_{q+1}^{(c)}\mathring \otimes w_{q+1}^{(p)}\$_{C^0,r} &\leq  \frac{1}{12} \delta_{q+2},
		\\ \$w_{q+1}^{(c)}\mathring \otimes w_{q+1}^{(c)}+w_{q+1}^{(p)}\mathring \otimes w_{q+1}^{(c)}+w_{q+1}^{(c)}\mathring \otimes w_{q+1}^{(p)}\$_{L^1,1} &\leq  \frac{1}{6{\cdot}96}  c_R\delta_{q+3}\underline{e}.
		\endaligned
	\end{equation}

	The remaining term in the corrector error is
	\begin{align*}
		\mathcal{R} \left((\partial_t+(v_\ell+z_\ell)\cdot \nabla ) w_{q+1}^{(c)}\right).
	\end{align*}
	We use the definition of $w_{q+1}^{(c)}$ to have
	\begin{equation*}
		\aligned
		(\partial_t+(v_\ell+z_\ell)\cdot \nabla)w_{q+1}^{(c)}
		= \sum_{j,\xi}\left((\partial_t+(v_\ell+z_\ell)\cdot \nabla)\left(\frac{\nabla a_{(\xi)}}{\lambda_{q+1}}+i a_{(\xi)}(\nabla \Phi_{k,j}-\Id)\xi\right)\right)\times W_{(\xi)}\circ \Phi_{k,j}.
		\endaligned
	\end{equation*}
	Using \eqref{vqaa}, \eqref{axiN} and \eqref{axi012}, we obtain
	\begin{equation*}
		\aligned
		\left\|(\partial_t+(v_\ell+z_\ell)\cdot \nabla)\frac{\nabla a_{(\xi)}}{\lambda_{q+1}}\right\|_{C^n_{t,x}}
		& \lesssim \frac{1}{\lambda_{q+1}}\|a_{(\xi)}\|_{C^{n+2}_{t,x}} +\frac{1}{\lambda_{q+1}}\sum_{k=0}^{n}\|a_{(\xi)}\|_{C^{k+2}_{t,x}} \|v_\ell+z_\ell\|_{C^{n-k}_{t,x}}
		\\ &\lesssim \frac{1}{\lambda_{q+1}} \ell^{-2n-5}\lambda_{q+3}^{(n+3)\gamma}+ \frac{1}{\lambda_{q+1}} \sum_{k=0}^{n} \ell^{-2k-5}\lambda_{q+3}^{(k+3)\gamma}\ell^{-(n-k)}\lambda_{q+3}^\gamma
		\\ &\lesssim \frac{\ell^{-2n-5}}{\lambda_{q+1}}\lambda_{q+3}^{(n+4)\gamma}.
		\endaligned
	\end{equation*}
	Using \eqref{tranerror} and \eqref{eq:Phin}, we obtain 
	\begin{equation*}
		\aligned
		\left\|(\partial_t+(v_\ell+z_\ell)\cdot \nabla) (a_{(\xi)}(\nabla \Phi_{k,j}- \Id)) \right\|_{C^0_tC^n_x} 
		&\lesssim \sum_{l=0}^{n}\|(\partial_t+(v_\ell+z_\ell)\cdot \nabla)a_{(\xi)} \|_{C^0_tC^l_x} \|\nabla \Phi_{k,j}\|_{C^0_tC^{n-l}_x} 
		\\ &\lesssim \sum_{l=0}^{n}\ell^{-2l-3}\lambda_{q+3}^{(l+3)\gamma}\ell^{-(n-l)}
		\\ &\lesssim \ell^{-2n-3}\lambda_{q+3}^{(n+3)\gamma}.
		\endaligned
	\end{equation*}
	Combining the above two estimtaes with \eqref{eq:Phin}, applying $\eqref{InR}$ with $n=m$ implies
	\begin{equation*}\label{Rc}
		\aligned		
		\left\|\mathcal{R}\left((\partial_t+(v_\ell+z_\ell)\cdot \nabla)w_{q+1}^{(c)}\right)\right\|_{C^0_{t,x}} \lesssim& \frac{\ell^{-5}\lambda_{q+3}^{4\gamma}}{\lambda_{q+1}^{2-\alpha}}+\frac{\ell^{-2m-7}\lambda_{q+3}^{(m+5)\gamma}+\ell^{-m-6}\lambda_{q+3}^{4\gamma}}
		{\lambda_{q+1}^{m+1-\alpha}}
		\\&+\frac{\ell^{-3}\lambda_{q+3}^{3\gamma}}{\lambda_{q+1}^{1-\alpha}}+\frac{\ell^{-2m-5}\lambda_{q+3}^{(m+4)\gamma}+\ell^{-m-4}\lambda_{q+3}^{3\gamma}} {\lambda_{q+1}^{m-\alpha}} 
		\\ \lesssim& \left( \frac{\ell^{-5}\lambda_{q+3}^{4\gamma}}{\lambda_{q+1}^{2-\alpha}} +\frac{\ell^{-3}\lambda_{q+3}^{3\gamma}}{\lambda_{q+1}^{1-\alpha}} \right) \left( 1+\frac{\ell^{-2m-2}\lambda_{q+3}^{(m+1)\gamma}}
		{\lambda_{q+1}^{m-1}} \right) 
		\\ \leq&  \frac{1}{6{\cdot}96}  c_R\delta_{q+3},
		\endaligned
	\end{equation*}
	where we used $b\geq7$ and $4b^3\gamma+2b^3\beta+b\alpha<1$ to have $\lambda_{q}^{10+4b^3\gamma}<\lambda_{q}^{2b-\alpha b-2b^3\beta}$ and $\lambda_{q}^{6+3b^3\gamma}<\lambda_{q}^{b-\alpha b-2b^3\beta}$. We also used  $5m+5+b<bm$ to have  $\lambda_{q}^{4m+4+(m+1)b^3\gamma}<\lambda_{q}^{(m-1)b}$ and chose $a$ sufficiently large to absorb the constant. Taking expetation and combining \eqref{Cor1}, we obatin
	\begin{align*}
		\$R_{\textrm{corrector}}\$_{C^0,r}\leq \frac16  \delta_{q+2},
	\qquad
		\$R_{\textrm{corrector}}\$_{L^1,1}\leq \frac{1}{6{\cdot}48}  c_R\delta_{q+3}\underline{e}.
	\end{align*}
	
	\subsubsection{Estimate on the commutator error}\label{527}
	Using the definition of $z_q$ in \eqref{def z} and the pathwise inductive assumption \eqref{vqaa}, we obtain
	\begin{align}\label{Rcommutator1}
		\|R_\textrm{commutator}\|_{C^0_{t,x}}\leq 2 \|v_q+z_q\|^2_{C^0_{[t-1,t+1],x}} \leq 4(\lambda_{q+2}^{2\gamma}+\lambda_{q+3}^{\frac{\gamma}{2}}) \leq \frac{1}{10}\lambda_{q+4}^\gamma,
	\end{align}
	where we chose $a$ sufficiently large to absorb the constant in the last inequality. Using a mollification estimate and inductive assumption \eqref{vqaa} and \eqref{vqcc},  we have for any $t\in \R$ and $\delta \in (0,\frac{1}{12})$
	\begin{equation*} 
		\aligned
		&\|R_\textrm{commutator}(t)\|_{L^\infty}
		\\ &\lesssim   
		\left(\ell \|v_q\|_{C_{[t-1,t],x}^1}+\ell\|z_q\|_{C^0_{[t-1,t]}C_x^1}
		+\ell^{\frac{1}{2}-\delta}\|z_q\|_{C^{\frac{1}{2}-\delta}_{[t-1,t]} C_x^0}\right)\left(\|z_q\|_{C^0_{[t-1,t],x}}+\|v_q\|_{C^0_{[t-1,t],x}}\right).
		\endaligned
	\end{equation*}
	Taking expectation and using \eqref{estimate-zq}, \eqref{vqaa} and \eqref{vqbb}, we obtain
	\begin{equation}\label{Rcommutator}
		\aligned
		\$ R_\textrm{commutator}\$_{C^0,r} &\lesssim  \left( \ell\lambda_{q}^{\frac32}+\ell \$z_q\$_{C^1,2r}+\ell^{\frac12-\delta} \$z_q\$_{C^{\frac12-\delta}_tC^0,2r} \right)\left(  \$z_q\$_{C^0,2r}+ \$v_q\$_{C^0,2r}\right)  
		\\&\lesssim \left( \ell\lambda_{q}^{\frac32}+\ell \lambda_{q+1}^{\frac{\gamma}{8}}2rL+\ell^{\frac12-\delta}\lambda_{q+1}^{\frac{\gamma}{8}}2pL^2\right) \left( \lambda_{q}^{\beta}+ 2rL \right) 
		\\&\lesssim \lambda_{q}^{-\frac12+\beta}+\lambda_{q}^{-\frac34+\beta}\lambda_{q+1}^{\frac{\gamma}{8}}
		\leq \frac{1}{6\cdot48}c_R\delta_{q+3} \leq \frac{1}{6}\delta_{q+2},
		\endaligned
	\end{equation}
	where we used $b\gamma+4b^3\beta<1$ to have $\lambda_q^{-\frac12+\beta}<\lambda_{q}^{-2b^3\beta}$ and $\lambda_q^{-\frac34+\beta+\frac{1}{8}b\gamma}<\lambda_{q}^{-2b^3\beta}$, and $a$ was chosen sufficiently large to absorb the constant. By \eqref{Rcommutator}, we also obtain
	\begin{align}\label{com''}
		\qquad  \$ R_\textrm{commutator}\$_{L^1,1}\leq \frac{1}{6\cdot48}c_R\delta_{q+3}\underline{e}.
	\end{align}

	Summarizing over  all the estimates we have established above and by \eqref{Rcom1tx0} and \eqref{Rcommutator1}, we obtain
	\begin{equation}
		\|\mathring{R}_{q+1}\|_{C^0_{t,x}}\leq \lambda_{q+4}^\gamma, \quad 	\$\mathring{R}_{q+1}\$_{C^0,r} \leq \delta_{q+2},\quad \$\mathring{R}_{q+1}\$_{L^1,1} \leq \frac{1}{48}c_R\delta_{q+3}\underline{e}.
	\end{equation}
	Hence, \eqref{vqdd},  \eqref{vqee} and  \eqref{vqf} hold at the level $q+1$.
	\subsection{Inductive estimates for the energy}\label{s:en'}
	To conclude the proof of Proposition~\ref{p:iteration1}, we shall verify \eqref{vqg} holds at the level $q+1$. 

	\bp\label{proof:en} 
	It holds for $t\in\mR$
	\begin{align}\label{1/4'}
	\Big| e(t)(1-\delta_{q+2})-\E\|v_{q+1}(t)+{z_{q+1}}(t)\|_{L^2}^2\Big|\leq& \frac14\delta_{q+2}e(t).
	\end{align}
	
	\ep
	\begin{proof}
		By the definition of $\zeta_q$ in \eqref{zetaq}, we have
		\begin{align}\label{Eq+1'}
			\begin{aligned}
				&\left| e(t)(1-\delta_{q+2})- \E\|v_{q+1}(t)+{z_{q+1}}(t)\|_{L^2}^2 \right| \\&\leq \E \left| \int_{\mathbb{T}^3} |w_{q+1}^{(p)}|^2-3\zeta_q\dif x \right| +  \E \left|  \int_{\mathbb{T}^3}|w_{q+1}^{(c)}|^2+2w_{q+1}^{(p)}\cdot w_{q+1}^{(c)} \dif x \right| 
				\\&\qquad + \E \left| \int_{\mathbb{T}^3}|v_\ell-v_q+z_{q+1}-z_q|^2 \dif x  \right| + 2  \E \left| \int_{\mathbb{T}^3}(v_\ell-v_q+z_{q+1}-z_q)(v_q+z_q)\dif x  \right| 
				\\&\qquad +2  \E \left| \int_{\mathbb{T}^3}(v_\ell+z_{q+1})\cdot w_{q+1} \dif x \right|.
			\end{aligned}
		\end{align}
		Let us begin with the estimates for the first term  on the right hand side of \eqref{Eq+1'}. Using \eqref{-osc} and the same computation of oscillation error in \eqref{Ros}, we have for any $t\in [k,k+1]$, $k\in \mathbb{Z}$
       
		\begin{equation}\label{wp^2}
			\aligned
			&w_{q+1}^{(p)}\otimes w_{q+1}^{(p)} \\&= \sum_{j,\xi}c^{-1}_*\rho \eta_{k,j}^2\gamma_{\xi}^{(j)}\left(\Id
			- \frac{c_*\mathring{R}_\ell}{\rho} \right)^2 B_{\xi}\otimes B_{-\xi} +\sum_{j,j',\xi+\xi'\neq0} a_{(\xi)}a_{(\xi')}W_{\xi}\circ\Phi_{k,j}\otimes W_{\xi'}\circ\Phi_{k,j'}
			\\ &= \sum_{j}c^{-1}_*\rho \eta_{k,j}^2\left(\Id
			- \frac{c_*\mathring{R}_\ell}{\rho} \right)+ \sum_{j,j',\xi+\xi'\neq0} a_{(\xi)}a_{(\xi')}W_{\xi}\circ\Phi_{k,j}\otimes W_{\xi'}\circ\Phi_{k,j'}
			\\&= c^{-1}_*\rho \Id-\mathring{R}_\ell+\sum_{j,j',\xi+\xi'\neq0} a_{(\xi)}a_{(\xi')}W_{\xi}\circ\Phi_{k,j}\otimes W_{\xi'}\circ\Phi_{k,j'}.
			\endaligned
		\end{equation}
		Taking the trace on both sides of \eqref{wp^2} and using the fact that $\mathring{R}_\ell$ is traceless, we deduce for $t\in[k,k+1]$, $k\in \mathbb{Z}$
		\begin{align*}
			|w_{q+1}^{(p)}|^2-3\zeta_q=3c_*^{-1}\sqrt{\ell^2+|\mathring{R}_\ell|^2}+3(\zeta_\ell-\zeta_q)+\sum_{j,j',\xi+\xi'\neq0}\tr(a_{(\xi)}a_{(\xi')}W_{\xi}\circ\Phi_{k,j}\otimes W_{\xi'}\circ\Phi_{k,j'}).
		\end{align*}
		As a result, we have
		\begin{equation}\label{Eq1'}
			\begin{aligned}
				\E \left| \int_{\mathbb{T}^3} |w_{q+1}^{(p)}|^2-3\zeta_q\dif x \right| 
				\leq &3 c_*^{-1}(2\pi)^3\ell+3c_*^{-1}\mathbf{E}\|\mathring{R}_\ell\|_{L^1}+3\cdot(2\pi)^3|\zeta_\ell-\zeta_q|
				\\+&\E \sum_{j,j',\xi+\xi'\neq0} \left|\int_{\mathbb{T}^3}\tr(a_{(\xi)}a_{(\xi')}W_{\xi}\circ\Phi_{k,j}\otimes W_{\xi'}\circ\Phi_{k,j'})\dif x\right|.
			\end{aligned}
		\end{equation}
		In the following, we estimate each term separately. By the choice of parameters, we find
		\begin{align*}
			3 c_*^{-1}(2\pi)^3\ell=3 c_*^{-1}(2\pi)^3\lambda_{q}^{-2}\leq\frac{1}{48}\delta_{q+2}e(t),
		\end{align*}
		which requires $b^2\beta<1$ and $a$ sufficiently large to absorb the constant.
		We use \eqref{vqf} on $\mathring{R}_q$, $\supp\varphi_\ell\subset [0,\ell]$ and $c_R<c_*$  to obtain for $t\in\mR$
		\begin{align*}
			3c_*^{-1}\mathbf{E}\|\mathring{R}_\ell(t)\|_{L^1}\leq  \frac{1}{16}\delta_{q+2}e(t).
		\end{align*}
		For the third term in \eqref{Eq1'}, using \eqref{zh5/2}, \eqref{estimate-zq}, \eqref{vqbb} and \eqref{vqcc} we obtain
		\begin{align*}
			3\cdot(2\pi)^3|\zeta_\ell-\zeta_q|&\lesssim \ell \|e'\|_{C^{0}_{[t-1,t]}}+\ell  \E\|v_q\|_{C^1_{[t-1,t],x}}(\|v_q\|_{C_{[t-1,t],x}^0}+{\|z_q\|_{C_{[t-1,t],x}^0}})
			\\&\qquad \qquad +\ell^{1/2-\delta}\E\|{z_q}\|_{C_{[t-1,t]}^{1/2-\delta}C_x^0}(\|v_q\|_{C_{[t-1,t],x}^0}+\|{z_q}\|_{C_{[t-1,t],x}^0})
			\\ &\lesssim  \ell \tilde e+\ell \lambda_q^{\frac{3}{2}}(\$v_q\$_{C^0,1} + \$z_q\$_{C^0,1})+
			\ell^{\frac12-\delta}\$z_q\$_{C^{1/2-\delta}_tC^0_x,2}(\$v_q\$_{C^0,2}+\$z_q\$_{C^0,2})
			\\&\lesssim \ell \tilde e+\ell \lambda_q^{\frac{3}{2}}\$v_q\$_{C^0,1}+\ell \lambda_q^{\frac{3}{2}}\$z\$_{H^{\frac32+\sigma},1}
			\\ &\qquad \qquad +\ell^{1/2-\delta}\$z\$_{C^{1/2-\delta}_tH^{\frac32+\sigma},2}\$v_q\$_{C^0,2}+\ell^{1/2-\delta}\$z\$^2_{C^{1/2-\delta}_tH^{\frac32+\sigma},2}
			\\ &\lesssim \ell \tilde e + \ell \lambda_q^{\frac32} \lambda_q^{\beta}+\ell^{1/2-\delta}\lambda_q^{\beta}
			\leq \frac{1}{ 48}\delta_{q+2}e(t),
		\end{align*}
		where we used $4b^2\beta+2\beta<1$ to have $\lambda_q^{-\frac12+\beta}<\lambda_{q}^{-2b^2\beta}$ and $\lambda_{q}^{-\frac56+\beta}<\lambda_q^{-2b^2\beta}$, and $a$ was chosen sufficiently large to absorb the constant.
		
		Recalling the definition of $\eta_{k,j}$ and $a_{(\xi)}$ in \eqref{def axi}, we can simplify the last term in \eqref{Eq1'}. 
		For any fixed $t$ within the interval $[k,k+1]$, the sum concerning $j$ and $j'$ is  the finite sum of at most two terms. Hence, applying \eqref{R1a} with $n=m$ and using the estimates \eqref{axiN}, \eqref{axi012} and \eqref{eq:Phin} we obtain 
		\begin{align*}
			\E& \sum_{j,j',\xi+\xi'\neq0}\left| \int_{\mathbb{T}^3}\tr(a_{(\xi)}a_{(\xi')}W_{\xi}\circ\Phi_j\otimes W_{\xi'}\circ\Phi_{j'})\dif x\right| 
			\\&  \lesssim \E \sup_{j.j'} \sum_{\xi+\xi'\neq0}  \left|\int_{\mathbb{T}^3} a_{(\xi)}a_{(\xi')}  B_{(\xi)}\otimes B_{(\xi')}e ^{i\lambda_{q+1}(\xi\cdot  \Phi_{k,j} +\xi' \cdot  \Phi_{k,j'} )} \dif x \right| 
			\\ &\lesssim  \sup_{j,j'}\sum_{\xi+\xi'\neq0} \frac{\|a_{(\xi)}a_{(\xi')} \|_{C^0_tC^m_x}+ \|a_{(\xi)}a_{(\xi')} \|_{C^0_{t,x}}  (\|\nabla\Phi_{k,j}\|_{C^0_tC^m_x}+\|\nabla\Phi_{k,j'}\|_{C^0_tC^m_x})}{\lambda_{q+1}^m}
			\\& \lesssim  \frac{\ell^{-2m-1}\lambda_{q+3}^{(m+2)\gamma}}{\lambda_{q+1}^{m}}+\frac{\ell^{-m}\lambda_{q+3}^{2\gamma}}{\lambda_{q+1}^{m}}
			\\&\leq\frac{1}{48}\delta_{q+2}e(t),
		\end{align*}
		where we used $b^3\gamma+2b^2\beta<1$ and $5m+5<bm$ to have $\lambda_{q}^{4m+2+(m+2)b^3\gamma}<\lambda_{q}^{bm-2b^2\beta}$, and we chose 
		$a$ sufficiently large to absorb the constant. This completes the bound for \eqref{Eq1'}.
		
		Going back to \eqref{Eq+1'}, we have to control 
		\begin{equation}\label{E}
			\aligned
			& \E \left|  \int_{\mathbb{T}^3}|w_{q+1}^{(c)}|^2+2w_{q+1}^{(p)}\cdot w_{q+1}^{(c)} \dif x \right| 
			+\E \left| \int_{\mathbb{T}^3}|v_\ell-v_q+z_{q+1}-z_q|^2 \dif x  \right| 
			\\&\qquad+ 2 \E \left|  \int_{\mathbb{T}^3}(v_\ell-v_q+z_{q+1}-z_q)(v_q+z_q)\dif x  \right| 
			+2 \E \left| \int_{\mathbb{T}^3}(v_\ell+z_{q+1})\cdot w_{q+1} \dif x \right| .
			\endaligned
		\end{equation}
	We use the estimates \eqref{wp00} and \eqref{wc0'} to obtain
		\begin{align*}
			& \E \left|  \int_{\mathbb{T}^3}|w_{q+1}^{(c)}|^2+2w_{q+1}^{(p)}\cdot w_{q+1}^{(c)} \dif x \right|  
			\lesssim \E\|w_{q+1}^{(c)}\|_{C^0_{t,x}}^2+2\E\|w_{q+1}^{(p)}\|_{C^0_{t,x}}\|w_{q+1}^{(c)}\|_{C^0_{t,x}}
			\\ & \qquad \qquad \lesssim  \frac{\lambda_{q}^8\lambda_{q+3}^{4\gamma}}{\lambda_{q+1}^{2}}+\lambda_{q+3}^{2\gamma}\lambda_{q}^{-1}+\frac{\lambda_{q}^4\lambda_{q+3}^{3\gamma}}{\lambda_{q+1}} + \lambda_{q+3}^{2\gamma}\lambda_{q}^{-\frac12}\leq \frac{1}{48}\delta_{q+2}e(t),
		\end{align*}
		where we used $b\geq7$, $4b^3\gamma+4b^2\beta<1$ to have $\lambda_{q}^{2b^3\gamma-\frac12}<\lambda_{q}^{-2b^2\beta}$,  $\lambda_q^{4+3b^3\gamma}<\lambda_{q}^{b-2b^2\beta}$ and $\lambda_{q}^{8+4b^3\gamma}<\lambda_{q}^{2b-2b^2\beta}$, and $a$ was chosen sufficiently large to absorb the constant. Using \eqref{zh5/2}, \eqref{estimate-zq}, \eqref{vqbb}, \eqref{vq-vl1} and \eqref{z-p} we obtain
		\begin{align*}
			\E &\int_{\mathbb{T}^3}|v_\ell-v_q+z_{q+1}-z_q|^2 \dif x  
			+ 2 \E\left| \int_{\mathbb{T}^3}(v_\ell-v_q+z_{q+1}-z_q)(v_q+z_q)\dif x  \right| 
			\\&\lesssim \E\left(\|v_\ell-v_q+z_{q+1}-z_q\|_{C_{t,x}^0}^2\right)+\E\left( \|v_\ell-v_q+z_{q+1}-z_q\|_{C_{t,x}^0}\|z_q+v_q\|_{C_{t,x}^0}\right) 
			\\&\lesssim \E\left( \|v_\ell-v_q\|^2_{C_{t,x}^0}\right) +\$z_{q+1}-z_q\$^2_{C^0,2}
			\\ & \qquad+\left( \$v_\ell-v_q\$_{C^0,2}+\$z_{q+1}-z_q\$_{C^0,2}\right) (\$z_q\$_{C^0,2}+\$v_q\$_{C^0,2})
			\\&\lesssim \ell^2\lambda_{q}^3+ (\lambda_{q+1}^{-\frac{\gamma\sigma}{4}}+\lambda_{q+2}^{-2\gamma}\lambda_{q+1}^{\gamma}) +(\ell\lambda_{q}^{\frac32}+\lambda_{q+1}^{-\frac{\gamma\sigma}{8}}+\lambda_{q+2}^{-\gamma}\lambda_{q+1}^{\frac{\gamma}{4}})(\$z\$_{H^{\frac32+\sigma},2}+\lambda_q^{\beta})
			\\& \leq \lambda_{q}^{-\frac12}\lambda_{q+1}^{\beta} +\lambda_{q+1}^{-\frac{\gamma\sigma}{8}+\beta}+ \lambda_{q+2}^{-\gamma}\lambda_{q+1}^{\beta+\gamma} \leq \frac{1}{48}\delta_{q+2}e(t),
		\end{align*}
		where we used $2b\beta+\beta<\frac{\gamma\sigma}{8}$ to have $\lambda_{q}^{b\beta+2b^2\beta}<\lambda_{q}^{\frac{\gamma\sigma}{8}b}$. We also used $4b^2\beta+2b\beta<1$, $3b\beta<\gamma$ to have  $\lambda_{q}^{b\beta-\frac12}<\lambda_{q}^{-2b^2\beta}$, $\lambda_{q}^{-b^2\gamma+b\beta+b\gamma}<\lambda_{q}^{-2b^2\beta}$ and chose $a$ large enough to absorb the constant. 
		
		For the last term in \eqref{E}, we recall (cf.\eqref{wq+1'}) that $w_{q+1}$ may be written as the curl of a vector field as
		\begin{align*}
			w_{q+1}=\frac{1}{\lambda_{q+1}}\sum_{j,\xi}\mathrm{curl}(a_{\xi}B_{\xi}e^{i\lambda_{q+1}\xi\cdot\Phi_{k,j}}).
		\end{align*}
		Integrating by parts and using the estimates \eqref{zh5/2}, \eqref{estimate-zq}, \eqref{vqbb} and \eqref{axi012}, we have
		\begin{align*}
			2\E \left| \int_{\mathbb{T}^3}(v_\ell+z_{q+1})\cdot w_{q+1} \dif x \right| &\lesssim \frac{1}{\lambda_{q+1}} \E\left(  \sup_{j}\sum_{\xi}\big\|a_{\xi}B_{\xi}e^{i\lambda_{q+1}\xi\cdot\Phi_{k,j}}\big\|_{C^0_{t,x}}\|v_\ell+z_{q+1}\|_{C_{t,x}^1}\right) 
			\\ &\lesssim \frac{1}{\lambda_{q+1}} \sum_{\xi} \E (\|a_{\xi}\|_{C^0_{t,x}}\cdot\|v_\ell+z_{q+1}\|_{C^0_tC_x^1} )
			\\&\lesssim \frac{\lambda_{q+3}^\gamma}{\lambda_{q+1}}(\|v_q\|_{C^1_{[t-1,t+1],x}}+\$z_{q+1}\$_{C^1,1})
			\\&\lesssim \frac{\lambda_{q+3}^\gamma}{\lambda_{q+1}}(\lambda_{q}^{\frac32}+\lambda_{q+1}^{\frac{\gamma}{8}}\$z\$_{H^{\frac32+\sigma},1}) \leq \frac{1}{48}\delta_{q+2}e(t),
		\end{align*}
		where we used $b\geq7$, $2b^2\beta+b^3\gamma<1$ to have $\lambda_{q}^{b^3\gamma
			+\frac32}<\lambda_{q}^{b-2b^2\beta}$ and $\lambda_{q}^{b^3\gamma+b\gamma
		}<\lambda_{q}^{b-2b^2\beta}$, and $a$ was chosen sufficiently large to absorb the constant.
		Summing over the above estimates, we
		obtain 
		\begin{align*}
			\Big| e(t)(1-\delta_{q+2})-\E\|v_{q+1}(t)+{z_{q+1}}(t)\|_{L^2}^2\Big|\leq\frac14 \delta_{q+2}e(t).
		\end{align*}
		Hence, the proof of Proposition~\ref{p:iteration1} is complete.
	\end{proof}

	\section{Stationary solutions to the stochastic Euler equations}
	\label{s:4}
	
	We consider that the trajectory space defined in Section~\ref{main} is $\mathcal{T}= C(\mR;C^{\kappa})\times C(\mR;C^{\kappa})$ and we take $\kappa=\frac{\vartheta}{2}$ in this section. The corresponding shifts $S_t$, $t\in\mR$,  on trajectories are given by
	$$
	S_t(u,B)(\cdot)=(u(\cdot+t),B(\cdot+t)-B(t)),\quad t\in\mR,\quad (u,B)\in\cT.
	$$
	Recall the notion of stationary solution introduced in Definition~\ref{d:1.1}. Our main result of this section is the existence of stationary solutions which are constructed as limits of ergodic averages of solutions from Theorem~\ref{v1}. This also implies their non-uniqueness.
	\bt\label{th:s1}
	Let $u$ be a solution obtained in Theorem~\ref{v1} with $e(t)=K$ for some constant $K\geq 6{\cdot48} {\cdot(2\pi)^3}c_R^{-1}rL^2$ and satisfying  \eqref{eq:K2}, where $r,L,c_R$ are constants as in Theorem~\ref{v1}. Then there exists a sequence $T_{n}\to\infty$ and
	a stationary  solution $((\tilde\Omega,\tilde{\mathcal{F}},\tilde{\mathbf{P}}),\tilde u,\tilde B)$ to \eqref{eul1} such that
	$$
	\frac{1}{T_{n}}\int_{0}^{T_{n}}\mathcal{L}[S_{t}(u,B)] \dif t\to \mathcal{L}[\tilde u,\tilde B]
	$$
	weakly in the sense of probability measures on $\mathcal{T}$ as $n\to\infty$.   Moreover, it holds true that
	\begin{align}\label{eq:s}
		\begin{aligned}
			\tilde{\mathbf{E}}\|\tilde u\|_{L^2}^2=K. \end{aligned}\end{align}
	\et	
	\begin{proof}
		
		First, we claim that there exist $\beta',\beta''>0$ such that for any $N\in\mathbb{N}$
		\begin{align}\label{claim 6.2}
			\sup_{s\in\mR}\E\|u(\cdot+s)\|^{2r}_{C^{\beta'}([-N,N], C^{\beta''}_x)}\lesssim N.
		\end{align}
		Indeed, reviewing the proof of Theorem~\ref{v1} and using interpolation, we deduce for some $\beta'\in(0,\frac{\vartheta}{2})$ and $\beta''\in(\frac{\vartheta}{2},\vartheta)$ satisfying $\beta'+\beta''<\vartheta<\min{\{\frac{\sigma}{120{\cdot b^5}},\frac{1}{21{\cdot}b^4}\}}$
		\begin{align*}
			\sum_{q\geq0}\$v_{q+1}-v_q\$_{C^{\beta'}_tC^{\beta''}_x,2r}&\lesssim \sum_{q\geq0}\$v_{q+1}-v_q\$_{C^\vartheta_{t,x},2r} 
			\\&\lesssim \sum_{q\geq0}\$v_{q+1}-v_q\$^{1-\vartheta}_{C^0_{t,x},2r} \$v_{q+1}-v_q\$^{\vartheta}_{C^1_{t,x},2r}
			\\ &\lesssim \sum_{q\geq0} \delta_{q+1}^{\frac{1-\vartheta}{2}} \lambda_{q+1}^{\frac{3}{2}\vartheta} \delta_{q+1}^{\frac{1}{2}\vartheta}
			\leq \sqrt{3r}La^{\frac32b\vartheta}+\lambda_2^{\beta}\sum_{q\geq1} \lambda_{q+1}^{\frac{3}{2}\vartheta-\beta} <\infty.
		\end{align*}
		Hence, we conclude that $v=\lim_{q\rightarrow\infty}v_q$ exists and lies in $L^{2r}(\Omega,C^{\beta'}(\R;C^{\beta''}))$. Similarly, using \eqref{zh5/2}, \eqref{estimate-zq}, \eqref{z-p} and interpolation
		we deduce for the same $\beta'\in(0,\frac{\vartheta}{2})$ and $\beta''\in(\frac{\vartheta}{2},\vartheta)$ satisfying $\beta'+\beta''<\vartheta<\min{\{\frac{\sigma}{120{\cdot b^5}},\frac{1}{21{\cdot} b^4}\}}$ and any $p\geq1$
		\begin{align*}
			&\sum_{q\geq0}\$z_{q+1}-z_q\$_{C^{\beta'}_tC^{\beta''}_x,p}
			\\\lesssim & \sum_{q\geq0}\$z_{q+1}-z_q\$^{(\frac14-\beta')(1-\beta^{''})}_{C^0_{t,x},p}\$z_{q+1}-z_q\$^{(\frac14-\beta')\beta^{''}}_{C^0_tC^1_x,p}\$z_{q+1}-z_q\$^{\beta'(1-\beta^{''})}_{C^{\frac14}_tC^0_x,p}\$z_{q+1}-z_q\$^{\beta'\beta^{''}}_{C^\frac14_tC^1_x,p}
			\\\lesssim &\sum_{q\geq0}(\lambda_{q+1}^{-\frac{\gamma\sigma}{8}}+ \lambda_{q+2}^{-\gamma} \lambda_{q+1}^{\frac{\gamma}{8}})^{(\frac14-\beta')(1-\beta^{''})}\lambda_{q+2}^{\frac{\gamma}{8}[(\frac14-\beta')\beta''+\beta'\beta'']}
			\\\lesssim &\sum_{q\geq0} \lambda_{q+1}^{-\frac{\gamma}{8}{\cdot}\frac{\sigma}{16}}\lambda_{q+2}^{\frac{\gamma}{32}\beta''}+\lambda_{q+2}^{-\frac{\gamma}{16}+\frac{\gamma}{32}\beta''}\lambda_{q+1}^{\frac{\gamma}{8}}\lesssim \sum_{q\geq0}\lambda_{q+1}^{-\frac{\gamma}{8}{\cdot}\frac{\sigma}{16}+\frac{\gamma}{32}b\beta''}+\lambda_{q+1}^{\frac{\gamma}{8}+\frac{\gamma}{32}b\beta''-\frac{\gamma}{16}b} <\infty,
		\end{align*}
		where we used $b\geq7$, $(\frac14-\beta')(1-\beta^{''})>\frac{1}{16}$ and $\beta''<\min{\{\frac{\sigma}{4b},1\}}$ in the last two inequalities.
		Hence we obtain
		  $\lim_{q\rightarrow\infty}z_q=z$ in $L^p(\Omega,C^{\beta'}(\R;C^{\beta''}))$ for any $p\geq 1$. Then letting $u=v+z$, we obtain an $(\mathcal{F}_t)_{t\in \R}$-adapted analytically weak solution $u$ to \eqref{eul1} and 
		\begin{align*}
			\$u\$_{C^{\beta'}_tC^{\beta''}_x,2r}<\infty.
		\end{align*}
	Furthermore, by $\|u\|_{C^{\beta'}_{[-N,N]}C^{\beta''}_x}\leq \sum_{i=-N}^{N-1}\|u\|_{C^{\beta'}_{[i,i+1]}C^{\beta''}_x}$ for any $N\in\mathbb{N}$ and Minkowski's inequality, we obtain
		\begin{align*}
		\sup_{s\in\mR}\left[\E\|u(\cdot+s)\|^{2r}_{C^{\beta'}_{[-N,N]}C^{\beta''}_x}\right]^{\frac{1}{2r}} &\leq\sup_{s\in\mR}\sum_{i=-N}^{N-1}\left[ \E\|u(\cdot+s)\|^{2r}_{C^{\beta'}_{[i,i+1]}C^{\beta''}_x}\right]^{\frac{1}{2r}} 
			\\ &\leq 2N\sup_{t\in \R}\left[ \E \|u\|^{2r}_{C^{\beta'}_{[t,t+1]}C^{\beta''}_x}\right]^{\frac{1}{2r}} =2N 	\$u\$_{C^{\beta'}_tC^{\beta''}_x,2r} \lesssim N,
		\end{align*}
	where the implicit constants are independent of $N$. Hence, assertion \eqref{claim 6.2} holds.
		
		Now, we define the ergodic averages of the solution $(u,B)$ as the probability measures on the trajectory space $\cT$
		\begin{align*}
			\nu_T=\frac1T\int_0^T\mathcal{L}[S_t(u,B)]\dif t,\qquad T\geq 0.
		\end{align*}
		For $R_{N}>0$, $N\in\mathbb{N}$, by Arzelà–Ascoli Theorem and the above claim, we have that the set
		\begin{align*}
			K_M:=\cap_{N=M}^\infty\bigg\{g;\,&\|g\|_{C^{\beta'}_{[-N,N]} C^{\beta''}_x}\leq R_N\bigg\}
		\end{align*}
		is relatively compact in $C(\mR;C^{\frac{\vartheta}{2}})$ since $\frac{\vartheta}{2}<\beta''$. We then deduce that the time shifts $S_tu$, $t\in\mR$,  are tight on $ C(\mR;C^{\frac{\vartheta}{2}})$. Since  $S_tB$ is a Wiener process for every $t\in\mR$, the law of  $S_tB$ is tight. Accordingly, for any $\eps>0$ there is a compact set $ F_\eps$ in $\cT$ such that
		$$\sup_{t\in\mR}\bP(S_{t} (u,B) \in F^c_\eps)<\eps.$$
		This implies
		\begin{align*}
			\nu_T (F_\eps^c) =& \frac{1}{T} \int_0^T \mathbf{P} (S_{t} (u,B) \in F^c_\eps) \dif t
			< \eps.
		\end{align*}
	Therefore, there is a weakly converging subsequence of the probability measures $\nu_{T}$, $T\geq0$. 
		The rest of the proof follows by Jakubowski--Skorokhod representation theorem and the same argument as in \cite[Theorem 4.1]{HZZ22b}.
	\end{proof}
	Using the above result and choosing different $K$, we conclude Theorem \ref{th:main}.
	
	\appendix
	\renewcommand{\appendixname}{Appendix~\arabic{section}}
	\renewcommand{\theequation}{A.\arabic{equation}}
	\section{Beltrami waves}\label{Bw2}
	In this part, we recall the Beltrami waves from \cite[Section 5.4]{BV19} which is adapted to the convex integration scheme in Proposition~\ref{p:iteration1}.
	We point out that the construction is entirely deterministic, meaning that none of the functions below depends on $\omega$. Let us begin with the definition of Beltrami waves.

	Given $\xi\in \mathbb{S}^2\cap \mathbb{Q}^3$, let $A_\xi \in \mathbb{S}^2\cap \mathbb{Q}^3$ obey 
	\begin{align*}
		A_\xi \cdot \xi =0, \quad A_{-\xi}=A_\xi.
	\end{align*}
	We define the complex vector
	\begin{align*}
		B_\xi=\frac{1}{\sqrt{2}}(A_\xi+i\xi \times A_\xi).
	\end{align*}
	By construction, the vector $B_\xi$ has the properties
	\begin{align*}
		|B_\xi|=1,\quad B_\xi \cdot \xi =0,\quad i\xi \times B_\xi=B_\xi,\quad B_{-\xi}=\overline{B_\xi}.
	\end{align*}
	This implies that for any $\lambda \in \mathbb{Z}$, such that $\lambda \xi \in \mathbb{Z}^3$, the function
	\begin{equation}
		W_{(\xi)}:=W_{\xi,\lambda}(x):=B_\xi e^{i\lambda \xi\cdot x}
	\end{equation}
	is $\mathbb{T}^3$ periodic, divergence-free, and is an eigenfunction of the $\mathrm{curl}$ operator with eigenvalue $\lambda$. That is, $W_{(\xi)}$
	is a complex Beltrami plane wave. The following lemma states a useful property for linear combinations of
	complex Beltrami plane waves, which can be found in  \cite[Proposition 5.5]{BV19}.
	\begin{lemma}\label{Belw11}
		Let $\Lambda$  be a given finite subset of $\mathbb{S}^2\cap \mathbb{Q}^3$ such that $-\Lambda=\Lambda$, and
		let $\lambda \in \mathbb{Z}$ be such that $\lambda \Lambda \subset \mathbb{Z} $. Then for any choice of coefficients $a_\xi \in \mathbb{C}$ with $\overline{a_\xi}=a_{-\xi}$ the vector field
		\begin{equation}
			W(x)=\sum_{\xi \in \Lambda}a_\xi B_\xi e^{i\lambda \xi \cdot x} 
		\end{equation}
		is a real-valued, divergence-free Beltrami vector field $\mathrm{curl}W=\lambda W$, and thus it is a stationary solution of
		the Euler equations
		$$\div (W\otimes W)=\nabla\frac{|W|^2}{2}.$$
		Furthermore, since $B_\xi \otimes B_{-\xi}+B_{-\xi}\otimes B_\xi=2Re (B_\xi \otimes B_{-\xi}) = \mathrm{Id}- \xi \otimes \xi$, we have
		\begin{equation}\label{A3}
			\frac{1}{(2\pi)^3}\int_{\mathbb{T}^3} W\otimes W\ dx=\frac{1}{2} \sum_{\xi \in \Lambda}|a_\xi|^2(\mathrm{Id}-\xi \otimes \xi).
		\end{equation}
	\end{lemma}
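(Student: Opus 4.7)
The proof will verify the five assertions (real-valuedness, divergence-freeness, Beltrami property, solving stationary Euler, and the integral identity) essentially by direct computation, exploiting the carefully engineered algebraic properties of $B_\xi$ listed before the lemma.

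First I would handle the three pointwise identities by reducing to a single plane wave $W_{(\xi)} = B_\xi e^{i\lambda\xi\cdot x}$ and then summing. Real-valuedness follows by pairing $\xi$ with $-\xi$: since $\overline{a_\xi} = a_{-\xi}$, $B_{-\xi} = \overline{B_\xi}$, and $e^{-i\lambda\xi\cdot x} = \overline{e^{i\lambda\xi\cdot x}}$, the $\xi$- and $(-\xi)$-summands are complex conjugates and their sum is real. For divergence-freeness, $\div W_{(\xi)} = i\lambda(B_\xi\cdot\xi)e^{i\lambda\xi\cdot x} = 0$ using $B_\xi\cdot\xi=0$. For the Beltrami property, $\mathrm{curl}\, W_{(\xi)} = i\lambda(\xi\times B_\xi)e^{i\lambda\xi\cdot x} = \lambda B_\xi e^{i\lambda\xi\cdot x} = \lambda W_{(\xi)}$ using $i\xi\times B_\xi = B_\xi$. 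Summing over $\xi\in\Lambda$ gives $\div W = 0$ and $\mathrm{curl}\,W=\lambda W$.

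Next, to show $W$ solves the stationary Euler equation, I would use the standard vector identity $(W\cdot\nabla)W = \nabla\tfrac{|W|^2}{2} - W\times \mathrm{curl}\,W$. Since $\mathrm{curl}\,W = \lambda W$, the cross product $W\times\mathrm{curl}\,W = \lambda\,W\times W = 0$, so $(W\cdot\nabla)W = \nabla\tfrac{|W|^2}{2}$. Combined with $\div W=0$ this gives $\div(W\otimes W) = (W\cdot\nabla)W = \nabla\tfrac{|W|^2}{2}$ as required.

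Finally, for the integral identity \eqref{A3}, I would expand
\begin{equation*}
\int_{\mathbb{T}^3} W\otimes W\,dx = \sum_{\xi,\xi'\in\Lambda} a_\xi a_{\xi'}\,B_\xi\otimes B_{\xi'}\int_{\mathbb{T}^3} e^{i\lambda(\xi+\xi')\cdot x}\,dx,
\end{equation*}
and observe that the integral vanishes unless $\xi' = -\xi$ (here the assumption $\lambda\Lambda\subset\mathbb{Z}^3$ is used so that the exponential is $\mathbb{T}^3$-periodic and nonconstant modes integrate to zero), in which case it equals $(2\pi)^3$. Hence the double sum reduces to $(2\pi)^3\sum_{\xi\in\Lambda}|a_\xi|^2\,B_\xi\otimes B_{-\xi}$ using $a_\xi a_{-\xi} = |a_\xi|^2$. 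Symmetrizing over $\xi\leftrightarrow-\xi$ (legitimate since $-\Lambda=\Lambda$ and $|a_\xi|^2 = |a_{-\xi}|^2$) and applying the stated identity $B_\xi\otimes B_{-\xi} + B_{-\xi}\otimes B_\xi = \mathrm{Id}-\xi\otimes\xi$ yields \eqref{A3}.

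There is no serious obstacle; the proof is a sequence of short algebraic verifications. The only subtle point worth being careful with is the symmetrization in the last step — one must check that writing $\sum_\xi |a_\xi|^2 B_\xi\otimes B_{-\xi} = \tfrac12\sum_\xi |a_\xi|^2(B_\xi\otimes B_{-\xi} + B_{-\xi}\otimes B_\xi)$ is valid, which relies on the symmetry $-\Lambda=\Lambda$ and the reality constraint on the coefficients. All other computations are immediate consequences of the orthogonality relations $B_\xi\cdot\xi=0$, $i\xi\times B_\xi=B_\xi$, $|B_\xi|=1$, and $B_{-\xi}=\overline{B_\xi}$ baked into the definition of the Beltrami building blocks.
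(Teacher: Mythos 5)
Your proof is correct and is precisely the standard verification-by-computation argument. The paper does not give its own proof of this lemma; it simply recalls it from \cite{BV19} (Proposition 5.5 there), so there is no alternative paper proof to compare against. Your handling of the real-valuedness via the $\xi\leftrightarrow-\xi$ pairing, the divergence/curl computations from the algebraic properties of $B_\xi$, the passage from the Beltrami property to stationary Euler via the identity $(W\cdot\nabla)W=\nabla\tfrac{|W|^2}{2}-W\times\mathrm{curl}\,W$, the Fourier orthogonality argument for the integral, and the final symmetrization using $-\Lambda=\Lambda$ and $a_\xi a_{-\xi}=|a_\xi|^2$ are all correct and match the argument given in the source.
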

	The key point of the construction is that the abundance of Beltrami waves allows us to find several such flows $v$ with the property that
	\begin{align*}
		\frac{1}{(2\pi)^3}\int_{\mathbb{T}^3} v\otimes v(t,x) \dif x
	\end{align*}
	equals a prescribed symmetric matrix $R$. Indeed we will need to select these flows so as to depend smoothly on the matrix $R$, at least when $R$ belongs to a neighborhood of the identity matrix. In view of \eqref{A3}, such selection is made possible by the following lemma from \cite[Proposition 5.6]{BV19}.
	\begin{lemma}\label{Belw22}(Geometric lemma)
		There exists a sufficiently small $c_*>0$ with the following property. Let $B_{c_*(\mathrm{Id})}$ denote the closed ball of symmetric $3\times 3$ matrices, centered at $\mathrm{Id}$, of radius $c_*$. Then, there
		exist pairwise disjoint subsets $$\Lambda_\alpha \subset \mathbb{S}^2\cap \mathbb{Q}^3, \qquad \alpha\in\{0,1\},$$
		and smooth positive functions 
		$$\gamma_\xi^{(\alpha)}\in C^\infty(B_{c_*}(\mathrm{Id})), \qquad \alpha\in\{0,1\},\  \xi \in \Lambda_\alpha,$$
		such that the following hold. For every $\xi \in \Lambda_\alpha$ we have $-\xi \in \Lambda_\alpha$ and $\gamma_\xi^{(\alpha)}=\gamma_{-\xi}^{(\alpha)}$. For each $R\in B_{c_*}(\mathrm{Id})$ we have the identity
		\begin{equation}
			R=\frac{1}{2}\sum_{\xi \in \Lambda_\alpha}\left(\gamma_\xi^{(\alpha)}(R)\right)^2(\mathrm{Id}-\xi \otimes \xi).
		\end{equation}
		We label by $n_*$ the smallest natural number such that $n_*\Lambda_\alpha \subset \mathbb{Z}^3$ for all $\alpha \in \{0,1\}$.
	\end{lemma}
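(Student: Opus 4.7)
The plan is to combine a concrete choice of symmetric rational directions on $\mathbb{S}^2$ with the implicit function theorem. The underlying linear-algebraic fact is that the space $\mathrm{Sym}(3)$ of symmetric $3\times 3$ matrices is $6$-dimensional and, because $\int_{\mathbb{S}^2}(\mathrm{Id}-\xi\otimes\xi)\,d\sigma(\xi)=\tfrac{8\pi}{3}\mathrm{Id}$ while $\{\xi\otimes\xi\}_{\xi\in\mathbb{S}^2}$ spans $\mathrm{Sym}(3)$, the family $\{\mathrm{Id}-\xi\otimes\xi\}_{\xi\in\mathbb{S}^2}$ spans $\mathrm{Sym}(3)$ as well. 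Thus one can hope to find six pairs of rational unit vectors whose associated matrices form a basis, and to decompose $\mathrm{Id}$ along this basis with strictly positive coefficients.

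First I would fix six rational pairs $\{\pm\xi_i\}_{i=1}^{6}\subset\mathbb{S}^2\cap\mathbb{Q}^3$, built from Pythagorean-type points such as permutations and sign changes of $(3,4,0)/5$ together with a second, less aligned family (for instance $(1,2,2)/3$ and its analogues) in order to have enough angular spread. By an explicit finite linear-algebra verification one checks that the six matrices $M_i:=\mathrm{Id}-\xi_i\otimes\xi_i$ are linearly independent in $\mathrm{Sym}(3)$, and that the $6\times 6$ linear system $\sum_{i=1}^{6} t_i M_i=\mathrm{Id}$ admits a unique solution $\bar t=(\bar t_1,\dots,\bar t_6)$ with $\bar t_i>0$. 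This provides the base point of the implicit function argument. Introducing
$$
\Gamma:(0,\infty)^{6}\longrightarrow\mathrm{Sym}(3),\qquad \Gamma(t_1,\dots,t_6)=\sum_{i=1}^{6} t_i M_i,
$$
whose differential at $\bar t$ is the invertible linear map $(s_i)\mapsto\sum_i s_iM_i$, the implicit function theorem produces a radius $c_*>0$ and a smooth map $R\mapsto t(R)=(t_1(R),\dots,t_6(R))$ on $B_{c_*}(\mathrm{Id})$ with $\Gamma(t(R))=R$ and $t_i(R)>0$; the radius is shrunk if necessary to retain positivity. Setting $\Lambda_0:=\{\pm\xi_i:1\le i\le 6\}$ and $\gamma^{(0)}_{\pm\xi_i}(R):=\sqrt{t_i(R)}$ yields smooth positive functions that are even in $\xi$; since each unsigned pair contributes twice in the sum over $\Lambda_0$, the prefactor $\tfrac12$ in the statement exactly reproduces $R=\tfrac12\sum_{\xi\in\Lambda_0}(\gamma^{(0)}_\xi(R))^2(\mathrm{Id}-\xi\otimes\xi)$.

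For the second family, I would choose a rational orthogonal matrix $Q\in SO(3)\cap GL_3(\mathbb{Q})$ (e.g.\ a rotation in a coordinate plane by a Pythagorean angle) so that $\Lambda_1:=Q\Lambda_0$ is disjoint from $\Lambda_0$ while remaining a symmetric subset of $\mathbb{S}^2\cap\mathbb{Q}^3$. The functions $\gamma^{(1)}_\xi(R):=\gamma^{(0)}_{Q^{\top}\xi}(Q^{\top}RQ)$ then satisfy the analogous identity on a possibly smaller neighborhood of $\mathrm{Id}$, because $Q(\mathrm{Id}-\eta\otimes\eta)Q^{\top}=\mathrm{Id}-(Q\eta)\otimes(Q\eta)$ and $Q\mathrm{Id} Q^{\top}=\mathrm{Id}$. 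Finally, $n_*$ is defined as the least common multiple of the denominators appearing in the rational coordinates of all vectors of $\Lambda_0\cup\Lambda_1$, which ensures $n_*\Lambda_\alpha\subset\mathbb{Z}^3$ for $\alpha\in\{0,1\}$.

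The main obstacle I expect is the concrete combinatorial step: exhibiting $\Lambda_0$ and a rational $Q$ for which the linear-independence and strict-positivity hypotheses hold at $\mathrm{Id}$ and simultaneously $\Lambda_0\cap Q\Lambda_0=\emptyset$. The remaining pieces---invoking the implicit function theorem, preserving positivity by continuity, enforcing evenness in $\xi$, and identifying $n_*$---are then routine. This is, in essence, the classical geometric lemma underlying the convex integration constructions of \cite{DelSze13,BV19}, from which the statement could alternatively be quoted directly.
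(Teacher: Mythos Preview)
The paper does not prove this lemma at all: it is stated in Appendix~A as a result recalled from \cite[Section~5.4]{BV19} (ultimately going back to \cite{DelSze13}), with no argument given. Your sketch therefore goes well beyond what the paper does, and it follows precisely the standard route used in those original references: exhibit six antipodal pairs in $\mathbb{S}^2\cap\mathbb{Q}^3$ whose associated rank-two matrices $\mathrm{Id}-\xi\otimes\xi$ form a basis of $\mathrm{Sym}(3)$ with $\mathrm{Id}$ lying in the interior of the positive cone they generate, apply the implicit function theorem near $\mathrm{Id}$, and obtain the second family by a rational rotation. The conjugation argument for $\Lambda_1$ is correct since $\|Q^\top RQ-\mathrm{Id}\|=\|R-\mathrm{Id}\|$ for orthogonal $Q$, so no further shrinking of $c_*$ is needed on that account. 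The only genuinely nontrivial point you flag---the explicit choice of six rational directions and a rational $Q$ with the required linear independence, positivity, and disjointness---is exactly where the work lies in the cited references, and your candidate families (permutations of $(3,4,0)/5$ and $(1,2,2)/3$) are of the right type; completing that finite check would finish the proof.
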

	Similar as in \cite[(5.17)]{BV19}, it is sufficient to consider index sets $\Lambda_0$ and $\Lambda_1$ in Lemma~\ref{Belw22} to have 12 elements. Moreover, by abuse of notation, for $j\in \mathbb{Z}$ we denote $\Lambda_j=\Lambda_{j \ \mathrm{mod} \ 2}$. In Section~\ref{s:313}, we write $\gamma^{(\alpha)}_\xi$ as $\gamma^{(j)}_\xi$. Also, it is convenient to denote $M$ geometric constant such that
	\begin{equation}\label{A5}
		\sum_{\xi\in\Lambda_\alpha} \|\gamma^{(\alpha)}_\xi\|_{C^n(B_{c_*}(\mathrm{Id}))}\leq M
	\end{equation}
	holds for $n$ large enough, $\alpha\in\{0,1\}$ and $\xi\in\Lambda_\alpha$. This parameter is universal.

	\renewcommand{\theequation}{B.\arabic{equation}}
	\section{Proof of Proposition~\ref{51}}\label{ap:A'}  
  We concisely present the proof of Proposition~\ref{51}, following closely the calculation of \cite[Appendix B]{HZZ22b}. We only write the necessary changes compared to \cite[Appendix B]{HZZ22b}.
		By the chain rule, we have
		\begin{equation}\label{aa'}
			\|a_{(\xi)}\|_{C^{N}_{t,x}}\lesssim\sum_{m=0}^N\|\rho^{\frac12}\|_{C^m_{t,x}}
			\big\|\gamma_{\xi}\big(\Id
			-\rho^{-1}  \mathring{R}_\ell \big)\big\|_{C^{N-m}_{t,x}}.
		\end{equation}
		We estimate the two terms on the right hand side in \eqref{aa'} separately. First, using Lemma~\ref{Belw22} and \eqref{eq:rho3'} we have
		\begin{align*}
			\|\rho^{\frac12}\|_{C^0_{t,x}}\lesssim \|\mathring{R}_q\|_{C^0_{[t-1,t+1],x}}^{\frac12}+\ell^{\frac12}+\delta_{q+1}^{\frac12}\bar{e}^{\frac12}, \qquad
			\|\gamma_{\xi} (\Id
			-\rho^{-1} \mathring{R}_\ell )\|_{C^0_{t,x}}\lesssim 1.
	\end{align*}
		Second, we use \cite[(B.4)]{HZZ22b} to derive for $m\geq1$
	\begin{equation}\label{eq:rho12'}
		\aligned		
		\|\rho^{1 / 2} \|_{C^m_{t,x}}
		\lesssim  \| \rho^{1 / 2} \|_{C^0_{t,x}}+\ell^{- 1 / 2} \|\rho\|_{C^{m}_{t,x}} + \ell^{1 / 2 -m} \|\rho\|_{C^{1}_{t,x}}^m.
		\endaligned
	\end{equation}
	By the definition of $\rho$ in \eqref{eq:rho3'},
it follows from \cite[(B.1)]{HZZ22b} that for any $m\geq 1$
\begin{equation}\label{rhoN1'}
	\aligned
	\|\rho\|_{C^m_{t,x}} \lesssim \ell+\|\mathring{R}_q\|_{C^0_{[t-1,t+1],x}}+ \ell^{-m}\|\mathring{R}_q\|_{C^0_{[t-1,t+1],x}}+\ell^{-2m+1}\|\mathring{R}_q
	\|_{C^0_{[t-1,t+1],x}}^m+\ell^{-m}\delta_{q+1}\bar{e}.
	\endaligned
\end{equation}
 Hence, combining \eqref{eq:rho12'} with \eqref{rhoN1'}, we have for any $m\geq 1$
      \begin{equation}\label{rho 12}
      	\aligned		
      	\|\rho^{1 / 2} \|_{C^m_{t,x}}
      \lesssim  \ell^{\frac{1}{2}-2m}(1+\|\mathring{R}_q\|_{C^0_{[t-1,t+1],x}})^m.
      	\endaligned
      \end{equation}

   We now consider the second term on the right hand side of \eqref{aa'}. By \cite[(B.5)]{HZZ22b}, we have  for $m=0,\dots,N-1$
		\begin{equation}\label{gamma'}
			\left\|\gamma_{\xi}\left(\Id
			-\rho^{-1}\mathring{R}_\ell\right)\right\|_{C^{N-m}_{t,x}}\lesssim	 \left\| \rho^{-1} \nabla_{t,x}\mathring{R}_\ell
			\right\|_{C^0_{t,x}}^{N - m} + \left\| \rho^{-2} \mathring{R}_\ell
			\right\|_{C^0_{t,x}}^{N - m} \|\rho \|_{C^1_{t,x}}^{N - m}+\left\| \rho^{-1}\mathring{R}_\ell \right\|_{C^{N - m}_{t,x}} .
		\end{equation}
		Using $\rho\geq \ell$, $| \rho^{-1} \mathring{R}_\ell|\leq 1$, \eqref{rhoN1'} and  \cite[(B.6)]{HZZ22b}, we deduce that for any $N-m\geq 1$ 
		\begin{align}\label{esti second B1}
		\left\|\gamma_{\xi}\left(\Id
		-\rho^{-1}\mathring{R}_\ell \right)\right\|_{C^{N-m}_{t,x}}\lesssim \ell^{-2(N-m)-1}(\|\mathring{R}_q\|_{C_{[t-1,t+1],x}^0}+1)^{N-m+1}.
	\end{align}
	     Then combining \eqref{esti second B1} with the bound \eqref{rho 12} and plugging into \eqref{aa'} yields for $N\geq 2$
	     \begin{equation}\label{estimate aN}
	     	\begin{aligned}
	     		\|a_{(\xi)}\|_{C^N_{t,x}}& \lesssim \ell^{-2N-\frac{1}{2}}(1+\|\mathring{R}_q\|_{C_{[t-1,t+1],x}^0})^{N+1}
	     		\lesssim \ell^{-2N-\frac{1}{2}}\lambda_{q+3}^{(N+1)\gamma},
	     	\end{aligned}
	     \end{equation}
	     where we used inductive assumption \eqref{vqdd} for $\mathring{R}_q$ in the last inequality. 
        
Moreover, by a similar calculation as above, we also have
     	\begin{align}
     		\|a_{(\xi)}\|_{C^1_{t,x}}& \lesssim \ell^{-2}(1+\|\mathring{R}_q\|_{C^0_{[t-1,t+1],x}})^{\frac32}\leq \ell^{-2}\lambda_{q+3}^{2\gamma}, \label{estimate a1}
     		\\ 	\|a_{(\xi)}\|_{C^0_{t,x}}&\lesssim
     		\|\mathring{R}_q\|^{\frac12}_{C_{[t-1,t+1],x}^0} +\ell^{\frac12}+ \delta_{q+1}^{\frac12}\leq \lambda_{q+3}^{\gamma}. \label{estimate a0}
     	\end{align}
     	Hence, \eqref{axiN} and \eqref{axi012} follow from the estimates \eqref{estimate aN}, \eqref{estimate a1} and \eqref{estimate a0}.

	\renewcommand{\theequation}{C.\arabic{equation}}
	\section{Estimates for transport equations}	\label{s:B}
	We provide a detailed estimate of the solutions to the transport equations in Section~\ref{313'}. First, we consider the following transport equation on $[t_0,T],t_0\geq0$:
	\begin{equation}\label{eq:phi}
		\aligned
		(\partial_t+v\cdot\nabla)f&=g ,
		\\ f(t_0,x)&=f_0.
		\endaligned
	\end{equation}
	We recall the following results from \cite[Proposition D.1, (133), (134)]{BDLIS16}:
	\begin{equ}\label{eq:Gronwall}
		\|f(t)\|_{C_x^1}\leq\|f_0\|_{C_x^1}e^{(t-t_0)\|{v}\|_{C^0_{[t_0,t]}C^1_x}}+\int_{t_0}^{t}e^{(t-\tau)\|{v}\|_{C_{[t_0,t]}^0C_x^1}}\|{g(\tau)}\|_{C_x^1} \dif \tau.
	\end{equ}
	More generally, for any $N\geq2$, there exists a constant $C=C_N$ such that
	\begin{equation}\label{eq:N}
		\aligned
			\|f(t)\|_{C_x^N}\leq &\left(\|f_0\|_{C_x^N}+C(t-t_0)\|{v}\|_{C^0_{[t_0,t]}C^N_x} \|f_0\|_{C_x^1} \right) e^{C(t-t_0)\|{v}\|_{C^0_{[t_0,t]}C^1_x}}
			\\ &+\int_{t_0}^{t} e^{C(t-\tau)\|{v}\|_{C^0_{[t_0,t]}C^1_x}}\left( \|g(\tau)\|_{C_x^N}+C(t-\tau)\|{v}\|_{C^0_{[t_0,t]}C^N_x} \|g(\tau)\|_{C_x^1} \right) \dif \tau.
		\endaligned
	\end{equation}
	Consider the following case: 
	\begin{equation*}
		\aligned
		(\partial_t+v\cdot\nabla)\Phi&=0,
		\\ \Phi(t_0,x)&=x.
		\endaligned
	\end{equation*}
	We aim to estimate the $C_x^0$-norm of $\nabla\Phi(t)-\Id$. First, let $\Psi(s,x)=\Phi(s,x)-x$, then $\Psi$ satisfies the  following equation:
	\begin{equation*}
		\aligned
		(\partial_t+v\cdot\nabla)\Psi=-v,
		\\\Psi(t_0,x)=0.
		\endaligned
	\end{equation*}
	We then apply \eqref{eq:Gronwall} to conclude 
	\begin{equation}\label{d3}
		\|\nabla\Phi(t)-\Id\|_{C_x^0}=\|\Psi(t)\|_{C_x^1}\leq\int_{t_0}^{t}e^{(t-\tau)\|{v}\|_{C_{[t_0,t]}^0C_x^1}}\|{v}\|_{C^0_{[t_0,t]}C_x^1} \dif \tau=e^{(t-t_0)\Vert{v}\Vert_{C_{[t_0,t]}^0C_x^1}}-1.
	\end{equation}
	
	Now we use the above result to estimate the solution $\Phi_{k,j}$ of \eqref{eq:te}, $j\in \{0,1,...,\lceil \ell^{-1} \rceil\}$. Using \eqref{vqcc} and the definition of $z_q$ in \eqref{def z}, we obtain for any $k\in \mathbb{Z}$
	\begin{align}\label{d4}
		\ell\|{v_\ell+z_\ell\|_{C^0_{[k,k+1]}C_x^1}}\leq\ell(\lambda_q^{\frac{3}{2}}\delta_q^{\frac{1}{2}}+\lambda_{q+3}^{\frac{\gamma}{4}})\lesssim \lambda_{q}^{-\frac{1}{2}}\ll1,
	\end{align}
	where we used $b^3\gamma<1$ in the second inequality.
	Since $e^x-1\leq2x$ for $x\in[0,1]$ and combining \eqref{d3} with \eqref{d4}, we obtain 
	\begin{equation}\label{eq:a6}
		\aligned
		\sup_{t\in[k+(j-1)\ell,k+(j+1)\ell]}\|\nabla\Phi_{k,j}(t)-\Id\|_{C_x^0}&\leq e^{2\ell\|v_\ell+z_\ell\|_{C^0_{[k,k+1]}C_x^1}}-1 
		\\ &\lesssim \ell\|{v_\ell+z_\ell\|_{C^0_{[k,k+1]}C_x^1}}\lesssim \lambda_{q}^{-\frac{1}{2}}\ll1,
		\endaligned
	\end{equation}
	hence \eqref{Phija} holds. Then, \eqref{Phijb} follows from \eqref{eq:a6}, we have
	\begin{equation}\label{eq:Phi0}
		\aligned
		\sup_{t\in[k+(j-1)\ell,k+(j+1)\ell]} \| \nabla\Phi_{k,j}(t)\|_{C_x^0}\leq  \sup_{t\in[k+(j-1)\ell,k+(j+1)\ell]} \| \nabla\Phi_{k,j}(t)-\mathrm{Id}\|_{C_x^0}+1\leq 2,
		\\   \sup_{t\in[k+(j-1)\ell,k+(j+1)\ell]} \| \nabla\Phi_{k,j}(t)\|_{C_x^0}  \geq 1- \sup_{t\in[k+(j-1)\ell,k+(j+1)\ell]} \| \nabla\Phi_{k,j}(t)-\mathrm{Id}\|_{C_x^0}\geq \frac12.
		\endaligned
	\end{equation}
	
	Similarly, we use \eqref{eq:N} and \eqref{d4} again to obtain for any $n\geq 1$
	\begin{equation}\label{eq:Phin}
		\aligned
		\sup_{t\in[k+(j-1)\ell,k+(j+1)\ell]} \|\nabla \Phi_{k,j}(t)\|_{C_x^n}&\lesssim \ell \|{v_\ell+z_\ell\|_{C^0_{[k,k+1]}C_x^{n+1}}}e^{2\ell\|{v_\ell+z_\ell\|_{C^0_{[k,k+1]}C_x^1}}}
		\\&\lesssim  \ell^{-(n-1)}\|v_\ell+z_\ell\|_{C^0_{[k,k+1]}C_x^1}\lesssim \ell^{-n+\frac{1}{4}},
		\endaligned
	\end{equation}
	where we used $b^3\gamma<1$ in the last inequality. Using \eqref{vqaa}, \eqref{eq:te}, \eqref{zq0} and \eqref{eq:Phi0}, we obtain 
	\begin{equation}\label{E1}
		\aligned
		\sup_{t\in[k+(j-1)\ell,k+(j+1)\ell]}\|\partial_t\Phi_{k,j}\|_{C_x^0}\leq  \sup_{t\in[k+(j-1)\ell,k+(j+1)\ell]} \|(v_\ell+z_\ell)\cdot\nabla\Phi_{k,j}\|_{C_x^0}
		\leq \lambda_{q+3}^{\frac{\gamma}{2}}.
		\endaligned
	\end{equation}
	Differentiating both sides of \eqref{eq:te} and using \eqref{vqaa}, \eqref{vqcc}, \eqref{zq0}, \eqref{eq:Phi0} and \eqref{eq:Phin}, we also obtain

	\begin{equation}\label{E2}
		\aligned
		\sup_{t\in[k+(j-1)\ell,k+(j+1)\ell]}\|\partial_t\nabla\Phi_{k,j}\|_{C_x^0}
		 \lesssim& \sup_{t\in[k+(j-1)\ell,k+(j+1)\ell]}\|\nabla(v_\ell+z_\ell)\cdot \nabla\Phi_{k,j} \|_{C_x^0}		
		\\&+\sup_{t\in[k+(j-1)\ell,k+(j+1)\ell]}\|(v_\ell+z_\ell)\cdot \nabla^2\Phi_{k,j}\|_{C_x^0}		
		\\ \lesssim& \lambda_q^{\frac{3}{2}}\delta_q^{\frac{1}{2}}+\lambda_{q+3}^{\frac{\gamma}{4}}+(\lambda_{q+3}^{\frac{\gamma}{4}}+\lambda_{q+2}^{\gamma})\ell^{-\frac{3}{4}}\leq \lambda_q^{\frac{3}{2}}\lambda_{q+3}^{\gamma}.
		\endaligned
	\end{equation}
	The estimates \eqref{eq:Phin}--\eqref{E2} are used in Section~\ref{sss:v} and Section~\ref{sss:R} to estimate the $C^1_{t,x}$-norm of $w_{q+1}$ and the $C^0_{t,x}$-norm of $\mathring{R}_{q+1}$.

	\def\cprime{$'$} \def\ocirc#1{\ifmmode\setbox0=\hbox{$#1$}\dimen0=\ht0
		\advance\dimen0 by1pt\rlap{\hbox to\wd0{\hss\raise\dimen0
				\hbox{\hskip.2em$\scriptscriptstyle\circ$}\hss}}#1\else {\accent"17 #1}\fi}
	

\begin{thebibliography}{BDLSVMMM88}
		
		
		\bibitem[BDLIS15]{BDLIS16}
		T. Buckmaster, C. De~Lellis, P. Isett,  L.
		Sz\'{e}kelyhidi, Jr.
		\newblock Anomalous dissipation for {$1/5$}-{H}\"{o}lder {E}uler flows.
		{\em Ann. of Math. (2)}, 182(1):127--172, 2015.
		
		
		\bibitem[BDLSV19]{BDLSV19}
		T. Buckmaster, C. De Lellis, L. Sz\'ekelyhidi Jr, V. Vicol.
		\newblock Onsager’s conjecture for admissible weak solutions. 
		{\em Comm. Pure Appl. Math.}, 72(2):227–274, 2019.
		
		\bibitem[BFHM19]{BFHM19} D. Breit, E. Feireisl, M. Hofmanov\'{a}, B. Maslowski. \newblock{Stationary solutions to the compressible Navier--Stokes system driven by stochastic forcing.} \newblock{\em Probab. Theory Related Fields,} 174, 981--1032, 2019.
		
		\bibitem[BFH20]{BFH20} D. Breit, E. Feireisl, M. Hofmanov\'{a}. \newblock{On solvability and ill-posedness of the compressible Euler system subject to stochastic forces.} \newblock{\em Anal. PDE.} 13, no. 2, 371–402, 2020.
		
		
		
		\bibitem[BFH24]{BFH20e}
		D. Breit, E. Feireisl, M. Hofmanov\'a. On the long time behavior of compressible fluid flows excited by random forcing. {\em  Ann. Inst. H. Poincar\'{e} C Anal. Non Lin\'{e}aire,} pp. 1–33, 2024.
		
		\bibitem[BFM16]{BFM16}		
		\newblock
		Z. Brzez\'niak, F. Flandoli, M. Maurelli. Existence and uniqueness for stochastic 2D Euler flows with bounded vorticity. 
		\newblock{\em Arch. Ration. Mech. Anal.}, 221, no. 1, 107–142,  2016.
		
		\bibitem[BP01]{BP01}
		Z. Brzez\'niak, S. Peszat.
		\newblock Stochastic two dimensional Euler equations.
		\newblock{\em Ann. Probab.} no.4, 1796–1832, 2001.
		
		\bibitem[BV19]{BV19}
		T. Buckmaster, V. Vicol.
		\newblock Convex integration and phenomenologies in turbulence.
		\newblock {\em EMS Surv. Math. Sci.}, 6(1-2):173--263, 2019.
		
		\bibitem[CDZ24]{CDZ22}
		W. Chen, Z. Dong, X. Zhu.
		\newblock Sharp Nonuniqueness of Solutions to Stochastic Navier–Stokes Equations.
		\newblock{\em SIAM J. Math. Anal.} 56 (2), 2248-2285, 2024
		
		
		\bibitem[CFF21]{CFF19} 
		E. Chiodaroli, E. Feireisl, F. Flandoli.
		\newblock Ill posedness for the full euler system driven by multi- plicative white noise.
		\newblock{\em Indiana Univ. Math. J.} 70, 1267–1282, 2021.
		
		\bibitem[CFH19]{CFH19}
		D. Crisan, F. Flandoli, D. D. Holm.
		\newblock Solution properties of a 3D stochastic Euler fluid equation. 
		\newblock{\em J. Nonlinear Sci. 29 }, no. 3, 813–870, 2019.
		
		
		\bibitem[CL22]{CL22}
		A. Cheskidov, X. Luo.
		\newblock Sharp nonuniqueness for the Navier-Stokes equations.
		\newblock {\em Inventiones mathematicae} : 1-68, 2022.
		
		\bibitem[DS17]{DS17}
		S. Daneri, L. Sz\'ekelyhidi, Jr.
		\newblock Non-uniqueness and h-Principle for H\"older-Continuous Weak Solutions of the Euler Equations.
		{\em Arch. Rational Mech. Anal.} 224, 471--514, 2017.
		
		\bibitem[DLS10]{DLS10}
		C.~De~Lellis and L.~Sz{\'e}kelyhidi, Jr.
		\newblock On admissibility criteria for weak solutions of the Euler equations.
		{\em Arch. Rational Mech. Anal.}, 195(1):225–260, 2010.
		
		\bibitem[DLS12]{DLS12}
		C.~De~Lellis and L.~Sz{\'e}kelyhidi, Jr.
		\newblock The h-principle and the equations of fluid dynamics. 
		{\em Bull. Amer. Math. Soc. (N.S.)}, 49(3):347–375, 2012.
		
		\bibitem[DLS13]{DelSze13}
		C.~De~Lellis and L.~Sz{\'e}kelyhidi, Jr.
		\newblock Dissipative continuous {E}uler flows.
		\newblock {\em Invent. Math.}, 193(2):377--407, 2013.
		
		\bibitem[DPZ92]{DPZ92} G. Da Prato, J. Zabczyk. Stochastic Equations in Infinite Dimensions, Encyclopedia of Mathematics and its Applications, vol. 44, Cambridge University Press, Cambridge, 1992.
		
		\bibitem[FFH21]{FFH21}
		F.  Fanelli, E. Feireisl, M. Hofmanov\'a.
		Ergodic theory for energetically open compressible fluid flows,
		{\em Physica D: Nonlinear Phenomena},
		423,
		2021.
		
		\bibitem[GHV14]{GV!4}
		N.E. Glatt-Holtz, V. Vicol.
		\newblock{Local and global existence of smooth solutions for the stochastic Euler equations with multiplicative noise. }
		\newblock{\em Ann. Probab. 42}, no. 1, 80–145, 2014.
		
		\bibitem[HLP24]{HLP22}
		M. Hofmanov{\'a}, T. Lange, U. Pappalettera. 
		\newblock Global existence and non-uniqueness of 3D Euler equations
		perturbed by transport noise.
		{\em Probab. Theory and Related Fields}, 188: 1183–1255, 2024.
		
		
		\bibitem[HZZ22a]{HZZ22a}
		M. Hofmanov{\'a}, R. Zhu,  X. Zhu.
		\newblock On ill- and well-posedness of dissipative martingale solutions to
		stochastic 3{D} {E}uler equations.
		{\em Comm. Pure Appl Math.}, 75(11): 2446-2510, 2022.
		
		\bibitem[HZZ22b]{HZZ22b}
		M. Hofmanov{\'a}, R. Zhu, X. Zhu.
		\newblock Non-unique ergodicity for deterministic and stochastic 3D Navier–Stokes equations and euler equations. 
		\newblock {\em
			arXiv:2208.08290}, 2022.
		
		\bibitem[HZZ23a]{HZZ21}
		M. Hofmanov{\'a}, R. Zhu, X. Zhu.
		\newblock Global existence and non-uniqueness for 3D Navier-Stokes equations
		with space-time white noise.
		\newblock {\em Archive for Rational Mechanics and Analysis}, 247, Paper No. 46, 2023.
		
		\bibitem[HZZ23b]{HZZ21markov}
		M. Hofmanov{\'a}, R. Zhu, X. Zhu.
		\newblock Global-in-time probabilistically strong and Markov solutions to stochastic
		3D Navier--Stokes equations: existence and non-uniqueness.
		\newblock {\em The Annals of probability},Vol. 51, No. 2,
		524-579, 2023.
		
		\bibitem[HZZ24]{HZZ19}
		M. Hofmanov{\'a}, R. Zhu, X. Zhu.
		\newblock Non-uniqueness in law of stochastic 3D Navier--Stokes
		equations.
		\newblock {\em J. Eur. Math. Soc.} 26 , no.1, pp. 163–260, 2024.
		
		\bibitem[Ise18]{Ise18}
		P. Isett. \newblock A proof of Onsager’s conjecture. {\em Ann. of Math.}, 188(3):871–963, 2018.
		
		
		\bibitem[LR15]{LR15} W. Liu, M.~R\"ockner. Stochastic Partial Differential Equations: An Introduction. Springer, Berlin, 2015.
		
		\bibitem[L\"{u}24]{LZ24} L. L\"{u}. H\"{o}lder continuous solutions to stochastic 3D Euler equations via stochastic convex integration. {\em Preprint}, 2024.
		
	\end{thebibliography}
\end{document}